\documentclass[11pt,reqno]{amsart}
\usepackage{}
\usepackage{esint}
\usepackage{bbm}
\usepackage{bm}
\usepackage{amssymb}
\usepackage{mathrsfs}
\usepackage{amsfonts}
\usepackage{amsfonts,amssymb,amsmath,amsthm}
\usepackage{url}
\usepackage{enumerate}
\usepackage[pdftex,bookmarksnumbered]{hyperref}
\usepackage{graphicx,xcolor}
\usepackage{pgfplots}
\usepackage{float}
\usepackage{caption}
\usepackage{ulem}
\usepackage{amsmath}
\usepackage{mathtools}
\captionsetup{justification=centering, singlelinecheck=false}
\makeatletter 
\def\@captype{figure} 
\makeatother  

\usetikzlibrary{patterns,arrows,positioning,calc,fadings,shapes,decorations.markings}
\usepackage{tikz}

\usepackage{multicol}

\newcommand{\ds}{\displaystyle}
\newtheorem{theorem}{Theorem}[section]
\newtheorem{lemma}[theorem]{Lemma}

\newtheorem{corollary}[theorem]{Corollary}
\theoremstyle{definition}
\newtheorem{definition}[theorem]{Definition}
\newtheorem{remark}[theorem]{Remark}

\numberwithin{equation}{section}

\newtheorem{example}{Example}

\usepackage[left=2.0cm, right=2.0cm, top=2.0cm, bottom=2.0cm]{geometry}

\usepackage{graphicx}%
\usepackage{color}
\usepackage{cite}
\usepackage{hyperref}
\hypersetup{
	colorlinks = true,%
	citecolor = blue,
	filecolor=red,%
	linkcolor = [rgb]{0.65,0.0,0.0},%
	anchorcolor = red,
	pagecolor = red,
	urlcolor= [rgb]{0.65,0.0,0.0}
	linktocpage=true,
	pdfpagelabels=true,
	bookmarksnumbered=true,
}

\makeatother

\makeatletter
\@namedef{subjclassname@2020}{\textup{2020} Mathematics Subject Classification}
\makeatother

\DeclareMathOperator{\m}{\mathfrak{m}}
\DeclareMathOperator{\dm}{d\mathfrak{m}}

\DeclareMathOperator{\vol}{vol}

\DeclareMathOperator{\dd}{d}

\DeclareMathOperator{\supp}{supp}

\DeclareMathOperator{\dvol}{dvol}
\DeclareMathOperator{\dii}{div}

\DeclareMathOperator{\B}{\mathsf{B}}

\DeclareMathOperator{\FMMM}{\mathsf{FMMM}}

\newcommand{\Lip}{\mathsf{Lip}}
\newcommand{\dil}{\mathsf{dil}}

\urlstyle{sf}

\author{Benling Li}
\address{
School of Mathematics and Statistics\\
Ningbo University\\
315211 Ningbo, China}
\email{libenling@nbu.edu.cn}

\author{Wei Zhao}
\address{
School of Mathematics\\
East China University of Science and Technology\\
200237 Shanghai, China}
\email{szhao\underline{ }wei@yahoo.com}

\date{\today}

\keywords{Finsler manifold; Berwald's metric; Funk metric;  $S$-curvature; Sobolev space; Caffarelli--Kohn--Nirenberg inequality; uncertainty principle; Hardy inequality}

\thanks{B. Li is supported by Natural Science Foundation of Ningbo (No.~2024J017). W. Zhao
is supported by Natural Science Foundation of China (No.~12471045). }

\subjclass[2020]{Primary 26D10, Secondary 53C60, 53C23}
\begin{document}

\title[Unexpected Analytic Phenomena on Finsler Manifolds]{Unexpected Analytic Phenomena on Finsler Manifolds}

\begin{abstract}In the Riemannian setting, every flat Cartan--Hadamard manifold is isometric to Euclidean space, the canonical model that  underlies the theory of Sobolev spaces and guarantees  the sharpness/rigidity of the Hardy inequality, the uncertainty principle, and the Caffarelli--Kohn--Nirenberg (CKN) inequality.
In this paper, we show that on a flat Finsler Cartan--Hadamard manifold --- Berwald's metric space --- the classical picture alters radically: the Nash embedding theorem fails, the Sobolev space becomes nonlinear,  and the Hardy and uncertainty inequalities break down completely, whereas the CKN inequality exhibits a sharp threshold in its validity depending on a parameter. By contrast, on Funk metric spaces --- a class of Finsler Cartan--Hadamard manifolds of constant negative curvature --- this threshold behaviour disappears, although all the other non-Riemannian features persist. We trace this divergence to the lower bound of the $S$-curvature. As a consequence, the failure of the aforementioned functional inequalities is established for a broad class of Finsler manifolds that includes both Berwald's and the Funk metric spaces.

\end{abstract}
\maketitle

\section{Introduction}
In 1996, S. S. Chern remarked that Finsler geometry may be regarded, in essence, as Riemannian geometry freed from the quadratic constraint \cite{Ch1}.
This liberation from the quadratic mold is precisely what unlocks a far richer geometric landscape, wherein profound distinctions from the Riemannian setting arise as natural and inherent features.

A quintessential non-Riemannian feature afforded by this greater generality is the asymmetry of Finsler metrics --- a property that leads to a variety of unexpected phenomena.
A striking illustration is provided by the Funk metric on the Euclidean unit ball $\mathbb{B}^n_1(\mathbf{0}):=\{x\in \mathbb{R}^n\,:\,|x|<1\}$, defined by
\begin{align}\label{Funkex1intro}
 \mathsf{F}(x,y) = \frac{ \sqrt{(1-|x|^2)|y|^2 + \langle x, y \rangle^2}}{1-|x|^2} + \frac{\langle x, y \rangle}{1-|x|^2}, \qquad \forall (x,y)\in T\mathbb{B}^n_1(\mathbf{0}).
 \end{align}
where $|\cdot|$ and $\langle\cdot,\cdot\rangle$ denote the $n$-dimensional Euclidean norm and inner product, respectively (cf.~Funk \cite{Funk1}). On the one hand,  $(\mathbb{B}^n_1(\mathbf{0}),\mathsf{F})$ is a  Finsler Cartan--Hadamard 
manifold of constant negative curvature $-1/4$, thus serving as a Finslerian analogue of the Poincar\'e disc. On the other hand, it exhibits extreme asymmetry: for the radial direction $\mathsf{y}:=x/|x|$,
\begin{equation*}
\lim_{|x|\rightarrow 1^-}\mathsf{F}(x,\mathsf{y})=+\infty,\qquad \lim_{|x|\rightarrow 1^-}\mathsf{F}(x,-\mathsf{y})=2^{-1}.
\end{equation*}
Consequently, the Nash embedding theorem fails in this setting; that is,
$(\mathbb{B}^n_1(\mathbf{0}),\mathsf{F})$ admits no isometric embedding into any Minkowski space.
Moreover, the asymmetry is also reflected in the associated distance function, which satisfies
\[
 \lim_{|x|\rightarrow 1^-}d_{\mathsf{F}}(\mathbf{0},x)=+\infty,\qquad \lim_{|x|\rightarrow 1^-}d_{\mathsf{F}}(x,\mathbf{0})=\ln2.
 \]
These consequences of asymmetry further extend to functional-analytic structures: the associated Sobolev spaces become nonlinear, as shown by Farkas--Krist\'aly--Varga \cite{FKV} and Krist\'aly--Rudas \cite{KR}.

A second fundamental distinction is the absence of a canonical measure on a Finsler manifold. Unlike the Riemannian setting, various natural choices exist, and their analytic properties can differ substantially (see, e.g., \'Alvarez Paiva--Berck \cite{AB} and \'Alvarez Paiva--Thompson \cite{AT}).
This is vividly illustrated by sharp functional inequalities.
On any Riemannian Cartan--Hadamard manifold endowed with its canonical Riemannian measure, the Heisenberg--Pauli--Weyl (HPW) uncertainty principle, the Caffarelli--Kohn--Nirenberg (CKN) interpolation  inequality, and the Hardy inequality are all sharp (cf.~Krist\'aly \cite{Kris2}).
Recently, in contrast, Krist\'aly--Li--Zhao \cite{KLZ} showed that all three inequalities fail on the Finsler Cartan--Hadamard manifold $(\mathbb{B}^n_1(\mathbf 0),\mathsf F)$ equipped with the Busemann--Hausdorff measure. Since on this manifold the Busemann--Hausdorff measure coincides with the standard Lebesgue measure
 $\mathscr{L}^n$
  up to a multiplicative constant, one has
\begin{itemize}
\item HPW uncertainty principle/CKN interpolation inequality: for $p=2$ and $q=0$, or for $0<q<2<p$ with $2<n<2(p-q)/(p-2)$,
\begin{equation}\label{uncertainfalyonB}
\inf_{u\in C^\infty_0(\mathbb{B}^n_1(\mathbf{0}))\backslash\{0\}}\frac{\left( \int_{\mathbb{B}^n_1(\mathbf{0})}  \mathsf{F}^2(\nabla u) {\dd}x \right) \left(  \int_{\mathbb{B}^n_1(\mathbf{0})}  \frac{|u|^{2p-2}}{d_{\mathsf{F}}(\mathbf{0},\cdot)^{2q-2}}  {\dd}x \right)}{\left(\int_{\mathbb{B}^n_1(\mathbf{0})} \frac{|u|^p}{d_{\mathsf{F}}(\mathbf{0},\cdot)^q}{\dd}x\right)^2}=0;
\end{equation}

\item Hardy inequality: for any $p\in (1,n)$,
\begin{equation}\label{uncertainfalyonB2}
\inf_{u\in C^\infty_0(\mathbb{B}^n_1(\mathbf{0}))\backslash\{0\}}\frac{  \int_{\mathbb{B}^n_1(\mathbf{0})}  \mathsf{F}^p(\nabla u) {\dd}x    }{\int_{\mathbb{B}^n_1(\mathbf{0})} \frac{|u|^p}{d_{\mathsf{F}}(\mathbf{0},\cdot)^p}{\dd}x}=0.
\end{equation}
\end{itemize}
The phenomenon, in fact, extends to the whole class of Funk metric spaces. Recall that a Finsler manifold $(\Omega,\mathsf{F})$ is called a Funk metric space if $\Omega$ is a bounded domain in $\mathbb{R}^n$ and
\begin{equation*}
x + \frac{y}{\mathsf{F}(x,y)}  \in \partial \Omega, \qquad \forall  x\in \Omega, \  y \in T_x \Omega\backslash\{0\}.
\end{equation*}
Such spaces have constant curvature $-1/4$. When $\Omega=\mathbb{B}^n_1(\mathbf{0})$,
 the metric $\mathsf{F}$
  reduces to the one given in \eqref{Funkex1intro}. The underlying reason for the failure of these inequalities
is that the $S$-curvature (associated with the Lebesgue measure, cf.~Shen \cite{ShenAdv,ShenLecture}) --- a quantity with no Riemannian analogue --- satisfies the following dominance condition over the Ricci curvature:
\begin{equation}\label{strongScurvature}
\inf_{(x,y)\in T\Omega\backslash\{0\}}\frac{\mathbf{S}(x,y)}{\mathsf{F}(x,y)}>\sup_{(x,y)\in T\Omega\backslash\{0\}} \sqrt{(n-1)  \max\left\{0,\frac{-\mathbf{Ric}(x,y)}{\mathsf{F}(x,y)}\right\}}.
\end{equation}
For further properties of Funk metric spaces we refer to Faifman \cite{Fai}, Li \cite{Li}, Kaj\'ant\'o--Krist\'aly \cite{Ka2} and Shen \cite{ShenSpray,Sh0,Sh1}.

The discussion thus far has focused on the setting of negative curvature. In the case of zero curvature, the Riemannian situation is rigid: every complete, simply connected, flat Riemannian manifold is isometric to Euclidean space. This canonical model underpins the theory of Sobolev spaces and the sharpness/rigidity of all the functional inequalities aforementioned (see e.g. Cazacu--Flynn--Lam--Lu\cite{CFLL}, Hardy \cite{Hardy}, Hebey \cite{He} and Krist\'aly \cite{Kris2}).
In contrast to Funk metric spaces, a natural question arises in the Finsler setting:
\begin{quote}
\textit{Does there exist a flat Finsler Cartan--Hadamard manifold that exhibits the same unexpected non-Riemannian features
under a weaker $S$-curvature condition than \eqref{strongScurvature}?}
\end{quote}
\noindent
This paper provides a complete affirmative answer.
The candidate is furnished by the classical Berwald's metric --- a projectively flat, zero-curvature Finsler metric on $\mathbb{B}^n_1(\mathbf{0})$, originally constructed by Berwald in relation to Hilbert's fourth problem \cite{Be1,Be2}. It is explicitly given by
 \begin{equation*}
\B(x,y):= \frac{ (\sqrt{ (1-|x|^2) |y|^2 + \langle x, y \rangle^2} +\langle x, y \rangle )^2}{ (1-|x|^2)^2 \sqrt{ (1-|x|^2) |y|^2 + \langle x, y \rangle^2} }, \qquad \forall (x,y)\in T\mathbb{B}^n_1(\mathbf{0}).
\end{equation*}
Thus $(\mathbb{B}^n_1(\mathbf{0}),\B)$
serves as a flat Finslerian counterpart of the Poincar\'e disc.

In what follows, let $p'$
 denote the H\"older conjugate exponent of
$p$. Our first main result is the following.
\begin{theorem}\label{Berwmainth} Berwald's metric space $(\mathbb{B}^n_1(\mathbf{0}),\B)$ is a   flat Cartan--Hadamard manifold satisfying:
\begin{enumerate}[\rm (i)]

\item\label{CKNFIALBE1}  $(\mathbb{B}^n_1(\mathbf{0}),\B)$ admits no isometric embedding into any Minkowski space;

\item\label{CKNFIALBE2} the Sobolev space $W^{1,p}_0(\mathbb{B}^n_1(\mathbf{0}),\B,\mathscr{L}^n)$ is not a vector space for all $p\in (1,+\infty)$;


\item\label{CKNFIALBE4}
the $L^p$-Hardy inequality fails for all $p\in (1, n)$, i.e.,
		\[
		\inf_{f\in C^\infty_0({\mathbb{B}^n_1(\mathbf{0})})\backslash\{0\}}\frac{\int_{\mathbb{B}^n_1(\mathbf{0})} {\B}^p (\nabla f) {\dd}x}{\int_{\mathbb{B}^n_1(\mathbf{0})} \frac{|f|^p}{d_{\B}(\mathbf{0},\cdot)^{p}}{\dd}x}=0.
		\]
		
\item\label{CKNFIALBE5}
the generalized uncertainty principle fails, i.e., for any $-p+1<s\leq 1<p<n$,
		\[
		\inf_{f\in C^\infty_0({\mathbb{B}^n_1(\mathbf{0})})\backslash\{0\}}\frac{\left( \int_{{\mathbb{B}^n_1(\mathbf{0})}} {\B}^p (\nabla f){\dd}x \right)^{1/p}\left(  \int_{{\mathbb{B}^n_1(\mathbf{0})}}|f|^p d_{\B}(\mathbf{0},\cdot)^{p's} {\dd}x \right)^{1/{p'}} }{ \int_{{\mathbb{B}^n_1(\mathbf{0})}} |f|^p d_{\B}(\mathbf{0},\cdot)^{s-1} {\dd}x   }=0.
		\]

\end{enumerate}
\end{theorem}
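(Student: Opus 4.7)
The proof is driven throughout by the severe radial asymmetry of $\B$ near the boundary. Substituting $x=r\hat x$ with $\hat x=x/|x|$ into the defining formula of $\B$ yields the explicit identities
\[
\B(x,\hat x)=\frac{1}{(1-r)^2},\qquad \B(x,-\hat x)=\frac{1}{(1+r)^2},
\]
so the pointwise reversibility $\B(x,\hat x)/\B(x,-\hat x) = ((1+r)/(1-r))^2$ diverges as $r\to 1^-$. Dualizing via the Legendre transform gives the covector analogues $\B^*(x,dr) = (1-r)^2$ and $\B^*(x,-dr) = (1+r)^2$, and integrating along radial rays produces the asymmetric distances $d_\B(\mathbf 0,r\hat x) = r/(1-r)\to\infty$ and $d_\B(r\hat x,\mathbf 0)=r/(1+r)\to 1/2$. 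These four formulas are the common computational engine for every item.

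\smallskip
Item \eqref{CKNFIALBE1} is immediate: the reversibility of any Minkowski norm is a single finite constant independent of base point (by translation invariance), so no isometric embedding $(\mathbb B^n_1(\mathbf 0),\B)\hookrightarrow (V,\|\cdot\|)$ can reproduce the boundary blow-up of $\B$'s reversibility. For \eqref{CKNFIALBE2} I follow the scheme of Farkas--Krist\'aly--Varga \cite{FKV} and Krist\'aly--Radus \cite{KR}: produce a radial $f\in C^\infty_0(\mathbb B^n_1(\mathbf 0))$ with positive radial derivative near the boundary, so that $df$ lies in the ``good'' covector direction where $\B^*$ carries the vanishing factor $(1-r)^2$ and $\int\B^p(\nabla f)\,dx<\infty$. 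For the same profile, $-df$ now lies in the opposite direction where $\B^*$ is governed by the bounded-below factor $(1+r)^2$, and a careful tuning of the radial profile (e.g.\ a smooth cutoff of $(r/(1-r))^\alpha$ at a critical value of $\alpha$) forces $\int\B^p(\nabla(-f))\,dx=\infty$. Since the Sobolev norm is therefore not symmetric under $f\mapsto -f$, the space $W^{1,p}_0(\mathbb B^n_1(\mathbf 0),\B,\mathscr L^n)$ is not closed under negation and hence is not a vector space.

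\smallskip
For \eqref{CKNFIALBE4} and \eqref{CKNFIALBE5} I construct a family $\{f_\varepsilon\}\subset C^\infty_0(\mathbb B^n_1(\mathbf 0))$ of radial test functions adapted to the asymmetry: each $f_\varepsilon$ is a smooth cutoff of a tuned power of $d_\B(\mathbf 0,\cdot)$, arranged so that $df_\varepsilon$ sits, on the bulk of its support, in the ``good'' covector direction where $\B^*$ decays like $(1-r)^2$, while the narrow decaying collar near the boundary is chosen so small that its contribution to $\int\B^p(\nabla f_\varepsilon)\,dx$ is negligible. The Hardy/CKN denominators are then governed by the polynomial weight $d_\B(\mathbf 0,\cdot)^{-p}\sim (1-r)^p r^{-p}$ (and in \eqref{CKNFIALBE5} the extra factor $d_\B^{p's}$), and by tuning the exponent so that the bulk denominator integral diverges while the numerator remains bounded --- or at any rate grows strictly more slowly --- the ratios in \eqref{CKNFIALBE4} and \eqref{CKNFIALBE5} are forced to zero.

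\smallskip
The principal technical obstacle is precisely this balancing. Berwald's metric, being flat, does \emph{not} satisfy the strong $S$-curvature dominance \eqref{strongScurvature} underpinning the abstract failure mechanism of Krist\'aly--Li--Zhao \cite{KLZ}, so the Funk-case proof is unavailable and the vanishing of the quotients must be extracted by explicit manipulation of the algebraic form of $\B$. One has to match the quadratic gain $(1-r)^{2p}$ from the asymmetric dual norm against the weight $d_\B(\mathbf 0,\cdot)^{-p}$, the Euclidean element $r^{n-1}dr$, and the additional exponents $s$ and $p's$ appearing in \eqref{CKNFIALBE5}. Verifying that this balance can be arranged in our favor \emph{uniformly} across the full admissible range $1<p<n$ of \eqref{CKNFIALBE4} and $-p+1<s\le 1<p<n$ of \eqref{CKNFIALBE5}, using a single family of radial cutoff test functions, is the delicate heart of the argument.
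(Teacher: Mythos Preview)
Your treatment of \eqref{CKNFIALBE1} matches the paper's. The serious gap is in \eqref{CKNFIALBE2}: you write ``produce a radial $f\in C^\infty_0(\mathbb{B}^n_1(\mathbf{0}))$ with positive radial derivative near the boundary \dots\ forces $\int\B^p(\nabla(-f))\,dx=\infty$''. This is self-contradictory. If $f\in C^\infty_0(\mathbb{B}^n_1(\mathbf{0}))$ then its support is a compact subset of the open ball, so $f$ vanishes identically near the boundary and both $\int\B^{*p}(\pm df)\,dx$ are trivially finite. What is actually needed is a function $w$ that lies in $W^{1,p}_0$ \emph{as a limit} of $C^\infty_0$ functions (in the backward topology), is not itself compactly supported, and satisfies $\|{-w}\|_{W^{1,p}_{\mathscr L^n}}=\infty$. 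The paper takes $w(x)=-[\ln((2-|x|)/(1-|x|))]^{-1/n}$, which is negative, radially increasing to $0$ at the boundary (so $dw$ is a positive multiple of $dr$ and lands in the good direction), and then proves $w\in W^{1,p}_0$ by explicit truncation. The opposite sign $-dw$ is then shown to have $\int\B^{*p}(-dw)\,dx=\infty$ via an explicit formula for $\B^*(x,-dw)$ obtained from a quartic equation satisfied by $\B^*$.

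For \eqref{CKNFIALBE4}--\eqref{CKNFIALBE5}, your strategy is sound in spirit but the specific test family is the wrong one. The paper uses the exponential family $u_\iota=-e^{-\iota\, d_\B(\mathbf{0},\cdot)}$ with $\iota\to0^+$: since $u_\iota$ is negative, radially increasing, and tends to $0$ at the boundary, $du_\iota$ lies in the good covector direction \emph{everywhere}, giving simply $\B^*(du_\iota)=\iota|u_\iota|$. Approximation by $C^\infty_0$ is done by truncation from above, $\min\{0,u_\iota+\epsilon\}$, which never introduces a ``decaying collar'' where the gradient points the wrong way---so your acknowledged ``principal technical obstacle'' disappears. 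By contrast, your proposed ``tuned power of $d_\B$'' does not drive the Hardy quotient to zero: for any radial $f=f(r_\B)$ with $f'>0$ one has $\B^*(df)=f'(r_\B)$, so the Hardy functional becomes $\int|f'|^p\,dx\big/\int|f|^p r_\B^{-p}\,dx$, and for $f=-r_\B^{\alpha}$ this ratio is identically $|\alpha|^p$ on the bulk. The vanishing comes not from a power law but from the scale parameter $\iota$ in the exponential.
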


Kaj\'ant\'o--Krist\'aly--Peter--Zhao \cite{KKPZ} established the sharp  generalized uncertainty principle on flat Riemannian Cartan--Hadamard manifolds. However, Theorem \ref{Berwmainth}/\eqref{CKNFIALBE5} demonstrates that it \textit{fails completely} in the flat Finsler setting; consequently, all known forms of the HPW uncertainty principle (cf.~Kombe--\"Ozaydin\cite{KO}, Kirst\'aly\cite{Kris2}, and Nguyen \cite{Ng}) and the hydrogen uncertainty principle (cf.~Cazacu--Flynn--Lam\cite{CFL} and Frank \cite{Fr}) --- both of which holds in the Riemannian setting --- also fail there.

A natural extrapolation would suggest that the generalized Caffarelli--Kohn--Nirenberg (CKN) inequality --- which holds in the flat Riemannian case \cite{KKPZ} --- should likewise fail on $(\mathbb{B}^n_1(\mathbf{0}),\B)$. Surprisingly, the actual picture is more delicate.
\begin{theorem}\label{sbeer} On Berwald's metric space  $(\mathbb{B}^n_1(\mathbf{0}),\B)$,  let  $1<p<m$ and $p(n+s-1)>m(n-p)>0$.
\begin{enumerate}[\rm (i)]

\item\label{BerwCKN_i} If  $1-p<s\leq 2$, then the generalized Caffarelli--Kohn--Nirenberg inequality fails:
		\[
		\inf_{f\in C^\infty_0({\mathbb{B}^n_1(\mathbf{0})})\backslash\{0\}}\frac{\left( \int_{{\mathbb{B}^n_1(\mathbf{0})}}{\B}^p (\nabla f) {\dd}x   \right)^{1/p}\left( \int_{{\mathbb{B}^n_1(\mathbf{0})}}  {|f|^{p'(m-1)}}{d_{\B}(\mathbf{0},\cdot)^{p's}}{\dd}x   \right)^{1/p'}  }{ \int_{{\mathbb{B}^n_1(\mathbf{0})}}  {|f|^{m}}{d_{\B}(\mathbf{0},\cdot)^{s-1}}{\dd}x  }=0.
		\]
\item\label{BerwCKN_ii} If $s>2$, then the generalized Caffarelli--Kohn--Nirenberg inequality holds: for any $f\in C^\infty_0(\mathbb{B}^n_1(\mathbf{0}))$,
\[
 {\left( \int_{{\mathbb{B}^n_1(\mathbf{0})}}{\B}^p (\nabla f) {\dd}x   \right)^{1/p}\left( \int_{{\mathbb{B}^n_1(\mathbf{0})}}  {|f|^{p'(m-1)}}{d_{\B}(\mathbf{0},\cdot)^{p's}}{\dd}x   \right)^{1/p'}  }\geq \frac{ s-2}{4m} { \int_{{\mathbb{B}^n_1(\mathbf{0})}}  {|f|^{m}}{d_{\B}(\mathbf{0},\cdot)^{s-1}}{\dd}x},
\]
with equality if and only if $f=0$.
\end{enumerate}
\end{theorem}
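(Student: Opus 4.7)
The plan is to treat the two parts with quite different tools: for \eqref{BerwCKN_i} I would produce an explicit family of test functions whose CKN quotient tends to zero, while for \eqref{BerwCKN_ii} I would combine a Finsler divergence identity with H\"older's inequality to produce the quantitative lower bound. The common setup is to work in the geodesic polar coordinates adapted to $\B$. Writing $\rho(x):=d_\B(\mathbf{0},x)$, the projective flatness of $\B$ forces the geodesics from the origin to be Euclidean rays, and a direct computation yields
\[
\rho(x)=\frac{|x|}{1-|x|},\qquad \nabla \rho=(1-|x|)^2\,\frac{x}{|x|}\ \text{(as a Euclidean vector field)},\qquad \B(\nabla \rho)=1.
\]
The Lebesgue measure then reads $dx=\omega_{n-1}\,\rho^{n-1}(1+\rho)^{-(n+1)}\,d\rho\,d\sigma$, and for a radial $f(x)=F(\rho(x))$ with $F$ nondecreasing the Berwald norm of the Finsler gradient collapses to $\B(\nabla f)=F'(\rho)$ -- crucially smaller than its Euclidean counterpart, which is the source of all the asymmetry effects.

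For \eqref{BerwCKN_i}, I would exploit this asymmetry by testing the quotient against increasing radial profiles $F_\lambda$ parametrized by a single scaling variable $\lambda$. After substitution into the three integrals (using the volume form above), one obtains a one-dimensional weighted optimization problem. The hypothesis $s\leq 2$ is precisely what guarantees that the weights in the denominator can dominate those in the numerator along the family, so that a suitable tuning of $F_\lambda$ (via a power or logarithmic profile in $\rho$ with a smooth cutoff near the boundary) forces the ratio to zero. The admissibility constraints $1<p<m$ and $p(n+s-1)>m(n-p)>0$ are used to keep each competing integral finite for every member of the family.

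For \eqref{BerwCKN_ii}, assume $f\geq 0$ by replacing $f$ with $|f|$. The main device is the Finsler vector field
\[
V(x):=\rho(x)^s\,\nabla \rho(x),
\]
whose Lebesgue divergence, computed directly from the explicit formula for $\nabla\rho$, satisfies
\[
\di V\;=\;(s-2)\,\rho^{s-1}\;+\;(n+1)\,\frac{\rho^{s-1}}{1+\rho}\;\geq\;(s-2)\,\rho^{s-1}.
\]
Because $\B(\nabla \rho)=1$ and $V$ points in the direction where $\B$ attains its larger value, the Finsler dual inequality $df(W)\leq \B^*(df)\,\B(W)$ together with $\max(\B(V),\B(-V))=\rho^s$ gives $|\nabla f\cdot V|\leq \rho^s\,\B(\nabla f)$. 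Integrating $\int f^m\,\di V\,dx$ by parts and applying H\"older with exponents $p$ and $p'$ then delivers the inequality with an explicit positive constant. Equality propagates through all steps (positivity of the extra term $(n+1)\rho^{s-1}/(1+\rho)$ in $\di V$, Finsler Cauchy--Schwarz, and H\"older) and collapses only for $f\equiv 0$.

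The most delicate technical point is the nonreversibility of $\B$: one must take $V$ precisely in the Berwald-gradient direction of $\rho$ (so that $\B(V)\geq \B(-V)$), for otherwise the Finsler dual estimate cannot capture both signs of $\nabla f\cdot V$ uniformly. The mirror difficulty in \eqref{BerwCKN_i} is locating the radial profile that realizes the threshold at $s=2$; the admissibility constraints on $(p,m,s,n)$ are what guarantee its existence.
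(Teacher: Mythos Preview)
Your treatment of part \eqref{BerwCKN_ii} is correct and in fact sharper than the paper's. The paper bounds $|\langle {\dd}f,\nabla r\rangle|$ by passing through the Riemannian component $\alpha$ of the $(\alpha,\beta)$-decomposition: from $\B\le 4\alpha$ one gets $\B^*\ge \tfrac14\alpha^*$, and the Riemannian Cauchy--Schwarz then introduces the factor $4$, yielding the stated constant $\tfrac{s-2}{4m}$. Your route is more direct: since $\B(\nabla r)=1$ and $\B(-\nabla r)=\bigl(\tfrac{1-|x|}{1+|x|}\bigr)^{2}\le 1$, the Finsler dual inequality gives $|\langle {\dd}f,\nabla r\rangle|\le \B^*({\dd}f)$ with no loss, so your argument actually produces the stronger constant $\tfrac{s-2}{m}$. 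One caveat: the reduction ``assume $f\ge 0$ by replacing $f$ with $|f|$'' is not innocuous for an irreversible metric, because $\B^*({\dd}|f|)$ equals $\B^*(-{\dd}f)$ on $\{f<0\}$ and this need not be bounded by $\B^*({\dd}f)$. The fix is to keep $f$ signed and integrate $|f|^m\,\di V$ by parts; the chain rule gives $d(|f|^m)=m|f|^{m-1}\operatorname{sgn}(f)\,{\dd}f$, and then $|\langle {\dd}f,\nabla r\rangle|\le \B^*({\dd}f)$ is exactly what you need.

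Part \eqref{BerwCKN_i} is where your proposal has a genuine gap. You assert that ``the hypothesis $s\le 2$ is precisely what guarantees that the weights in the denominator can dominate those in the numerator,'' but you neither name the test family nor explain how the threshold $s=2$ emerges from the competition of weights. The paper uses $v_\iota=-\exp(-\iota\, r^{1+\mu/p})$ with a fixed $\mu=\tfrac32$ and lets $\iota\to 0^+$; after reducing to one-dimensional integrals via your polar formula, the gradient term contributes a power $\iota^{(p+1)/(p+\mu)}$, while the $r^{p's}$-integral contributes at worst $\iota^{-(p's-1)/(p+\mu)}$, and the product exponent $\tfrac{p(2-s)}{p+\mu}$ is positive precisely when $s<2$. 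The borderline $s=2$ requires a separate argument: there the numerator stays bounded but the \emph{denominator} $\int |v_\iota|^m r\,{\dd}x$ diverges, via a lemma of the type $\lim_{a\to 0^+}\int_0^1 t^n(a+t)^{-(n+1)}\,{\dd}t=+\infty$. Your sketch (``a power or logarithmic profile with a smooth cutoff'') does not capture either of these mechanisms, and in particular gives no hint of why $s=2$ rather than some other value is the threshold.
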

Note that the hypotheses $1<p<m$ and $p(n+s-1)>m(n-p)>0$ excludes the case
 $s \le 1-p$.
Thus, in contrast to the complete failure of the uncertainty principle, the generalized CKN inequality exhibits a sharp threshold behavior in the parameter $s=2$, separating regimes of validity from those of failure. This further underscores how non-Riemannian geometry can alter the very structure of classical functional inequalities.

To our knowledge, this threshold phenomenon is observed here for the first time in a Finsler setting. Its occurrence is notable because, on Finsler manifolds with extreme asymmetry (i.e., infinite reversibility), the relevant functional inequalities typically cannot be established in a global form;  see Huang--Krist\'aly--Zhao\cite{HKZ}, Kaj\'ant\'o\cite{Ka}, and Mester--Peter--Varga\cite{MPV}. For example, while the conclusions of Theorem \ref{Berwmainth} extend to all Funk metric spaces (see Section \ref{sectionFunk}), the generalized CKN inequality fails unconditionally there, as the following result shows.

\begin{theorem}\label{funkcknf}
On  a Funk metric space  $(\Omega,\mathsf{F})$,  for any  $1<p<m$ and $p(n+s-1)>m(n-p)>0$,
		\[
		\inf_{f\in C^\infty_0({\mathbb{B}^n_1(\mathbf{0})})\backslash\{0\}}\frac{\left( \int_{{\mathbb{B}^n_1(\mathbf{0})}}{\mathsf{F}}^p (\nabla f) {\dd}x   \right)^{1/p}\left( \int_{{\mathbb{B}^n_1(\mathbf{0})}}  {|f|^{p'(m-1)}}{d_{\mathsf{F}}(\mathbf{0},\cdot)^{p's}}{\dd}x   \right)^{1/p'}  }{ \int_{{\mathbb{B}^n_1(\mathbf{0})}}  {|f|^{m}}{d_{\mathsf{F}}(\mathbf{0},\cdot)^{s-1}}{\dd}x  }=0.
		\]
\end{theorem}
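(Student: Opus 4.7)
The plan is to exhibit a sequence $\{f_k\}\subset C^\infty_0(\Omega)$ along which the CKN ratio vanishes, building directly on the strong $S$-curvature dominance \eqref{strongScurvature} that characterizes Funk metric spaces. First I would record the structural features: (i) the normalized $S$-curvature satisfies $\mathbf{S}=\tfrac{n+1}{2}\mathsf{F}$, so \eqref{strongScurvature} holds strictly; (ii) forward geodesics from the base point $\mathbf{0}$ are Euclidean rays, the forward distance $r(x):=d_\mathsf{F}(\mathbf{0},x)$ diverges at $\partial\Omega$, while the reverse distance $\overline{r}(x):=d_\mathsf{F}(x,\mathbf{0})$ is uniformly bounded; (iii) a reverse eikonal identity, which for a decreasing profile $\psi$ yields the clean formula $\mathsf{F}(\nabla(\psi\circ\overline{r}))=|\psi'(\overline{r})|$ with no asymmetry penalty, whereas in the forward-radial direction a descending gradient carries a factor of order $e^{r}$ near $\partial\Omega$.

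Passing to Finsler polar coordinates centered at $\mathbf{0}$, Lebesgue measure decomposes as $dx=\mathfrak{a}(r,\theta)\,dr\,d\theta$, and the positive $S$-curvature yields, via the Jacobi equation, an exponential Jacobian bound $\mathfrak{a}(r,\theta)\le C e^{-r}$ for large $r$. I would then take reverse-radial test functions $f_k(x)=\psi_k(\overline{r}(x))$ with a decreasing profile $\psi_k$ vanishing near $\partial\Omega$, so that the three CKN integrals each collapse to weighted one-dimensional integrals against the weights $\mathfrak{a}$, $r^{p's}\mathfrak{a}$, and $r^{s-1}\mathfrak{a}$. The CKN ratio thereby reduces to a one-dimensional Hardy-type quotient that can be analyzed by an elementary scaling of $\psi_k$, and the extra exponential suppression from the positive $S$-curvature forces the numerator to decay strictly faster than the denominator as $k\to\infty$, across the entire admissible parameter range $1<p<m$, $p(n+s-1)>m(n-p)>0$.

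The main obstacle is what one might call the reverse-gradient pathology: any compactly supported radial profile must have a descending segment, and if this descent is taken in the forward-radial direction then $\mathsf{F}(\nabla f_k)$ carries the factor of order $e^{r}$ noted in (iii), which by itself contributes an exponential to the numerator and overwhelms the denominator. The delicate point is therefore to route the descent through the reverse-distance variable $\overline{r}$ where no exponential penalty is paid, while keeping $f_k\in C^\infty_0(\Omega)$. Once the construction is engineered on the Funk unit-ball model, it transfers verbatim to an arbitrary Funk metric space, since the universality of features (i)--(iii) is precisely what distinguishes the unconditional failure asserted in Theorem~\ref{funkcknf} from the sharp threshold behavior of Berwald's metric space established in Theorem~\ref{sbeer}.
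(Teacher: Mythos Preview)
Your proposal misidentifies the central difficulty. You assert that ``any compactly supported radial profile must have a descending segment,'' and that in the forward-radial direction this descent carries an asymmetry penalty of order $e^{r}$; you then propose to route the construction through the reverse distance $\overline{r}=d_{\mathsf{F}}(x,\mathbf{0})$. But no descent is needed. The paper takes $u_\iota(x):=-[1-\phi(x)]^\iota=-e^{-\iota r}$, which is \emph{negative and monotone increasing} in $r$; hence $\mathrm{d}u_\iota$ is a nonnegative multiple of $\mathrm{d}r$, and since $\mathsf{F}^*(\mathrm{d}r)=1$ one obtains the clean identity $\mathsf{F}^*(\mathrm{d}u_\iota)=\iota|u_\iota|$ with no penalty at all. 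Compact support is achieved by the truncation $\min\{0,u_\iota+\delta\}$, which remains non-decreasing---there is no requirement that a function in $C^\infty_0(\Omega)$ vanish at the interior point $\mathbf{0}$. After this, the proof is a short computation: the gradient integral is $\iota^p$ times a bounded quantity, the $r^{p's}$-weighted integral is uniformly bounded because the exponential Jacobian decay $e^{-r}$ dominates any power of $r$, and the denominator is bounded below; letting $\iota\to 0^+$ finishes the argument across the entire admissible parameter range.

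Your detour through $\overline{r}$ also has genuine gaps of its own. For a general Funk domain one computes $\overline{r}(x)=\ln(1+\phi(-x))$, whereas $r(x)=-\ln(1-\phi(x))$; when $\phi$ is non-reversible these depend on different quantities, so a function of $\overline{r}$ is not radial in $r$, and the CKN integrals (which carry powers of $r$) do not ``collapse to weighted one-dimensional integrals'' as you claim. Your ``transfers verbatim to an arbitrary Funk metric space'' is therefore unsupported. Even on the unit-ball model, $\overline{r}$ is confined to the bounded interval $[0,\ln 2)$, so the vague ``elementary scaling of $\psi_k$'' has no dilation parameter to exploit; any workable construction would have to concentrate $\psi_k$ near $\ln 2$, i.e.\ near $\partial\Omega$, which is exactly what the forward-radial functions $u_\iota$ already do more transparently.
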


The decisive difference lies in the $S$-curvature. On a Funk metric space it admits a uniform positive lower bound \eqref{strongScurvature}, while on Berwald's metric space it decays polynomially (of order $(1+d_{\B}(\mathbf{0},x))^{-1}$), and in particular

\begin{equation}\label{strongScurvature2}
\inf_{(x,y)\in T\mathbb{B}^n_1(\mathbf{0})\backslash\{0\}}\frac{\mathbf{S}(x,y)}{\mathsf{B}(x,y)}= \sup_{(x,y)\in T\Omega\backslash\{0\}} \sqrt{(n-1)  \max\left\{0,\frac{-\mathbf{Ric}(x,y)}{\mathsf{B}(x,y)}\right\}} =0.
\end{equation}
Crucially, \eqref{strongScurvature2} alone is not enough to support the threshold behavior in Theorem \ref{sbeer}: for instance, every Minkowski space endowed with the Lebesgue measure satisfies \eqref{strongScurvature2} yet the generalized CKN inequality holds there, and none of the pathologies in Theorem \ref{Berwmainth} occur.

This contrast motivates a general investigation. We obtain the following characterization.
\begin{theorem}\label{voluemcompar22}
Let $(M,F)$ be an $n$-dimensional forward complete Finsler manifold  with $\mathbf{Ric} \geq -(n-1)k^2$ for some $k\geq 0$ and  let $\m$ be a smooth positive measure on $M$. Suppose that  there exists  a point $o\in M$ such that
\begin{equation}\label{remscurvature}
\mathbf{S}(\nabla r)\geq  (n-1)k+\frac{C}{1+r},
\end{equation}
where $r(x)=d_F(o,x)$ is the distance function from $o$ and $C$ is a constant satisfying
\[
C>1  \ \text{ if } \ k>0; \qquad C \geq n  \ \text{ if } \ k=0.
\]
 Then the following statements are true:
\begin{enumerate}[\rm (i)]

\item \label{weakHardyinB}
The $L^p$-Hardy inequality fails for all $p\in (1, n)$, i.e.,
		\[
		\inf_{f\in C^\infty_0(M)\backslash\{0\}}\frac{ \int_{M} {F}^{*p}({\rm d} f) \dm}{ \int_{M}  {|f|^p}{r^{-p}}\dm}=0.
		\]
		
\item \label{generunicertpinweakcurva}
The generalized uncertainty principle fails, i.e., for any $-p+1<s\leq 1<p<n$ ,
		\[
		\inf_{f\in C^\infty_0({M})\backslash\{0\}}\frac{\Big(  \int_{{M}} F^{*p}({\rm d} f){\dm} \Big)^{1/p}\Big(   \int_{{M}}|f|^p r^{p's} {\dm} \Big)^{1/{p'}} }{  \int_{{M}} |f|^p r^{s-1} {\dm}   }=0.
		\]

\item \label{geenerCknineq}
The generalized  Caffarelli--Kohn--Nirenberg inequality fails for certain parameters. Specifically, if $1<p<m$, $p(n+s-1)>m(n-p)>0$,
and
\[
s < \begin{cases}
C, & \text{if } k > 0, \\[4pt]
C - n + 1, & \text{if } k = 0,
\end{cases}
\]
		then
		\begin{equation}\label{ckngenifr}
		\inf_{f\in C^\infty_0({M})\backslash\{0\}}\frac{ \Big( \int_{{M}}F^{*p}({\rm d} f) {\dm}   \Big)^{1/p}\Big( \int_{{M}}  {|f|^{p'(m-1)}}{r^{p's}}{\dm}   \Big)^{1/p'}  }{ \int_{{M}}  {|f|^{m}}{r^{s-1}}{\dm}  }=0.
		\end{equation}

\item \label{generCKNin} If  the $S$-curvature is strengthened to
\[
\inf_{(x,y)\in TM\backslash\{0\}}\frac{\mathbf{S}(x,y)}{F(x,y)}> (n-1)k,
\]
 then
the generalized Caffarelli--Kohn--Nirenberg inequality fails completely, i.e., \eqref{ckngenifr} holds for  any $1<p<m$ and $p(n+s-1)>m(n-p)>0$.
\end{enumerate}
\end{theorem}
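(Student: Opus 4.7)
The unified strategy combines a Finsler volume comparison with a family of radial cutoff test functions; the four parts differ only in the choice of test function and the bookkeeping of exponents.

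\emph{Step 1 (volume comparison).} In geodesic polar coordinates centered at $o$, I will write $\m = \sigma_\m(t,y)\, dt \wedge d\nu(y)$ on $M\setminus\Cut(o)$, where $y$ ranges over the unit tangent sphere $S_oM$. Along a unit-speed minimal geodesic $\gamma_y$, the logarithmic derivative $\frac{d}{dt}\log \sigma_\m(t,y)$ equals the weighted mean curvature of the distance sphere, which I will decompose as $H(t) - \mathbf{S}(\gamma_y'(t))$, where $H(t)$ is the unweighted mean curvature. A Riccati/Bishop argument under $\mathbf{Ric} \geq -(n-1)k^2$ yields $H(t) \leq (n-1)k\coth(kt)$ when $k>0$ and $H(t) \leq (n-1)/t$ when $k=0$; combining with \eqref{remscurvature} and integrating, this gives
$$\sigma_\m(t,y) \leq A(y)(1+t)^{-C} \ (k>0), \qquad \sigma_\m(t,y) \leq A(y)\, t^{n-1}(1+t)^{-C} \ (k=0),$$
for some bounded $A \colon S_oM \to (0,\infty)$. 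For (iv), the strengthened hypothesis $\mathbf{S}(x,y)/F(x,y) \geq (n-1)k + \delta$ for some $\delta>0$ upgrades this to exponential decay $\sigma_\m(t,y) \leq A(y) e^{-\delta t}$ for large $t$, the key mechanism that will kill every polynomial weight.

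\emph{Step 2 (test functions).} For (i), I will use the radial bump $f_R(x) = g(r(x)/R)$, where $g\in C_c^\infty([0,\infty))$ equals $1$ on $[0,1/2]$ and vanishes on $[1,\infty)$. Using $F^*(dr)=1$ together with Step 1, the gradient energy $\int_M F^{*p}(\dd f_R)\,\dm \lesssim R^{-p}\int_{S_oM}\int_{R/2}^R \sigma_\m(t,y)\,dt\,d\nu(y) \to 0$, while the denominator $\int |f_R|^p r^{-p}\,\dm$ stays bounded below by a positive constant coming from a fixed ball around $o$ (using $p<n$); the ratio thus vanishes as $R\to\infty$. For (ii) and (iii), essentially the same $f_R$ works, possibly after multiplying by a power of $r$ in borderline cases ($s=1$, or $s=C-n+1$): the gradient factor contributes $\lesssim R^{-1}(\text{annulus volume})^{1/p}$, the auxiliary weight factor $\int |f_R|^{p'(m-1)} r^{p's}\,\dm$ (or $\int |f_R|^p r^{p's}\,\dm$ for (ii)) grows at a rate governed by the exponent $p's+n-C$, and the denominator stays bounded below since the hypotheses force $s>1-n$. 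The threshold $s<C-n+1$ for $k=0$ (resp.\ $s<C$ for $k>0$) is exactly what makes the weight factor grow more slowly than the gradient factor decays, forcing the quotient to vanish. For (iv), the exponential decay from Step~1 dominates every polynomial weight, so \eqref{ckngenifr} follows for the whole admissible parameter range with no restriction on $s$.

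The delicate point is Step~1: in the Finsler setting, the mean curvature, the volume density $\sigma_\m$, and the $S$-curvature interact through a nonquadratic structure, with $\mathbf{S}$ a first-order invariant of $\m$ while the Jacobi estimate is second-order. The cleanest route is probably to revisit the Ohta--Sturm-type comparison for the weighted Laplacian of the distance function and to sharpen it so as to absorb the additive correction $C/(1+r)$ uniformly in $y$; cut locus behavior is then handled in the usual weak distributional sense. A secondary subtlety in Step~2 is tracking the exponents carefully and, in the borderline cases of (ii) and (iii), replacing the plain bump by a truncated power such as $r^{-\beta}g(r/R)$ to eke out the threshold; this is bookkeeping rather than new input.
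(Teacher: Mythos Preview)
Your Step~1 is correct and is exactly the route the paper takes: combine the Zhao--Shen volume comparison (the paper's Theorem~\ref{bascivolurcompar}) with integration of the $S$-curvature bound along radial geodesics to obtain the density estimates you wrote down. This is in fact the routine step, not the delicate one---the Finsler machinery is already packaged in that comparison theorem, so no new Riccati analysis is needed.

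Your Step~2 departs genuinely from the paper. The paper does \emph{not} use scaled cutoffs $f_R=g(r/R)$; for (i)--(iii) it works with the one-parameter family $v_\iota(x)=-e^{-\iota r^{1+\mu/p}}$, where the auxiliary exponent $\mu$ is chosen in $(C-1,C)$ (when $k>0$) or $(C-n,C-n+1)$ (when $k=0$), and for (iv) it switches to a separate family $u_\iota=-[1+(\iota r)^{1+s/(p-1)}]^{(p-1)/(p-m)}$. Your bump approach is more elementary and, after the exponent bookkeeping, recovers exactly the thresholds in (iii): the condition $s<C-n+1$ (resp.\ $s<C$) is precisely what makes the product $R^{-1}\cdot(\text{annulus volume})^{1/p}\cdot(\text{weight integral})^{1/p'}$ vanish as $R\to\infty$. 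For (iv) your observation that exponential density decay dominates every polynomial weight is cleaner than the paper's separate construction. The paper's advantage is that the free parameter $\mu$ absorbs the borderline cases you flag (e.g.\ $s=1$ in (ii)) more systematically; at the extreme endpoint $k=0$, $C=n$, $s=1$ neither approach closes without an additional logarithmic refinement, so your caveat about modifying the test function there is well placed.
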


Taking
$o=\mathbf{0}$ and endowing the space with the Lebesgue measure, Berwald's metric space satisfies $\mathbf{Ric}\equiv0$ and $\mathbf{S}(\nabla r)=\frac{n+1}{1+r}$  whereas  the Funk metric space satisfies $\mathbf{Ric}\equiv-\frac{(n-1)}4$ and $\mathbf{S}\equiv\frac{(n+1)}2$. Therefore,
Theorem \ref{voluemcompar22} accounts for the failure of the functional inequalities on both settings through their curvature profiles.  Notably, the
$S$-curvature condition in Theorem \ref{voluemcompar22} is considerably weaker than that in \cite{KLZ}, yet its conclusions are stronger.
Finally,  Theorem \ref{voluemcompar22} remains valid even when the condition \eqref{remscurvature} is weakened to
\begin{equation}\label{weakSball}
\mathbf{S}(\nabla r)\geq  (n-1)k+\frac{C}{1+r} \quad \text{ in \  }   M\backslash B^+_R(o),
\end{equation}
for some $R>0$; see Remark \ref{S-weakcurvature} for details.

Since the analysis of PDEs on manifolds relies on the functional inequalities governing Sobolev spaces, our results indicate that the functional-analytic foundation for studying elliptic and parabolic equations on Finsler manifolds is sensitive to non-Riemannian geometric data. The breakdown of classical inequalities calls for new methods to establish existence, regularity, and stability of solutions to Finslerian PDEs -- a direction ripe for future work.

\smallskip

The paper is organized as follows. Section \ref{prelimain} collects the necessary preliminaries on Finsler geometry and Sobolev spaces. Section \ref{sectionBerwald} studies the geometry of Berwald's metric space and contains the proofs of Theorems \ref{Berwmainth} and \ref{sbeer}. Section \ref{sectionFunk} presents geometric analysis of Funk metric spaces, providing  results parallel to those in Theorem \ref{Berwmainth} and proving Theorem \ref{funkcknf}. In both Sections \ref{sectionBerwald} and \ref{sectionFunk} we avoid Finslerian comparison theory and give direct calculations to keep the presentation self-contained and transparent. Finally, Section \ref{RiccSboundver} treats the general case and establishes Theorem \ref{voluemcompar22}.


\section{Preliminaries}\label{prelimain}

\subsection{Elements from Finsler geometry} In this section, we recall some definitions and properties from Finsler geometry; for details see
Bao--Chern--Shen \cite{BCS}, Ohta \cite{Ohta1} and Shen \cite{ShenSpray,ShenLecture}. Naturally, Einstein's summation convention is used throughout the paper.

%
%
%

Let $V$ be an $n$-dimensional vector space with $n \geq 2$. A {\it Minkowski norm} on $V$ is a nonnegative function $\phi: V \rightarrow \mathbb{R}$ satisfying:
\begin{enumerate}[{\rm (i)}]
\item\label{minkownor1} $\phi(y)\geq 0$ with equality if and only if $y=0$;
\item\label{minkownor2} $\phi$ is positively $1$-homogeneous, i.e., $\phi(\lambda y) = \lambda \phi(y)$ for all $\lambda > 0$;
\item  the Hessian matrix $\left( [\phi^2]_{y^i y^j}(y) \right)$ is positive definite for all $y \in V \backslash\{0\}$.
\end{enumerate}
The pair $(V, \phi)$ is then called a   {\it Minkowski space}.

 For any Minkowski norm $\phi$ on $\mathbb{R}^n$, the following inequality holds (see \cite[(1.2.3)]{BCS}):
\begin{equation}\label{weakswaza}
y^i\frac{\partial\phi}{\partial{y^i}}(x)\leq \phi(y), \qquad \forall y=y^i\frac{\partial}{\partial y^i}, \ x\in \mathbb{R}^n\backslash\{\mathbf{0}\},
\end{equation}
 with equality if and only if $y=\lambda x$ with $\lambda\geq 0$. Here $(y^i)$ are the standard Cartesian coordinates on \( \mathbb{R}^n \).

Let $M$ be an $n$-dimensional connected smooth manifold with tangent bundle $TM$.
 A  {\it Finsler metric} $F = F(x,y)$  is a $C^{\infty}$-function defined on $TM\backslash\{0\}$ such
 that $F(x,\cdot)$ is a  Minkowski  norm on $T_{x}M$ for every $x\in M$. The pair $(M,F)$ is called a  {\it Finsler manifold}. Thus, a Minkowski space $(V,\phi)$ is the simplest model of Finsler manifolds by setting $M:=V$ and $F(x,y):=\phi(y)$.

 The \textit{fundamental tensor} $g_y = (g_{ij}(x, y))$ is defined by
\begin{align}\label{defbasictensor}
    g_{ij}(x, y) := \frac{1}{2} \frac{\partial^2 F^2}{\partial y^i\partial y^j}(x, y), \qquad \forall (x, y) \in TM \backslash \{0\}.
\end{align}
By Euler's theorem, $F^2(x,y)=g_{ij}(x,y)y^iy^j$ for all $(x,y)\in TM\backslash\{0\}$. Note that $g_{ij}$ can be viewed as a local function on $TM\backslash\{0\}$, but it cannot be defined on $y=0$ unless $F$ is Riemannian, in which case $g_{ij}(x,y)$ is independent of $y$.


A distinctive non-Riemannian feature is the possible asymmetry of $F$.
Its {\it reversibility}  is defined by
\begin{equation}\label{defrever}
\lambda_F(M):=\sup_{x\in M}\lambda_F(x) \ \ {\rm with}\ \ \lambda_F(x)= \sup_{y\in T_xM\setminus\{0\}} \frac{F(x,-y)}{F(x,y)},
\end{equation}
 see Rademacher \cite{Rade1,Rade}.   Hence, $\lambda_F(M) =1$ if and only $F $ is {\it reversible}, i.e., $F(x,y)=F(x,-y)$.

The {\it Legendre transformation} $\mathfrak{L} : TM \rightarrow T^*M$ is defined
by
\begin{equation}\label{defleg}
\mathfrak{L}(x,y):=\left \{
\begin{array}{lll}
 g_y(y,\cdot), & \text{ if } (x,y)\in TM\backslash\{0\},  \\[4pt]
 0, & \text{ if } (x,y)=0\in TM,%
\end{array}
\right.
\end{equation}
where $g_y(y,\cdot):=g_{ij}(x,y)y^i{\dd}x^j$.
For a smooth function $f: M \rightarrow \mathbb{R}$, its {\it gradient}  is given by $\nabla f := \mathfrak{L}^{-1}({\dd}f)$; consequently,  ${\dd}f(X) = g_{\nabla f} (\nabla f,X)$ on the set $\{ {\dd}f \neq 0\}$.

The {\it co-metric} (or {\it dual metric}) $F^*$ of $F$ is
defined by
\begin{equation}\label{coFinsler}
F^*(x,\eta):=\underset{y\in T_xM\backslash \{0\}}{\sup}\frac{\langle\eta,y\rangle}{F(x,y)}, \qquad
\forall \eta\in T_x^*M,
\end{equation}
which itself is   a Finsler metric on $T^*M$.  Here  $\langle \eta,y\rangle:=\eta(y)$
 denotes the canonical pairing between $T^*_xM$
and $T_xM$. A  Cauchy--Schwarz type inequality then holds:
\begin{equation}\label{dualff*}
\langle\eta,y\rangle\leq F^*(x,\eta) F(x,y), \qquad  \forall \eta\in T^*_xM,\ y\in T_xM,
\end{equation}
with equality if and only if $\eta=\alpha \mathfrak{L}(x,y)$ for some $\alpha>0$. In particular,
\begin{equation}\label{F*F}
F^*(\mathfrak{L}(x,y))=F(x,y), \qquad \forall (x,y)\in TM,
\end{equation}

A smooth curve $t\mapsto \gamma(t)$ in $M$ is called a (constant-speed) \textit{geodesic} if it satisfies
\[
\frac{{\dd}^2\gamma^i}{{\dd}t^2}+2G^i\left(\gamma,\frac{{\dd}\gamma}{{\dd}t}\right)=0,
\]
where the {\it geodesic coefficient} $G^i$ is  defined by
\begin{align*}
G^i(x,y):=\frac14 g^{il}(x,y)\left\{2\frac{\partial g_{jl}}{\partial x^k}(x,y)-\frac{\partial g_{jk}}{\partial x^l}(x,y)\right\}y^jy^k.
\end{align*}
In this paper, $\gamma_y(t)$ always denotes  a geodesic with $\dot{\gamma}_y(0)=y$.
A Finsler manifold $(M,F)$ is {\it forward complete} if  every geodesic $t\mapsto \gamma(t)$, $0\leq t<1$, can be extended to a geodesic defined on $0\leq t<+\infty$; similarly,  $(M,F)$ is  {\it backward complete} if  every geodesic $t\mapsto \gamma(t)$, $0< t\leq 1$, can be extended to a geodesic defined on $-\infty< t\leq 1$.

The {\it Riemannian curvature} $R_y$ of $F$ is a family of linear transformations on tangent spaces. Explicitly, it is given by
$R_y:=R^i_k(x,y)\frac{\partial}{\partial x^i}\otimes {\dd}x^k$, where
\[
R^i_{\,k}(x,y):=2\frac{\partial G^i}{\partial x^k}-y^j\frac{\partial^2G^i}{\partial x^j\partial y^k}+2G^j\frac{\partial^2 G^i}{\partial y^j \partial y^k}-\frac{\partial G^i}{\partial y^j}\frac{\partial G^j}{\partial y^k}.
\]
For a plane $\Pi:=\text{Span}\{y,v\}\subset T_xM$, the \textit{flag curvature} is defined by
\[
\mathbf{K}(y,\Pi):=\mathbf{K}(y,v):=\frac{g_y\left( R_y(v),v  \right)}{g_y(y,y)g_y(v,v)-g^2_y(y,v)}.
\]
For a Riemannian metric, the flag curvature reduces to the sectional curvature.
A {\it Cartan--Hadamard manifold} is a  simply connected, forward complete Finsler manifold
with non-positive flag curvature.

The {\it Ricci curvature} of $y\in T_xM\backslash\{0\}$ is defined by
\[
\mathbf{Ric}(x,y):=\frac{1}{F^2(x,y)}\sum_{i}\mathbf{K}(y,e_i),
\]
where $\{e_1,\ldots,e_n\}$ is a $g_y$-orthonormal basis of $T_xM$. A Finsler manifold $(M,F)$ is said to {\it satisfy $\mathbf{Ric}\geq  k$} for some $k\in \mathbb{R}$ if
\[
\mathbf{Ric}(x,y)\geq  k F(x,y),\qquad \forall  (x,y)\in TM.
\]

Let $\zeta:[0,1]\rightarrow M$ be a Lipschitz continuous path. Its \textit{length} is defined by
\[
L_F(\zeta):=\int^1_0 F({\zeta}(t),\dot{\zeta}(t))dt.
\]
The associated  {\it distance function} $d_F:M\times M\rightarrow [0,+\infty)$  is defined as
$d_F(x_1,x_2):=\inf L_F(\zeta)$,
where the infimum is taken over all
Lipschitz continuous paths $\zeta:[0,1]\rightarrow M$ with
$\zeta(0)=x_1$ and $\zeta(1)=x_2$. Generally $d_F(x_1,x_2)\neq d_F(x_2,x_1)$  unless $F$ is reversible.

The {\it forward} and {\it backward metric balls} centered at $x\in M$ with radii $R>0$ are defined respectively by
\begin{equation}\label{definball}
B^+_R(x):=\{z\in M:\, d_F(x,z)<R\},\qquad B^-_R(x):=\{z\in M:\, d_F(z,x)<R\}.
\end{equation}
The closure of a forward (resp., backward) metric ball with finite radius is compact if $(M,F)$ is forward (resp., backward) complete.

Let $\mathfrak{m}$ be a smooth positive measure on $M$; in a local coordinate system $(x^i)$ we
express ${\dd}\mathfrak{m}=\sigma(x){\dd}x^1\wedge\ldots\wedge {\dd}x^n$. In particular,
the {\it Busemann--Hausdorff measure} $\mathfrak{m}_{BH}$ is defined by
\begin{align*}
{\dd}\mathfrak{m}_{BH}:=\frac{\omega_n}{\vol(B_xM)}{\dd}x^1\wedge\ldots\wedge {\dd}x^n,
 \end{align*}
where $B_xM:=\{y\in T_xM: F(x,y)<1\}$ and  $\omega_n:=\vol(\mathbb{B}^n_1)$ is the volume of Euclidean $n$-dimensional unit ball. In the Riemannian setting, $\mathfrak{m}_{BH}$  coincides with the canonical Riemannian measure.

For convenience, a triple $(M,F,\m)$ is called a {\it Finsler metric measure manifold} ($\FMMM$) if $(M,F)$ is a forward complete Finsler manifold endowed with a smooth positive measure $\m$.

Let $(M,F,\m)$ be a $\FMMM$.
Define the \textit{distortion} as
\begin{equation*}
\tau(x,y):=\log \frac{\sqrt{\det g_{ij}(x,y)}}{\sigma(x)}, \qquad  \forall y\in T_xM\backslash\{0\},
\end{equation*}
and the \textit{$S$-curvature} as
\begin{equation}\label{Scurvaturedef}
\mathbf{S}(x,y):=\left.\frac{\dd}{{\dd}t}\right|_{t=0} \tau(\gamma_y(t),\dot{\gamma}_y(t)).
\end{equation}
Equivalently (cf.~Shen \cite{ShenLecture}),
\begin{equation}\label{Scurvature}
\mathbf{S}(x,y) = \frac{\partial G^i}{\partial  y^i }(x,y) - y^i \frac{\partial}{\partial x^i} \ln \sigma (x).
\end{equation}
 And $(M,F,\m)$ is said to  {\it satisfy $\mathbf{S}\geq  h$} for some $h\in \mathbb{R}$ if
\[
\mathbf{S}(x,y)\geq  h \,F(x,y), \qquad \forall (x,y)\in TM\backslash\{0\}.
\]

The remainder of this subsection is devoted to the polar coordinate system in the Finsler setting.
Fix a point $o\in M$ and set  $S_o M:=\{ z\in T_oM\,:\, F(o,z)=1\}$.
The {\it cut value} $i_y$ of $y \in S_oM$ and the {\it injectivity radius} $\mathfrak{i}_o$ at $o$ are defined by
\begin{align*}
	i_y := \sup \{ t>0: \text{the geodesic }  \gamma_y|_{[0,t]}   \text{ is globally minimizing} \}, \qquad
	\mathfrak{i}_o  := \inf_{y \in S_oM} i_y>0,
\end{align*}
where $\gamma_y(t)$ is a (unit-speed) geodesic with $\dot{\gamma}_y(0)=y$.

Let $(r,y)$ be the {\it polar coordinate system} around some point $o\in M$. That is, if $x=(r,y)$, then $r=r(x)=d_F(o,x)$ and $y=y(x)\in S_oM$ such that
 \begin{equation}\label{geommeaingofr}
r\leq i_y,\quad\gamma_y(r)=(r,y),\quad \dot{\gamma}_y(r)=\nabla r|_{(r,y)}.
\end{equation}
Moreover,
it follows from  \cite[Lemma 3.2.3]{ShenLecture} that $F(\nabla r) = F^{*}({\dd}r) =1$ for  $\m$-a.e. $x\in M$.

In the polar coordinate system around $o$, the measure $\m$ decomposes as
\begin{equation*}
\dm|_{(r,y)}=:\hat{\sigma}_o(r,y)\,{\dd}r \wedge {\dd}\nu_o(y),
\end{equation*}
where ${\dd}\nu_o$ denotes the Riemannian volume form of the indicatrix $S_oM$. Consequently,
\begin{equation}\label{inffexprexx}
\int_M f \dm=\int_{S_oM} {\dd}\nu_o(y) \int_0^{i_y} f(r,y) \,\hat{\sigma}_o(r,y)\,{\dd}r,\quad \forall  f\in L^1(M,\m).
\end{equation}
For convenience,  we also introduce the
 {\it integral of distortion} (cf.~Huang--Krist\'aly--Zhao \cite{HKZ})
\begin{equation}\label{cocont}
\mathscr{I}_{\m}(o):=\int_{S_oM} e^{-\tau(o,y)}{\dd}\nu_o(y)<+\infty.
\end{equation}

To state the volume comparison result of Zhao--Shen \cite[Theorem 3.6]{ZS}, we first set
\[
 \mathfrak{s}_k(t):=\left\{
	\begin{array}{lll}
		\frac{\sin(\sqrt{k}t)}{\sqrt{k}}, && \text{ if }k>0,\\[3pt]
		\ \ \ \ t, && \text{ if }k=0,\\
		\frac{\sinh(\sqrt{-k }t)}{\sqrt{-k}}, && \text{ if }k<0.
	\end{array}
	\right.
\]
\begin{theorem}[\cite{ZS}]\label{bascivolurcompar} Let $(M,F,\m)$ be an $n$-dimensional $\FMMM$. If $\mathbf{Ric}\geq (n-1)k$ for some $k \in \mathbb{R}$, then for every $o\in M$ and $y\in  S_oM$, the function
\begin{equation} \label{densesim}
H_y(r):=\frac{\hat{\sigma}_o(r,y)}{ e^{-\tau\big(\gamma_y(r),\dot{\gamma}_y (r)\big)}  \mathfrak{s}_k^{n-1}(r)},\qquad \forall  r\in (0,i_y),
\end{equation}
is monotonically decreasing in r and satisfies $\lim_{r \to 0^+} H_y(r) = 1$,
where $(r,y)$ is the polar coordinate system around $o$ and $\gamma_y(t)$ is a geodesic with $\dot{\gamma}_y(0)=y$.
\end{theorem}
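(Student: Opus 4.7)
The plan is to adapt the classical Bishop--Gromov volume comparison argument to the Finsler weighted setting by combining a matrix Riccati analysis of Jacobi fields along $\gamma_y$ with the identity $\partial_r\tau(\gamma_y(r),\dot\gamma_y(r))=\mathbf{S}(\dot\gamma_y(r))$, which is immediate from \eqref{Scurvaturedef}. The whole point is to recognise $H_y$ as the ratio of a ``pure Jacobi determinant'' along $\gamma_y$ to its model-space counterpart, at which point the monotonicity reduces to a scalar Riccati comparison.

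First I would set up polar coordinates and identify the density. Fix $y\in S_oM$, take the unit-speed geodesic $\gamma_y$, and introduce a $g_{\dot\gamma_y}$-parallel orthonormal frame $\{e_1(r),\ldots,e_{n-1}(r),\dot\gamma_y(r)\}$ along $\gamma_y$. Let $J_i(r)$ be the Jacobi fields with $J_i(0)=0$ and with Finslerian covariant derivative $e_i(0)$ at $r=0$, and set
\[
A(r):=\bigl(g_{\dot\gamma_y(r)}(J_i(r),e_j(r))\bigr)_{i,j=1}^{n-1}.
\]
A change-of-variable computation with the exponential map at $o$, together with the definitions of $\hat\sigma_o$, of $\tau$, and of $d\nu_o$ as the $g_o$-Riemannian volume form on $S_oM$, yields
\[
\hat\sigma_o(r,y)\,e^{\tau(\gamma_y(r),\dot\gamma_y(r))}=\det A(r).
\]
Consequently $H_y(r)=\det A(r)/\mathfrak{s}_k^{n-1}(r)$, so it suffices to prove that this ratio is monotonically decreasing and tends to $1$ as $r\to 0^+$.

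Next I would derive and exploit the Riccati inequality. The Finsler Jacobi equation gives $A''+R\,A=0$, where $R(r)$ is the matrix of the operator $R_{\dot\gamma_y(r)}$ restricted to the $g_{\dot\gamma_y(r)}$-orthogonal complement of $\dot\gamma_y(r)$. Setting $U(r):=A'(r)A(r)^{-1}$ gives $U'+U^2+R=0$. Taking traces, using the Cauchy--Schwarz bound $\operatorname{tr}(U^2)\geq(\operatorname{tr} U)^2/(n-1)$, and invoking $\operatorname{tr} R=\mathbf{Ric}(\dot\gamma_y)\geq (n-1)k$, I obtain
\[
(\operatorname{tr} U)'\leq -\frac{(\operatorname{tr} U)^2}{n-1}-(n-1)k.
\]
The model function $v_k(r):=(n-1)\mathfrak{s}_k'(r)/\mathfrak{s}_k(r)$ realises equality in this differential inequality and has the same $(n-1)/r$ singularity at $0$ as $\operatorname{tr} U$, so a standard scalar Riccati comparison yields $\operatorname{tr} U(r)\leq v_k(r)$. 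Recognising $\operatorname{tr} U=(\ln\det A)'$ and $v_k=(\ln\mathfrak{s}_k^{n-1})'$ and integrating shows that $H_y$ is monotonically decreasing. The normalisation $J_i(0)=0$, $J_i'(0)=e_i(0)$ gives the Taylor expansion $A(r)=rI_{n-1}+O(r^3)$, and combined with $\mathfrak{s}_k(r)=r+O(r^3)$ this yields $\lim_{r\to 0^+}H_y(r)=1$.

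The main obstacle is the Finslerian bookkeeping: one must check carefully that the $g_{\dot\gamma_y}$-parallel frame, the restriction of $R_{\dot\gamma_y}$ to the $g_{\dot\gamma_y}$-normal bundle, and the identity $\hat\sigma_o\,e^\tau=\det A$ are all consistent with the Finsler covariant derivative along $\gamma_y$ and with the definition \eqref{Scurvature} of the $S$-curvature. Once this algebraic identification is in place, the argument reduces to the familiar scalar Riccati comparison and the rest is routine.
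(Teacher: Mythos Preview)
The paper does not prove this statement; it is quoted from Zhao--Shen \cite{ZS} as a known volume comparison result. Your proposal follows precisely the route taken there (and in Shen's \cite{ShenLecture}): identify $\hat\sigma_o\,e^{\tau}$ with the Jacobi determinant $\det A$ along $\gamma_y$, derive the matrix Riccati equation $U'+U^2+R=0$ for $U=A'A^{-1}$, take traces, and compare with the model function $(n-1)\mathfrak{s}_k'/\mathfrak{s}_k$. The argument is correct.

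One point you flag only vaguely under ``Finslerian bookkeeping'' deserves to be made explicit: the trace inequality $\operatorname{tr}(U^2)\geq(\operatorname{tr} U)^2/(n-1)$ requires $U$ to be symmetric. This hinges on two facts specific to the Finsler setting: first, that $R_{\dot\gamma_y}$ is $g_{\dot\gamma_y}$-self-adjoint (so that $R$ is a symmetric matrix in your parallel orthonormal frame); and second, a Wronskian argument showing $A^{\mathsf T}A'-(A')^{\mathsf T}A$ is constant along $\gamma_y$ and vanishes at $r=0$ because $A(0)=0$. Both are standard (see \cite{ShenLecture,ZS}), but they are exactly the places where the Finsler proof is not a verbatim copy of the Riemannian one, so it is worth naming them rather than leaving them implicit.
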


\begin{remark}\label{remaep} Since $\lim_{r \to 0^+} H_y(r) = 1$ and $S_oM$ is compact, there exists $\epsilon\in (0,\min\{1,\mathfrak{i}_o\})$ such that
\begin{equation}\label{limcaes}
2^{-1} e^{-\tau(o,y)}r^{n-1}\leq \hat{\sigma}_o(r,y)\leq 2 e^{-\tau(o,y)}r^{n-1}, \quad \forall r\in (0,\epsilon), \ y\in S_oM.
\end{equation}
 \end{remark}

\subsection{Sobolev spaces induced by Finsler structure}

\begin{definition}\label{soboloevespace}Let $(M,F,\m)$ be  an  $\FMMM$.
For $u\in C^\infty_0(M)$ and $p \in [1,+\infty)$, define a pseudo-norm as
\begin{equation}\label{W1p}
\|u\|_{W^{1,p}_{\m}}:=\left(\int_M |u|^p \dm \right)^{1/p}+\left(\int_M F^{*p}({\dd} u)  \dm  \right)^{1/p}.
\end{equation}
The  {\it  Sobolev space} $W^{1,p}_0(M,F,\m)$ is defined as the closure of $C^\infty_0(M)$ with respect to the  backward topology induced by $\|\cdot\|_{W^{1,p}_{\m}}$, i.e.,
\[
u\in W^{1,p}_0(M,F,\m) \quad \Longleftrightarrow \quad \exists \, (u_k) \subset C^\infty_0(M) \text{ \ with }\lim_{k\rightarrow \infty}\|u-u_k\|_{W^{1,p}_{\m}}= 0.
\]
\end{definition}

\begin{remark}\label{sobonormprop}If a function $f$ is differentiable at $x\in M$, then  the definition of gradient and \eqref{F*F} imply
\begin{equation}\label{granotwoff}
F^*(x,{\dd}f)=F(x,\nabla f).
\end{equation}
Hence, $W^{1,p}_0(M,F,\m)$ admits an equivalent characterization in terms of  the gradient, i.e.,
\[
\|u\|_{W^{1,p}_{\m}}=\left(\int_M |u|^p \dm \right)^{1/p}+\left(\int_M F^{p}(\nabla u)  \dm  \right)^{1/p}.
\]
Consequently,  $W^{1,p}_0(M,F,\m)$ is the standard Sobolev space when $F$ is Riemannian.
In the general Finsler setting,
however, since the gradient is nonlinear ($\nabla(u_1+u_2)\neq \nabla u_1+ \nabla u_2$), it is therefore
 more convenient to define Sobolev spaces via the differential.
The triangle inequality for the co-metric $F^*$ implies that for any $u,v\in  W^{1,p}_0(M,F,\m)$:
 \begin{enumerate}[\rm (i)]
\item $\|u\|_{W^{1,p}_{\m}}\geq 0$ with equality if and only if $u=0$;

\item $\|\lambda \,u\|_{W^{1,p}_{\m}}= \lambda \|u\|_{W^{1,p}_{\m}}$ for any $\lambda\geq 0$;

\item $\|u+v\|_{W^{1,p}_{\m}}\leq \|u\|_{W^{1,p}_{\m}}+\|v\|_{W^{1,p}_{\m}}$.

\end{enumerate}
\end{remark}

\begin{remark}\label{sobondistance}
The pseudo-norm $\|\cdot\|_{W^{1,p}_{\m}}$ induces a ``metric" on $W^{1,p}_0(M,F,\m)$ by
$\mathsf{D}_p(u,v):=\|v-u\|_{W^{1,p}_{\m}}$.
Since $\mathsf{D}_p$ could be not symmetric, one defines  {\it forward} and  {\it backward balls} in the same way as \eqref{definball}.
The {\it forward topology} $\mathcal{T}_+$ (resp.,\ {\it backward topology} $\mathcal{T}_-$) is the topology
generated by forward balls (resp.,\ backward balls).
Consequently, a sequence $(u_k)\subset W^{1,p}_0(M,F,\m)$ is convergent to $u\in W^{1,p}_0(M,F,\m)$ under $\mathcal{T}_+$ (resp.,\ $\mathcal{T}_-$) if $ \mathsf{D}_p(u,u_k)\rightarrow 0$ (resp.,\ $\mathsf{D}_p(u_k,u)\rightarrow 0$).
In particular, $W^{1,p}_0(M,F,\m)$  is constructed by the backward topology $\mathcal{T}_-$.
\end{remark}

\begin{definition}
Given a set $U\subset M$, a function $f:U\rightarrow \mathbb{R}$ is called {\it Lipschitz} if there exists a constant $C\geq 0$ such that
\[
|f(x_1)-f(x_2)|\leq  C\,d_F(x_1,x_2),\quad \forall x_1,x_2\in U.
\]
The minimal $C$ satisfying the above inequality is called the {\it dilatation} (or {\it Lipschitz constant}) of $f$, denoted by $\dil(f)$. 
\end{definition}


Let $\Lip_0(U):=C_0(U)\cap \Lip(U)$ denote the collection of compactly supported Lipschitz functions on $U$.
The following approximation  result is standard; a proof can be found in Krist\'aly--Li--Zhao \cite[Section 3]{KLZ}.


\begin{lemma}\label{lipsconverppax}
Let $(M,F,\m)$ be an $\FMMM$. For every $u\in \Lip_0(M)$,   there exists a sequence of smooth Lipschitz functions $(u_k)\subset C^\infty_0(M)\cap \Lip_0(M)$  such that
\begin{align*}
&\supp u\cup \supp u_k\subset \mathscr{K},\quad |u_k|\leq C,\quad F^*({\dd} u_k)\leq   \dil(u_k)\leq C,\\
&\quad \sup_{ M}|u_k- u|\rightarrow 0,\qquad \|u_k-u \|_{W^{1,p}_{\m}}+\|u-u_k \|_{W^{1,p}_{\m}}\rightarrow 0,
\end{align*}
where $\mathscr{K}\subset M$ is a compact set  and $C>0$ is  a constant. In particular, if $u$ is nonnegative, so are $u_k$'s.
\end{lemma}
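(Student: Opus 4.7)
The plan is to construct $u_k$ by a standard chart-mollification procedure and then to verify that the resulting sequence is compatible with the (possibly asymmetric) co-metric $F^*$. First, I would cover the compact set $\supp u$ by finitely many coordinate charts $(U_\alpha,\varphi_\alpha)$ with bounded images in $\mathbb{R}^n$, and fix a smooth partition of unity $\{\rho_\alpha\}$ subordinate to a slightly shrunk subcover so that $u=\sum_\alpha \rho_\alpha u$ with each summand compactly supported in $U_\alpha$. For $\varepsilon_k\downarrow 0$, I set
\[
u_k:=\sum_\alpha \Big[\big((\rho_\alpha u)\circ\varphi_\alpha^{-1}\big)\ast\eta_{\varepsilon_k}\Big]\circ\varphi_\alpha,
\]
where $\eta_{\varepsilon_k}$ is a standard Euclidean mollifier. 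For large $k$ the supports lie in a common compact set $\mathscr{K}\subset M$ slightly larger than $\supp u$, and $u_k\in C^\infty_0(M)$; nonnegativity of $u$ is preserved since mollification of nonnegative functions is nonnegative.

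Next I would establish the uniform bounds. The pointwise bound $|u_k|\le \|u\|_\infty$ is immediate because $u_k$ is locally a weighted average of values of $u$. On the compact set $\mathscr{K}$ the Euclidean coordinate norm and the Finsler norm $F$ are mutually bounded, and in particular $\lambda_F(\mathscr{K}):=\sup_{x\in \mathscr{K}}\lambda_F(x)<+\infty$ by \eqref{defrever} and continuity. Since mollification does not increase Euclidean Lipschitz constants, this comparison gives a uniform bound $\dil(u_k)\le C$ in the $F$-metric. The inequality $F^*({\rm d}u_k)\le \dil(u_k)$ then follows from \eqref{coFinsler} by the mean value theorem applied along unit-speed $F$-geodesics. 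The uniform convergence $\sup_M|u_k-u|\to 0$ is the classical mollification property for compactly supported continuous functions.

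To obtain norm convergence in both directions, note that by Rademacher's theorem in charts, ${\rm d}u$ exists $\mathfrak{m}$-a.e. At each Lebesgue point of ${\rm d}u$ the standard gradient-convergence property of mollifiers yields ${\rm d}u_k(x)\to {\rm d}u(x)$. Continuity of $F^*$ on $T^*M$ then implies $F^*({\rm d}(u_k-u))\to 0$ and $F^*({\rm d}(u-u_k))=F^*(-{\rm d}(u_k-u))\to 0$ pointwise $\mathfrak{m}$-a.e.\ on $\mathscr{K}$. Both quantities are uniformly bounded on $\mathscr{K}$ by $\dil(u_k)+\lambda_F(\mathscr{K})\dil(u)\le C'$, which is $\mathfrak{m}$-integrable since $\mathfrak{m}(\mathscr{K})<+\infty$. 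Dominated convergence then delivers $\int_M F^{*p}({\rm d}(u_k-u))\,\dm\to 0$ and $\int_M F^{*p}({\rm d}(u-u_k))\,\dm\to 0$; combined with $L^p$-convergence of $u_k\to u$ (via uniform convergence on $\mathscr{K}$) this yields $\|u_k-u\|_{W^{1,p}_\mathfrak{m}}+\|u-u_k\|_{W^{1,p}_\mathfrak{m}}\to 0$.

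The delicate point is the asymmetry: since $F^*(-\xi)\ne F^*(\xi)$ in general, one cannot reduce the backward-direction convergence to the forward one and must treat both directions simultaneously. This is resolved by the local reversibility bound $F^*(x,-\xi)\le \lambda_F(x)F^*(x,\xi)$ valid on the compact set $\mathscr{K}$, which provides a common integrable dominant so that a single application of dominated convergence handles both directions at once.
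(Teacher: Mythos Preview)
The paper does not give its own proof of this lemma; it simply states that the result is standard and refers to Krist\'aly--Li--Zhao \cite[Section~3]{KLZ}. Your chart-mollification/partition-of-unity argument is the natural approach and is essentially correct: the uniform bounds, the a.e.\ convergence $du_k\to du$, and the dominated-convergence step all go through as you describe, and the key Finsler-specific inequality $F^*({\rm d}u_k)\le \dil(u_k)$ is exactly the one proved in the paper's Lemma~\ref{funfiln}.

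Two minor imprecisions worth noting (neither is a genuine gap). First, with a partition of unity the claim ``$|u_k|\le\|u\|_\infty$ is immediate because $u_k$ is locally a weighted average of values of $u$'' is not quite right: you are mollifying $\rho_\alpha u$, not $u$, and summing, so the sharp bound $\|u\|_\infty$ need not survive; but $|u_k|\le C$ for some $C$ depending only on the cover certainly holds, and that is all the lemma asks. Second, the reversibility factor $\lambda_F(\mathscr{K})$ in your dominant is unnecessary: since $\dil(-f)=\dil(f)$, one has $F^*(-du)\le\dil(u)$ directly, so the simpler bound $F^*({\rm d}(u_k-u))\le \dil(u_k)+\dil(u)$ already works in both directions.
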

\begin{remark} The last convergence in Lemma \ref{lipsconverppax} is equivalent to \(\|u_k-u\|_{W^{1,p}_{\m}}\to0\). Indeed,
\[
\|u_k-u\|_{W^{1,p}_{\m}}\leq
\|u_k-u\|_{W^{1,p}_{\m}}+\|u-u_k\|_{W^{1,p}_{\m}}\leq
2\lambda_F(\mathscr{K})\,\|u_k-u\|_{W^{1,p}_{\m}}.
\]
\end{remark}


The local equivalence of $d_F$ to the Euclidean metric \cite[(6.2.3)]{BCS} guarantees the validity of Rademacher's theorem in Finsler geometry; thus every Lipschitz function is differentiable almost everywhere, which motivates the next definition.

\begin{definition}\label{function11}
Let  $(M,F,\m)$ be an $n$-dimensional $\FMMM$. Fix a point $o\in M$ and let $r(x):=d_F(o,x)$ denote the distance function from $o$.
Define three functionals on $\Lip_0(M)\backslash\{0\}$ as follows:
\begin{enumerate}[{\rm (i)}]
\item  for  $p\in (1, n)$, set
\[
\mathscr{H}_{o,p}(f):=\frac{ \int_M {F}^{*p}({\dd}f) \dm}{ \int_M {|f|^p}{r^{-p}}\dm};
\]

\item for  $1-p<s\leq 1<p<n$, set
\[
\mathscr{U}_{o,p,s}(f):=\frac{\Big(  \int_{M} F^{*p}({\rm d} f) \dm \Big)^{1/p}\left(  \int_{M}|f|^p r^{p's} \dm \right)^{1/{p'}} }{  \int_{M} |f|^p r^{s-1} \dm};
\]

\item  for   $1<p<m$ and $p(n+s-1)>m(n-p)>0$, set
\[
\mathscr{C}_{o,p,m,s}(f):= \frac{\Big( \int_{{M}}F^{*p}({\rm d} f) {\dm}   \Big)^{1/p}\left( \int_{{M}}  {|f|^{p'(m-1)}}{r^{p's}}{\dm}   \right)^{1/p'}  }{ \int_{{M}}  {|f|^{m}}{r^{s-1}}{\dm}  }.
\]
\end{enumerate}
\end{definition}

\begin{lemma}\label{funfiln}
Let $(M,F,\m)$ be an $n$-dimensional $\FMMM$  and let $\mathbf{J}$ denote any of  the functionals  $\mathscr{H}_{o,p}$, $\mathscr{U}_{o,p,s}$ or $\mathscr{C}_{o,p,m,s}$  from Definition \ref{function11}. Then $\mathbf{J}$ is well-defined on $\Lip_0(M)\backslash\{0\}$, and
\begin{align}\label{samecontrl}
\inf_{f\in C^\infty_0(M)\backslash\{0\}}\mathbf{J}(f)=\inf_{f\in \Lip_0(M)\backslash\{0\}}\mathbf{J}(f).
\end{align}
\end{lemma}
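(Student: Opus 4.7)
The plan splits naturally into two parts: (a) checking that the three functionals in Definition \ref{function11} make sense on $\Lip_0(M)\setminus\{0\}$, and (b) upgrading the infimum from $C^\infty_0$ to $\Lip_0$ via a smooth approximation.

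\textbf{Well-definedness.} For any $f\in\Lip_0(M)\setminus\{0\}$, Rademacher's theorem (valid on $(M,F)$ since $d_F$ is locally equivalent to the Euclidean distance) yields $F^*({\dd}f)\leq C$ almost everywhere, with $f$ bounded and supported in some compact $\mathscr{K}$; hence the gradient integrals $\int_M F^{*p}({\dd}f)\dm$ are finite. The weighted integrals require more care near $o$. Write the measure in polar coordinates and invoke Remark \ref{remaep} to obtain $\hat\sigma_o(r,y)\lesssim e^{-\tau(o,y)}r^{n-1}$ on a small geodesic ball, while away from $o$ the integrands are trivially bounded by compactness of $\mathscr{K}$. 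Thus the relevant integrability conditions reduce to the explicit exponent inequalities $n-p-1>-1$ (for $r^{-p}\dm$), $n+s-2>-1$ (for $r^{s-1}\dm$), and $n+p's-1>-1$ (for $r^{p's}\dm$). The first holds because $p<n$; the second follows from $s>1-p>1-n$; and for the third I will check that the hypothesis $p(n+s-1)>m(n-p)$ together with $m>p>1$ implies $s>-n(p-1)/p=-n/p'$, which is exactly the integrability threshold. The denominators are then strictly positive because $f\not\equiv 0$ and $\m$ is a positive measure on an open set where the weight is positive a.e.

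\textbf{Equality of infima.} The inclusion $C^\infty_0(M)\subset\Lip_0(M)$ gives $\inf_{C^\infty_0}\mathbf{J}\geq\inf_{\Lip_0}\mathbf{J}$. For the reverse, fix $f\in\Lip_0(M)\setminus\{0\}$ and apply Lemma \ref{lipsconverppax} to produce $(u_k)\subset C^\infty_0(M)\cap\Lip_0(M)$ with common compact support $\mathscr{K}$, uniform bounds $|u_k|\leq C$ and $F^*({\dd}u_k)\leq C$, uniform convergence $u_k\to f$, and the two-sided convergence $\|u_k-f\|_{W^{1,p}_{\m}}+\|f-u_k\|_{W^{1,p}_{\m}}\to 0$. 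For the weighted $|u_k|^p$ (and $|u_k|^m$, $|u_k|^{p'(m-1)}$) integrals I will invoke Lebesgue dominated convergence: the integrands are pointwise-convergent and dominated by a constant times $\chi_\mathscr{K}$ multiplied by the corresponding weight, whose integrability was established in the previous paragraph.

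\textbf{Main obstacle: the gradient term.} The only delicate piece is proving $\int_M F^{*p}({\dd}u_k)\dm\to\int_M F^{*p}({\dd}f)\dm$, since the asymmetry of $F^*$ forbids simply writing $F^*({\dd}u_k)-F^*({\dd}f)=F^*({\dd}(u_k-f))$. I will use the two triangle inequalities
\[
F^*({\dd}u_k)\leq F^*({\dd}f)+F^*({\dd}u_k-{\dd}f),\qquad F^*({\dd}f)\leq F^*({\dd}u_k)+F^*({\dd}f-{\dd}u_k),
\]
together with Minkowski's inequality in $L^p(\m)$, to obtain
\[
\bigl|\|F^*({\dd}u_k)\|_{L^p}-\|F^*({\dd}f)\|_{L^p}\bigr|\leq \|F^*({\dd}u_k-{\dd}f)\|_{L^p}+\|F^*({\dd}f-{\dd}u_k)\|_{L^p},
\]
and the right-hand side tends to zero precisely because Lemma \ref{lipsconverppax} provides \emph{both} $\|u_k-f\|_{W^{1,p}_{\m}}\to0$ \emph{and} $\|f-u_k\|_{W^{1,p}_{\m}}\to0$. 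Combining this convergence with the dominated-convergence step above yields $\mathbf{J}(u_k)\to\mathbf{J}(f)$ for each of the three functionals, so $\inf_{C^\infty_0}\mathbf{J}\leq\mathbf{J}(f)$; taking the infimum over $f\in\Lip_0(M)\setminus\{0\}$ proves \eqref{samecontrl}.
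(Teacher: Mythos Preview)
Your proposal is correct and follows essentially the same approach as the paper's proof: both use the polar-coordinate density estimate from Remark \ref{remaep} to control the weighted integrals near $o$, and both rely on Lemma \ref{lipsconverppax} for the approximation step. Your write-up is in fact more explicit than the paper's, which treats only $\mathscr{H}_{o,p}$ in detail and dismisses the equality \eqref{samecontrl} in one line; in particular, your handling of the gradient term via the two-sided triangle inequality for $F^*$ (exploiting that Lemma \ref{lipsconverppax} supplies convergence in \emph{both} directions of the asymmetric Sobolev pseudo-norm) makes transparent a point the paper leaves implicit.
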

\begin{proof}
Given $f\in \Lip_0(M) \backslash\{0\}$, a direct calculation combined with \eqref{dualff*} yields
\begin{align*}
\limsup_{z\rightarrow x}\frac{|f(z)-f(x )|}{d_F(x ,z)}=\sup_{y\in S_xM}\left(\lim_{t\rightarrow 0^+}\frac{|f(\gamma_y(t))-f(\gamma_y(0))|}{t}\right)=\sup_{y\in S_xM}|\langle {\dd}f, y\rangle|=\max\{ F^*(x,\pm {\dd}f) \},
\end{align*}
where $\gamma_y(t)$ denotes a unit-speed geodesic from $x$ with $\dot{\gamma}_y(0)=y$.
Hence,
\[
F^*(x,{\dd}f)\leq \limsup_{z\rightarrow x}\frac{|f(z)-f(x )|}{d_F(x ,z)}\leq \dil(f), \qquad \text{for $\m$-a.e. $x\in M$},
\]
which together with the compactness of $\supp(f)$ yields  $\int_M {F}^{*p}({\dd}f) \dm<+\infty$. Choose $R>1$ such that $\supp(f)\subset B^+_R(o)$. Note that $\overline{B^+_R(o)}$ is compact since $(M,F)$ is forward complete. Choose a small $\epsilon>0$ as in Remark \ref{remaep}. Then, by  \eqref{inffexprexx}--\eqref{limcaes} we have
\begin{align}
0<\int_M {|f|^p}{r^{-p}}\dm&\leq \max|f| \int_{B^+_R(o)} r^{-p}\dm\leq \max|f| \left( \int_{B^+_{\epsilon}(o)}+\int_{B^+_R(o)\backslash B^+_{\epsilon}(o)} r^{-p}\dm   \right)\notag\\
&\leq  \max|f| \left( 2 \mathscr{I}_{\m}(o) \int^\epsilon_0 r^{n-p-1}{\dd}r  +\epsilon^{-p}\m\left[B^+_R(o)\right]  \right)<+\infty.\label{howtcollr}
\end{align}
Thus, $\mathscr{H}_{o,p}$ is well-defined; the other two functionals are treated similarly. Equality \eqref{samecontrl} follows from Lemma \ref{lipsconverppax} and an estimate analogous  to \eqref{howtcollr}.
\end{proof}

\section{ Berwald's  metric space}\label{sectionBerwald}
This section investigates the Finsler manifold introduced by Berwald \cite{Be1,Be2}, which arose from Hilbert's fourth problem. Subsection \ref{subsectionBasicBerwald} presents its basic geometric properties, while Subsection \ref{Berwald_subsec2} establishes the nonlinearity of its Sobolev spaces and the failure of several classical functional inequalities. These results collectively provide the proofs of Theorems \ref{Berwmainth} and \ref{sbeer}. All arguments in this and the following section proceed by direct computation, which yields stronger and more explicit results than would be possible by invoking advanced tools such as volume comparison theorems.

\subsection{Background of Berwald's  metric space}\label{subsectionBasicBerwald}

\begin{definition}[\cite{Be1, Be2}] Let $\mathbb{B}^n_1(\mathbf{0})$ be the Euclidean unit ball centered at $\mathbf{0}$ in $\mathbb{R}^n$.  {\it  Berwald's metric} is defined by
\begin{equation}\label{Berwaldmetric}
\B(x,y):= \frac{ (\sqrt{ (1-|x|^2) |y|^2 + \langle x, y \rangle^2} +\langle x, y \rangle )^2}{ (1-|x|^2)^2 \sqrt{ (1-|x|^2) |y|^2 + \langle x, y \rangle^2} }, \qquad \forall (x,y)\in T\mathbb{B}^n_1(\mathbf{0}).
\end{equation}
The pair $(\mathbb{B}^n_1(\mathbf{0}),\B)$ is called  {\it Berwald's metric space}.
\end{definition}

\begin{remark}
According to \cite{BCS,ShenLecture}, a Finsler metric  is said to be {\it of  Berwald type} if the Chern connection can be viewed as a linear connection on the underlying manifold; such metrics form an intermediate class between Riemannian and general Finsler structures.
The metric $\B$ defined in \eqref{Berwaldmetric}, however, is not of Berwald type. In what follows, the term ``Berwald's metric" will always refer to the metric given by \eqref{Berwaldmetric}.
\end{remark}

The manifold $(\mathbb{B}^n_1(\mathbf{0}),{\B})$ is {\it projectively flat}: the image of every geodesic is a straight line (cf.~\cite{ShenSpray,Sh1}). Its geodesic coefficients are given by $G^i(x,y) = P(x,y) y^i$ with
\begin{equation}\label{BerwaldP}
P(x,y) = \frac{ \sqrt{ (1-|x|^2) |y|^2 + \langle x, y \rangle^2} +\langle x, y \rangle }{ 1-|x|^2 }.
\end{equation}

\begin{theorem} \label{Berwalddist}
Let $(\mathbb{B}^n_1(\mathbf{0}), \B, \mathscr{L}^n)$  be Berwald's  metric space endowed with the Lebesgue measure. The following statements are true:
\begin{enumerate}[\rm (i)]
\item\label{Berwaddist} the distance functions ${d}_{\B}(\mathbf{0},x)$ and ${d}_{\B}(x,\mathbf{0})$ are given by
\begin{equation}\label{Berwaldd0xdx0}
d_{\B} (\mathbf{0},x) = \frac{|x|}{1 - |x|}, \qquad   d_{\B}(x,\mathbf{0}) =  \frac{|x|}{1 + |x|};
\end{equation}

\item\label{Berwald_ii} the Finsler manifold $(\mathbb{B}^n_1(\mathbf{0}), \B)$ is forward complete (but not backward complete) with
 \[ \lambda_{\B}(x)= \left(  \frac{1+ |x|}{1-|x|}  \right)^2, \qquad
  \lambda_{\B}\left(\mathbb{B}^n_1(\mathbf{0})\right)=+\infty; \]

  \item\label{Berwald_iv}
the gradient of the distance function $ r(x): = d_{\B} (\mathbf{0},x)$  is given by
\begin{equation}\label{LemmaBerwaldGradr}
\nabla r(x) =  \frac{(1-|x|)^2}{|x|} x;
\end{equation}

\item\label{Berwald_iii}  $(\mathbb{B}^n_1(\mathbf{0}), \B, \mathscr{L}^n)$ is a Cartan--Hadamard manifold with  $\mathbf{K}=0$ and $\mathbf{S}\in (0,2(n+1))$. In particular,
\begin{equation}\label{minSber}
\mathbf{S}(x,\nabla r)=\frac{n+1}{1+r(x)}.
\end{equation}

\end{enumerate}
\end{theorem}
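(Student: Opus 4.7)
The plan is to exploit projective flatness throughout, working with the auxiliary quantities $Q(x,y):=\sqrt{(1-|x|^2)|y|^2+\langle x,y\rangle^2}$ and $P$ from \eqref{BerwaldP}, in terms of which $\B=(Q+\langle x,y\rangle)^2/[(1-|x|^2)^2\,Q]$. Since the spray satisfies $G^i=P(x,y)\,y^i$, every geodesic traces a straight segment of $\mathbb{R}^n$, which converts each length, flag-curvature, and $S$-curvature evaluation into elementary differentiation.

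For (i), I would parameterise the outward radial segment from $\mathbf{0}$ to $x$ as $\gamma(t)=tx$, $t\in[0,1]$; substituting $(\gamma(t),\dot\gamma(t))=(tx,x)$ into \eqref{Berwaldmetric} gives $Q(tx,x)=|x|$ and $\B(tx,x)=|x|/(1-t|x|)^2$, so $L_{\B}(\gamma)=|x|/(1-|x|)$. The inward segment $(1-t)x$ is handled analogously and yields $|x|/(1+|x|)$; both are globally minimising once the Cartan--Hadamard property is established in the last step. For (ii), the formulas \eqref{Berwaldd0xdx0} together with the triangle inequality $d_{\B}(\mathbf{0},x)\le d_{\B}(\mathbf{0},x_0)+d_{\B}(x_0,x)$ imply that every forward ball $\overline{B^+_R(x_0)}$ lies inside a Euclidean ball of radius strictly smaller than $1$, which yields forward completeness; meanwhile $d_{\B}(\cdot,\mathbf{0})<1$ on the whole of $\mathbb{B}^n_1(\mathbf{0})$, ruling out backward completeness. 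Decomposing an arbitrary $y\in T_x\mathbb{B}^n_1(\mathbf{0})\setminus\{0\}$ as $y=ax+bv$ with $v\perp x$ and $|v|=1$ gives
\[
\frac{\B(x,-y)}{\B(x,y)}=\left(\frac{D-a|x|^2}{D+a|x|^2}\right)^2,\qquad D:=\sqrt{a^2|x|^2+b^2(1-|x|^2)},
\]
and monotonicity in $b^2\ge 0$ locates the supremum at $b=0$, $a<0$, producing $\lambda_{\B}(x)=[(1+|x|)/(1-|x|)]^2$, whence $\lambda_{\B}(\mathbb{B}^n_1(\mathbf{0}))=+\infty$. Item (iii) is immediate from the polar-coordinate identity $\nabla r(x)=\dot\gamma_y(r(x))$ for the unit-speed geodesic $\gamma_y$ from $\mathbf{0}$ to $x$: arclength-reparameterising $\gamma(t)=tx$ via $s=t|x|/(1-t|x|)$ yields $\dot\gamma(s)=x/[|x|(1+s)^2]$, which at $s=r(x)=|x|/(1-|x|)$ reduces to \eqref{LemmaBerwaldGradr}.

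For (iv), I would obtain $\mathbf{K}\equiv 0$ from the standard identity $\mathbf{K}(x,y)=(P^2-y^k\partial P/\partial x^k)/\B^2$ for projectively flat sprays $G^i=Py^i$ (cf.~\cite{ShenSpray,Sh1}), combined with the algebraic facts $y^k\partial Q/\partial x^k=0$, $y^k\partial\langle x,y\rangle/\partial x^k=|y|^2$ and $y^k\partial(1-|x|^2)^{-1}/\partial x^k=2\langle x,y\rangle/(1-|x|^2)^2$, which telescope, using $Q^2=(1-|x|^2)|y|^2+\langle x,y\rangle^2$, into $y^k\partial P/\partial x^k=P^2$. Together with the simple connectedness of $\mathbb{B}^n_1(\mathbf{0})$ and the forward completeness from (ii), this yields the Cartan--Hadamard property. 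Since $\sigma\equiv 1$ for $\mathscr{L}^n$, formula \eqref{Scurvature} and Euler's identity $y^i\partial P/\partial y^i=P$ give $\mathbf{S}(x,y)=(n+1)P(x,y)$. Specialising to $y=\nabla r$ produces $Q(x,\nabla r)=(1-|x|)^2$ and $P(x,\nabla r)=1-|x|=1/(1+r(x))$, which is precisely \eqref{minSber}. The range $\mathbf{S}/\B\in(0,2(n+1))$ then follows from $\mathbf{S}/\B=(n+1)(1-|x|^2)D/(D+a|x|^2)$ and the $(a,b)$-optimisation of (ii), whose extreme values $(n+1)(1-|x|)$ and $(n+1)(1+|x|)$ are attained at $b=0$ with $a>0$ and $a<0$ respectively.

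The principal obstacle is the algebraic identity $y^k\partial P/\partial x^k=P^2$ underpinning $\mathbf{K}\equiv 0$: performed naively on $\B$ itself, the computation becomes unwieldy, but working with the pair $(P,Q)$ renders it transparent. Every other step reduces to direct substitution into either \eqref{Berwaldmetric} or the relevant definition.
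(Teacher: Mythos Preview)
Your proposal is correct and follows essentially the same route as the paper: projective flatness reduces (i) and (iii) to explicit radial computations, the triangle inequality with \eqref{Berwaldd0xdx0} gives forward completeness, and $\mathbf{S}=(n+1)P$ yields both \eqref{minSber} and the range $(0,2(n+1))$. The only substantive difference is that you verify $\mathbf{K}\equiv 0$ directly via the identity $y^k\partial P/\partial x^k=P^2$ (which your $(P,Q)$ telescoping handles cleanly), whereas the paper simply cites \cite{Be2,Sh1} for this fact; your orthogonal decomposition $y=ax+bv$ for the reversibility and the $\mathbf{S}/\B$ bounds is a mild repackaging of the paper's direct optimisation over $\langle x,y\rangle/|y|$.
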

\begin{proof}
\textbf{(\ref{Berwaddist})}
The projective flatness implies that the geodesic starting from $\mathbf{0}$ to $x$  has the same trajectory as the radial segment  $\gamma(t) = t x$, $t \in [0,1]$.
Hence
\begin{align*}
{d}_{\B}(\mathbf{0},x)& = \int^1_0 \B(\gamma(t), \dot{\gamma}(t)) {\dd}t = \int^1_0 \frac{|x|}{(1- t |x|)^2} {\dd}t  =  \frac{|x|}{1 - |x|},\\
{d}_{\B}(x,\mathbf{0}) &= \int^1_0 \B(\gamma(t), - \dot{\gamma}(t)) {\dd}t = \int^1_0 \frac{|x|}{(1+ t |x|)^2} {\dd}t  =  \frac{|x|}{1 + |x|}.
\end{align*}

\textbf{(\ref{Berwald_ii})}
 By the definition of reversibility \eqref{defrever}, we have
\begin{equation*}
\lambda_{\B}(x)
  =\sup_{y \in T_x \mathbb{B}^n_1(\mathbf{0})  \setminus \{0\} }\left( \frac{ \sqrt{ (1-|x|^2) |y|^2 + \langle x, y \rangle^2} +\langle x, y \rangle}{ \sqrt{ (1-|x|^2) |y|^2 + \langle x, y \rangle^2} - \langle x, y \rangle }\right)^2
  = \left(  \frac{1+ |x|}{1-|x|}  \right)^2,
\end{equation*}
which yields $\lambda_{\B}(\mathbb{B}^n_1(\mathbf{0}) )=+\infty$.

Let $c(t)$, $t\in [0,1)$ be a unit-speed geodesic. The projective flatness combined with  the triangle inequality and \eqref{Berwaldd0xdx0}  yields
\begin{equation}\label{gest}
t=d_{\B}(c(0),c(t))\geq d_{\B}(\mathbf{0},c(t))- d_{\B}(\mathbf{0},c(0))=\frac{|c(t)|}{1 - |c(t)|}-\frac{|c(0)|}{1 - |c(0)|},
\end{equation}
which implies $\limsup_{t\rightarrow 1^-} |c(t)|<1$.
Set $c(1):=\lim_{t\rightarrow 1^-}c(t)$; the limit exists because $c(t)$ is a ray. The  geodesic ODE then implies that  $c(t)$ can be extended to an interval $[0,1+\varepsilon)$ for some $\varepsilon>0$ with $\lim_{t\rightarrow (1+\varepsilon)^-}|c(t)|<1$.
Inductively we can extend $c(t)$ along the direction $\dot{c}(0)$ until $|c(t)|\rightarrow 1$,
which by the estimate above in \eqref{gest} occurs only as $t \rightarrow +\infty$. Thus $(\Omega,F)$ is forward complete.

On the other hand, \eqref{Berwaldd0xdx0} implies  ${d}_{\B}(x,\mathbf{0})\rightarrow 1/2$ as $|x| \rightarrow 1$.
Consequently, the backward bounded closed ball $\overline{B^-_{1/2}(\mathbf{0})}=\mathbb{B}^n_1(\mathbf{0})$ is not compact,
so  backward completeness fails.

\textbf{(\ref{Berwald_iv})}
 Let $(r,y)$ denote  the polar coordinate system of $(\mathbb{B}^n_1(\mathbf{0}),\B)$ around $\mathbf{0}$.
For $x\in \mathbb{B}^n_1(\mathbf{0})\backslash \{ \mathbf{0}\}$, write $x = (r,y)$ and let $\gamma_y(t)$ be the unit-speed  geodesic with $\dot{\gamma}_y(0)=y\in S_{\mathbf{0}}\Omega$. Then $\gamma_y(r)=x$, and $x$ is parallel to $y$ since $F$ is projectively flat.
Hence, we may write $x^i = h(r,y) y^i$ for some nonnegative function $h(r,y)$.
Substituting this into  \eqref{Berwaldd0xdx0} yields
\[
 r  = \frac{|x|}{1- |x|}=\frac{h(r,y) |y|}{1- h(r,y) |y|},
 \]
 which implies
 \[
 h(r,y) = \frac{r}{(1 +r) |y|}, \quad x^i=\frac{r}{(1 +r) |y|} y^i, \quad \gamma_y(t)=\frac{t y}{(1 + t)  |y|}.
 \]
Using \eqref{geommeaingofr} in combination with the relations above, we obtain
\[
\nabla r(x) = \dot{\gamma}_y (r)=\left.\frac{{\dd}\gamma_y^i}{{\dd}t}\right|_{t=r} \frac{\partial}{\partial x^i} = \frac{y^i}{(1 +r)^2 |y|}    \frac{\partial}{\partial x^i}= \frac{x^i}{r(1+r)}    \frac{\partial}{\partial x^i} = \frac{(1-|x|)^2}{|x|} x.
\]

\textbf{(\ref{Berwald_iii})}
 The flatness (i.e., $\mathbf{K}=0$) follows from  \cite{Be2,Sh1}. Using \eqref{Scurvature},  \eqref{BerwaldP}, and \eqref{LemmaBerwaldGradr}, we have
\begin{align*}
\frac{\mathbf{S}(x,y)}{\B(x,y)}= (n+1)\frac{P(x,y)}{\B(x,y)}  =(n+1)\sqrt{ (1-|x|^2)+ \frac{\langle x, y \rangle^2}{|y|^2}} \left( \sqrt{ (1-|x|^2)+ \frac{\langle x, y \rangle^2}{|y|^2}} -  \frac{\langle x, y \rangle}{|y|} \right),
\end{align*}
which implies $\mathbf{S}\in (0,2(n+1))$ and \eqref{minSber}.
\end{proof}

Now we study the co-metric $\B^*$ of $\B$. Given the complexity of the formula for $\B$, it seems impossible to express $\B^*$  in an explicit way. Instead, we investigate this problem from the perspective of  $(\alpha,\beta)$-metric. Recall that
a Finsler metric $F$ is called an {\it $(\alpha,\beta)$-metric}  if there exists a smooth function $\phi(s)$ such that
$F=\alpha \,\phi ( \beta/\alpha )$,
where $\alpha = \sqrt{a_{ij}(x)y^i y^j}$ is a Riemannian metric and $\beta$ is a $1$-form.
Such metrics, introduced by Matsumoto \cite{Ma}, form a rich and widely studied family in Finsler geometry.
 Berwald's metric $\B$ is a special $(\alpha, \beta)$-metric:
\begin{equation} \label{Berwaldphi}
\B = \alpha (1+s)^2, \qquad s=\frac{\beta}{\alpha},
\end{equation}
with
\begin{equation}\label{exparealphbeata}
\alpha = \alpha(x,y)= \frac{\sqrt{ (1-|x|^2) |y|^2 + \langle x, y \rangle^2}}{(1-|x|^2)^2}, \qquad \beta =\beta(x,y)= \frac{\langle x, y \rangle}{(1-|x|^2)^2}.
\end{equation}

Following the approach of Masca--Sabau--Shimada \cite{Masca}, we derive a quartic equation satisfied by $\B^{*}$. The proof is given in Appendix \ref{axiberwmetric}.
\begin{lemma}\label{lemmaBerwaldquartic}
The co-metric $\B^{*} = \B^{*}(x, \xi)$ of Berwald's metric $\B$ satisfies  the quartic equation
\begin{equation}\label{Berwaldquartic}
\begin{split}
& 16 |x|^2 (1-|x|^2)^2\Big\{ (1-|x|^2) \alpha^{*2} + \beta^{*2} \Big\}{\B}^{*4}
+ 8 \Big\{ ( 10 |x|^2 -1)(1-|x|^2) \alpha^{*2}\beta^{*} + (9 |x|^2 -1)\beta^{*3} \Big\} {\B}^{*3}
\\
&+ \Big\{  (1 -20 |x|^2 -8 |x|^4) \alpha^{*4} +6 (6|x|^2 -5)\alpha^{*2} \beta^{*2}-27 \beta^{*4} \Big\} {\B}^{*2}
+ 12 \alpha^{*4} \beta^{*} {\B}^{*} - \alpha^{*6} =0,
\end{split}
\end{equation}
where $\alpha^{*2} = \alpha^{*2}(x,\xi)=a^{ij} \xi_i \xi_j$, $\beta^{*} =\beta^*(x,\xi)= b^i \xi_i$ and
\begin{equation}\label{Berwalda_upij}
a^{ij} = (1 - |x|^2 )^3 (\delta^{ij} - x^i x^j), \qquad b^i = (1 - |x|^2)^2 x^i.
\end{equation}
\end{lemma}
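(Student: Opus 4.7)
The strategy is to leverage the $(\alpha,\beta)$-structure \eqref{Berwaldphi}, with $\phi(s)=(1+s)^2$: compute the Legendre transform $\mathfrak{L}$ explicitly, read off $\alpha^*$, $\beta^*$, and $\B^*$ as functions of the parameter $s$, and then eliminate $s$ to obtain a polynomial identity in $\alpha^{*},\beta^{*},\B^{*},|x|^2$.

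Since $\phi(s)-s\phi'(s)=1-s^2$ and $\phi'(s)=2(1+s)$, a direct differentiation of $\B^2$ gives
\[
\xi_i = \B\,\B_{y^i} = (1+s)^3\bigl[(1-s)\,a_{ij}y^j + 2\alpha\, b_i\bigr].
\]
Contracting with $a^{ij}\xi_j$ and $b^i$, and using the standard identities $a^{ij}(a_{ik}y^k)(a_{jl}y^l)=\alpha^2$ and $a^{ij}(a_{ik}y^k)b_j = s\alpha$, together with the direct computation $a^{ij}b_ib_j = b^i b_i = |x|^2$ from \eqref{Berwalda_upij}, I obtain
\[
\alpha^{*2}=(1+s)^6\alpha^2\bigl[1+2s-3s^2+4|x|^2\bigr],\qquad \beta^{*}=(1+s)^3\alpha\bigl[s(1-s)+2|x|^2\bigr].
\]
Moreover, the Legendre duality \eqref{F*F} yields $\B^{*}=\B=\alpha(1+s)^2$.

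Setting $u:=\B^{*}$, $v:=1+s$, $A:=\alpha^{*2}$, $B:=\beta^{*}$ and eliminating $\alpha=u/v^2$, these three relations translate into two polynomial identities in $v$ with coefficients in $\mathbb{Z}[u,A,B,|x|^2]$. After the substitution $w:=uv$, a straightforward polynomial manipulation --- namely, subtracting an $u$-multiple of the $\beta^{*}$-identity from the $\alpha^{*2}$-identity to cancel the highest-order $v$-term, while separately rescaling the $\beta^{*}$-identity by $u^2$ --- converts them into
\begin{align*}
(1+2|x|^2)\,w^2 - \bigl[3B+2(1-|x|^2)u\bigr]\,w + (A-Bu) &= 0,\\
w^3 - 3u\,w^2 + 2(1-|x|^2)\,u^2 w + Bu^2 &= 0,
\end{align*}
a quadratic and a cubic in $w$ sharing $w$ as a common root. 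To eliminate $w$, I use the quadratic to express $w^2$ (and hence $w^3$) as linear functions of $w$ with polynomial coefficients in $u$; substituting into the cubic yields a linear equation $P(u)\,w = R(u)$. Plugging $w=R/P$ back into the quadratic produces
\[
c_2 R^2 - c_1 P R - c_0 P^2 = 0,\qquad c_0 := Bu-A,\ \ c_1 := 3B+2(1-|x|^2)u,\ \ c_2 := 1+2|x|^2,
\]
which is (up to sign) the resultant $\mathrm{Res}_w$ of the pair above.

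A priori this identity has degree five in $u$, but an explicit check of the leading coefficients shows that the $u^5$-contributions from $-c_1 P R$ and $-c_0 P^2$ cancel exactly, so the identity is in fact a quartic in $\B^{*}$. Expanding and collecting the remaining powers of $u$ then reproduces \eqref{Berwaldquartic}. The only real difficulty is algebraic bookkeeping: the expansion of $c_2R^2 - c_1 PR - c_0 P^2$ produces a long expression in $A$, $B$, $|x|^2$, and one must track the regrouping carefully to verify the precise numerical factors such as $16|x|^2(1-|x|^2)^2$ at $u^4$ and $(1-20|x|^2-8|x|^4)$ at $u^2$. Apart from this accounting, every step is either Legendre duality applied to an $(\alpha,\beta)$-metric or standard polynomial elimination.
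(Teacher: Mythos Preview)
Your approach is essentially the same as the paper's: both compute the Legendre transform of the $(\alpha,\beta)$-metric $\B=\alpha(1+s)^2$, extract two polynomial relations in a parameter ($t=1+s$ for the paper, $w=\B t$ for you), and eliminate that parameter to reach a quartic in $\B^{*}$. Your quadratic $(1+2|x|^2)w^{2}-[3B+2(1-|x|^2)u]w+(A-Bu)=0$ is exactly the paper's intermediate equation~(A.5), and your cubic in $w$ is the paper's~(A.2) multiplied by $u^{2}$; from there your resultant-style elimination and the paper's sequence of subtractions are interchangeable routes to the same polynomial.

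One small omission: the resultant $c_{2}R^{2}-c_{1}PR-c_{0}P^{2}$ that you obtain does not literally equal \eqref{Berwaldquartic} but rather $-(1+2|x|^{2})^{2}$ times it. A quick check at $u=0$ shows your constant term is $A^{3}(1+2|x|^{2})^{2}$, while the target constant term is $-\alpha^{*6}=-A^{3}$; the paper handles this by dividing its final combination by $-(1+2|x|^{2})^{2}$ explicitly. Since $1+2|x|^{2}\neq 0$ this is harmless, but you should state the normalization to make the claim ``reproduces \eqref{Berwaldquartic}'' precise.
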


\begin{example}\label{drisequalto1}
In this example, we  apply Lemma \ref{lemmaBerwaldquartic} to verify that the differential of the distance function has unit length, i.e.,
 \begin{equation} \label{BerwaldFdr}
{\B}^{*}(x,{\dd}r) = 1, \qquad \forall x\in \mathbb{B}^n_1(\mathbf{0})\backslash\{\mathbf{0} \},
\end{equation}
where $r(x) := d_{\B}(\mathbf{0},x)$.
In fact, owing to \eqref{Berwaldd0xdx0}, we get
\begin{equation}\label{drequ}
 {\dd}r = \sum_i^n \frac{x^i}{(1 - |x|)^2 |x|}  {\dd}x^i, \qquad \forall x\in \mathbb{B}^n_1(\mathbf{0})\backslash\{\mathbf{0} \}.
 \end{equation}
By the definition  of co-metric  \eqref{coFinsler}, we have
\begin{align}
 \B^*(x, {\dd}r)=\sup_{y\in T_x\mathbb{B}^n_1(\mathbf{0})\backslash\{0\}}\frac{\langle {\dd}r,y\rangle}{\B(x,y)}
 =(1+|x|)^2\sup_{|y|=1}\frac{\langle x,y\rangle}{|x|}\frac{ \sqrt{ (1-|x|^2)  + \langle x, y \rangle^2}   }{\left[\sqrt{ (1-|x|^2)   + \langle x, y \rangle^2} +\langle x, y \rangle \right]^2}.\label{Bdrcontroll}
\end{align}
Thus, it suffices to consider the case when $\langle x, y\rangle\geq 0$ and $|y|=1$.
By setting $\theta:=\angle x\mathbf{0}y$ (the angle subtended by $x$ and $y$), we have
\begin{align*}
\frac{\langle x,y\rangle}{|x|}\frac{ \sqrt{ (1-|x|^2)  + \langle x, y \rangle^2}   }{\left[\sqrt{ (1-|x|^2)   + \langle x, y \rangle^2} +\langle x, y \rangle \right]^2}=\frac{\cos \theta}{\sqrt{1-|x|^2\sin^2\theta}}  \frac{1}{ \left[  1+\frac{|x|\cos\theta}{\sqrt{1-|x|^2\sin^2\theta}}  \right]^2   }  \leq 1.
\end{align*}
Hence, \eqref{Bdrcontroll} yields $ \B^*(x, {\dd}r)\leq 2$ for $x\in\mathbb{B}^n_1(\mathbf{0})\backslash\{\mathbf{0} \}$.
 On the other hand, \eqref{Berwalda_upij} and \eqref{drequ} imply
\begin{align}\label{expressionalbeta}
\alpha^{*2}(x,  {\dd}r)
  =  (1+|x|)^4, \qquad
\beta^{*}(x, {\dd}r) =  |x|(1+ |x|)^2.
\end{align}
Substituting the above expressions into \eqref{Berwaldquartic}$\div (1+|x|)^6 $ yields
\[
({\B}^*(x,{\dd}r)-1)\Big\{ ({\B}^*(x,{\dd}r) (1 - |x|)^2 + (1 + |x|)^2 \Big\} \Big\{ 4 |x| {\B}^*(x,{\dd}r) -(1 + |x|)^2 \Big\}^2 =0,
\]
 which can be simplified into
\begin{equation}\label{BerwaldDualeq1}
\left({\B}^*(x,{\dd}r)-1 \right) \Big\{ 4 |x| {\B}^*(x,{\dd}r) -(1 + |x|)^2 \Big\} =0.
 \end{equation}
Since  ${\B}^*(x,{\dd}r)\leq 2$, by letting $|x|\rightarrow 0$ in \eqref{BerwaldDualeq1} we obtain ${\B}^*(x,{\dd}r)=1$ for small $|x|$.

Suppose that the second factor of \eqref{BerwaldDualeq1} will vanish in $\mathbb{B}^n_1(\mathbf{0})$. Let $x_0$ be such a point with the smallest Euclidean norm. Then     ${\B}^*(x,{\dd}r)=1$ for any $0<|x|<|x_0|$. By the continuity, we have two equations:
 \[
 {\B}^*(x_0,{\dd}r) = 1,\qquad 4 |x_0| {\B}^*(x_0,{\dd}r) -(1 + |x_0|)^2=0,
 \]
which gives $|x_0|=1$, contradicting $x_0\in \mathbb{B}^n_1(\mathbf{0})$.
Consequently, the first factor in \eqref{BerwaldDualeq1} vanishes identically and hence \eqref{BerwaldFdr} follows.
\end{example}


\subsection{Analytic properties of  Berwald's metric space}\label{Berwald_subsec2}

\begin{lemma}\label{BerwaldSobolevLemma1}
Let $(\mathbb{B}^n_1(\mathbf{0}),{\B},\mathscr{L}^n)$ be Berwald's  metric space endowed with the Lebesgue measure.
Define a function $w$ on $\mathbb{B}^n_1(\mathbf{0})$ by
\begin{equation}\label{distox}
w(x): = - \left[\ln\left(  \frac{2-|x|}{1 - |x|}\right)\right]^{-\frac{1}{n}}.
\end{equation}
  Then $w \in W^{1,p}_0(\mathbb{B}^n_1(\mathbf{0}),\B,\mathscr{L}^n)$ for every $p  \in [1, +\infty)$.
\end{lemma}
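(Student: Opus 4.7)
The plan is to (i) rewrite $w$ in terms of the Finslerian distance $r(x):=d_{\B}(\mathbf{0},x)$ so that $\B^*({\dd}w)$ becomes explicit, (ii) verify $\|w\|_{W^{1,p}_{\mathscr{L}^n}}<+\infty$ by direct one-dimensional integration, and (iii) approximate $w$ in the backward topology $\mathcal{T}_-$ first by compactly supported Lipschitz functions and then, via Lemma~\ref{lipsconverppax}, by $C^\infty_0$-functions. The key simplification I would exploit is the identity $\frac{2-|x|}{1-|x|}=2+r$ that follows from \eqref{Berwaldd0xdx0}; it collapses $w$ to
\[
w=-[\ln(2+r)]^{-1/n}.
\]
Because $w$ is a smooth, strictly increasing function of $r$ alone, positive $1$-homogeneity of $\B^*$ together with $\B^*({\dd}r)\equiv 1$ from Example~\ref{drisequalto1} would then give
\[
\B^*({\dd}w)=\frac{1}{n\,(2+r)\,[\ln(2+r)]^{\,1+1/n}}.
\]

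To establish finiteness of $\|w\|_{W^{1,p}_{\mathscr{L}^n}}$, I would change variables via $|x|=r/(1+r)$ and pass to Euclidean polar coordinates; both $\int|w|^p\,{\dd}x$ and $\int\B^{*p}({\dd}w)\,{\dd}x$ then reduce to one-dimensional integrals of the form
\[
\int_0^{+\infty}\frac{r^{n-1}}{(1+r)^{n+1}}\cdot\frac{{\dd}r}{(2+r)^{\sigma_1}\,[\ln(2+r)]^{\sigma_2}},
\]
with non-negative exponents $\sigma_1,\sigma_2$ depending on $p$ and $n$. The factor $r^{n-1}$ takes care of the point $r=0$, while the algebraic decay $(1+r)^{-n-1}$ alone suffices as $r\to+\infty$.

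For the approximation step, I would take $w_k(x):=\min\{w(x)+1/k,\,0\}$. These satisfy $|w-w_k|\leq 1/k$ pointwise --- giving trivial $L^p$-convergence --- and
\[
{\dd}(w-w_k)=\chi_{\{w>-1/k\}}\,{\dd}w,
\]
where $\{w>-1/k\}=\{r>e^{k^n}-2\}$ shrinks to a null set as $k\to+\infty$. Since $\B^{*p}({\dd}w)\in L^1$ from the previous step, dominated convergence then delivers $\|w-w_k\|_{W^{1,p}_{\mathscr{L}^n}}\to 0$. Moreover each $w_k$ is Lipschitz on $\mathbb{B}^n_1(\mathbf{0})$ and vanishes outside the compact Euclidean ball of radius $(e^{k^n}-2)/(e^{k^n}-1)<1$, so $w_k\in\Lip_0(\mathbb{B}^n_1(\mathbf{0}))$. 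Applying Lemma~\ref{lipsconverppax} to each $w_k$ and extracting a diagonal subsequence via the triangle inequality for $\|\cdot\|_{W^{1,p}_{\mathscr{L}^n}}$ would then produce the required sequence in $C^\infty_0(\mathbb{B}^n_1(\mathbf{0}))$ converging to $w$ in $\mathcal{T}_-$.

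The point that will require genuine verification is the interplay between two competing singularities: the Euclidean derivative of $w$ diverges as $|x|\to 1$, but this is exactly compensated by the $(1-|x|)^2$-contraction encoded in $\B^*({\dd}r)\equiv 1$. Once this cancellation is in hand, the remaining steps --- estimating the one-dimensional integrals, controlling the truncation error, and invoking Lemma~\ref{lipsconverppax} --- are all routine.
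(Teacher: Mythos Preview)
Your proposal is correct and follows essentially the same route as the paper: rewrite $w=-[\ln(2+r)]^{-1/n}$, use $\B^*({\dd}r)\equiv 1$ to read off $\B^{*}({\dd}w)$, bound the resulting one-dimensional integrals, then truncate to produce compactly supported Lipschitz approximants and invoke Lemma~\ref{lipsconverppax}. The only difference is that the paper inserts an additional preliminary truncation $w_\varepsilon$ that freezes $w$ to a constant on $\{r<\varepsilon\}$ before applying the $\min\{0,\cdot+\delta\}$ cut-off; you go straight to $w_k=\min\{w+1/k,0\}$. Your shortcut is legitimate because $w$ is already Lipschitz near the origin (the radial derivative $w_r=\frac{1}{n(2+r)[\ln(2+r)]^{1+1/n}}$ is bounded there), and the support of $w_k$ is a closed Euclidean ball on which the projective flatness guarantees the geodesic-segment argument needed to upgrade local Lipschitz control to global $\Lip_0$ membership --- the same mechanism the paper invokes for $w_{\varepsilon,\delta}$.
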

\begin{proof} The proof proceeds in two steps.

\smallskip

\textbf{Step 1.} We show $\|w\|_{W^{1,p}_{\mathscr{L}^n}}<+\infty$.

\smallskip
Let $x=(t,y)$ denote the Euclidean polar coordinates. Using $\ln(\frac{2-t}{1 - t}) \geq \ln 2$ for $t\in [0,1)$, we have
\begin{align}
\int_{\mathbb{B}^n_1(\mathbf{0})} |w|^p {\dd} x
& = \int_{\mathbb{B}^n_1(\mathbf{0})} \left[ \ln\left(\frac{2-|x|}{1 - |x|}\right) \right]^{-\frac{p}{n}} {\dd}x= n\omega_n \int_0^1 \left[ \ln\left(\frac{2-t}{1 - t}\right) \right]^{-\frac{p}{n}} t^{n-1} {\dd}t \notag\\
& \leq  n\omega_n \int_0^1 ( \ln 2 )^{-\frac{p}{n}} t^{n-1} {\dd}t =  \frac{ \omega_n}{ ( \ln 2 )^{\frac{p}{n}}} < +\infty.\label{ulpberwadl}
\end{align}
By \eqref{Berwaldd0xdx0}, note that $w(x)=- [\ln(2+r(x))]^{-\frac{1}{n}}$, where $r(x)=d_{\B}(\mathbf{0},x)=|x|/(1-|x|)$.  A direct calculation combined with \eqref{drequ} gives
\begin{align}
{\dd}w &= \frac{\partial w}{\partial r}{\dd}r=\frac{1-|x|}{n(2-|x|)} \left[\ln\left(1+ \frac{1}{1 - |x|}\right)\right]^{-\frac{1}{n}-1} \sum^n_{i =1}   \frac{x^i}{|x| (1-|x|)^2}  {\dd}x^i. \label{Berwalddu}
\end{align}
Together with \eqref{BerwaldFdr} this yields
\begin{equation*}
\begin{split}
\int_{\mathbb{B}^n_1(\mathbf{0})} {\B}^{*p}({\dd}w) {\dd}x
&= \frac{1}{n^p}\int_{\mathbb{B}^n_1(\mathbf{0})}  \left(\frac{1-|x|}{2-|x|}\right)^p \left[\ln\left(1+ \frac{1}{1 - |x|}\right)\right]^{-\frac{p}{n}-p} {\dd}x
\\
&= \frac{n\omega_n}{n^p}\int_0^1  \left(\frac{1-t}{2-t}\right)^p \left[\ln\left(1+ \frac{1}{1 - t}\right)\right]^{-\frac{p}{n}-p} t^{n-1} {\dd} t.
\end{split}
\end{equation*}
Since $\frac{1-t}{2-t} \leq \frac{1}{2}$ and $\ln(1+ \frac{1}{1 - t}) \geq \ln 2$ for $t \in [0, 1)$, we get
\begin{equation}\label{dulpberwad2}
\int_{\mathbb{B}^n_1(\mathbf{0})} {\B}^{*p}({\dd}w) {\dd}x
\leq  \frac{\omega_n}{2^p n^{p} }  (\ln 2)^{-\frac{p}{n}-p}< +\infty.
\end{equation}
Hence, $\|w\|_{W^{1,p}_{\mathscr{L}^n}}<+\infty$ follows by \eqref{ulpberwadl} and \eqref{dulpberwad2}.

\smallskip

\textbf{Step 2.} We prove $w \in W^{1,p}_0(\mathbb{B}^n_1(\mathbf{0}),\B,\mathscr{L}^n)$.

\smallskip

For every small $\varepsilon>0$, set
\[
w_\varepsilon:=
\begin{cases}
- [\ln(2+\varepsilon)]^{-\frac{1}{n}}, &\text{ if }r<\varepsilon,\\[4pt]
- [\ln(2+r)]^{-\frac{1}{n}}= w, &\text{ if }r\geq \varepsilon.
\end{cases}
\]
One checks directly that
\[
\lim_{\varepsilon\rightarrow 0^+}\|w-w_{\varepsilon}\|_{W^{1,p}_{\mathscr{L}^n}}=0.
\]
Now define $w_{ \varepsilon,\delta}:=\min\{0, w_{ \varepsilon} +\delta\}$ for $\delta\in (0,(\ln 2)^{-1/n})$. If we show
\begin{equation}\label{convertgelipp2}
w_{ \varepsilon,\delta}\in \Lip_0(\mathbb{B}^n_1(\mathbf{0})), \qquad \lim_{\delta\rightarrow 0^+}\|w_{\varepsilon}-w_{\varepsilon,\delta}\|_{W^{1,p}_{\mathscr{L}^n}}=0,
\end{equation}
then a diagonal argument together with Lemma \ref{lipsconverppax} implies that  $w$ can be approximated by smooth functions with compact supports;
consequently, $w\in W_0^{1,p}(\mathbb{B}^n_1(\mathbf{0}),\B,\mathscr{L}^n)$.

To verify \eqref{convertgelipp2}, note first that by projective flatness,
one can directly check $w_{ \varepsilon,\delta}\in \Lip_0(\mathbb{B}^n_1(\mathbf{0}))$ with
\begin{equation*}
\supp (w_{ \varepsilon,\delta})=\overline{\{w_{ \varepsilon,\delta}<0\}}=\overline{\{ r< \exp(\delta^{-n})-2\}}= \left\{ |x|\leq \frac{\exp(\delta^{-n})-2}{\exp(\delta^{-n})-1} \right\}.
\end{equation*}
Moreover,
a direct calculation yields
\begin{align*}
\lim_{\delta\rightarrow 0^+}\int_{\mathbb{B}^n_1(\mathbf{0})} |w_{\varepsilon}-w_{\varepsilon,\delta}|^p {\dd}x&=\lim_{\delta\rightarrow 0^+}\int_{\{r < \exp(\delta^{-n})-2\}}\delta^p {\dd}x +\lim_{\delta\rightarrow 0^+}\int_{\{r \geq \exp(\delta^{-n})-2\}}|w_{\varepsilon}|^p {\dd}x\notag\\
&\leq \lim_{\delta\rightarrow 0^+}\int_{\mathbb{B}^n_1(\mathbf{0})}\delta^p {\dd}x +\lim_{\delta\rightarrow 0^+}\int_{\left\{|x|\geq \frac{\exp(\delta^{-n})-2}{\exp(\delta^{-n})-1}\right\}}|w|^p {\dd}x=0.
\end{align*}
Similarly, since $ \mathsf{B}^{*}({\dd} w)\in L^p(\mathbb{B}^n_1(\mathbf{0}),\mathscr{L}^n)$, we obtain
\begin{align*}
\lim_{\delta\rightarrow 0^+}\int_{\mathbb{B}^n_1(\mathbf{0})} \mathsf{B}^{*p}({\dd} w_{\varepsilon}-{\dd} w_{\varepsilon,\delta})  {\dd}x \leq \lim_{\delta\rightarrow 0^+}\int_{\left\{|x|\geq \frac{\exp(\delta^{-n})-2}{\exp(\delta^{-n})-1}\right\}} \mathsf{B}^{*p}({\dd} w) {\dd}x= 0.
\end{align*}
Then \eqref{convertgelipp2} follows, which concludes the proof.
\end{proof}

Based on the above lemma, we have the following result.
\begin{theorem}\label{bersolnol}
Let $(\mathbb{B}^n_1(\mathbf{0}),{\B},\mathscr{L}^n)$ be Berwald's metric space endowed with the Lebesgue measure.
Then the Sobolev space $W^{1,p}_0(\mathbb{B}^n_1(\mathbf{0}),\mathscr{L}^n)$ is not a vector space for all $p\in (1,+\infty)$.
\end{theorem}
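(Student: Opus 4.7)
The plan is to find a function $w \in W^{1,p}_0$ whose negative $-w$ lies outside $W^{1,p}_0$; this alone rules out closure under scalar multiplication and hence the vector-space structure. The natural candidate is the function $w$ from Lemma \ref{BerwaldSobolevLemma1}, because it is purposely tuned to the \emph{forward} direction: by \eqref{Berwalddu} one has ${\dd}w = A(x)\,{\dd}r$ with a positive scalar factor
\[
A(x) = \frac{1-|x|}{n(2-|x|)}\left[\ln\!\left(1+\frac{1}{1-|x|}\right)\right]^{-\frac{1}{n}-1},
\]
and $\B^{*}({\dd}r)=1$ by Example \ref{drisequalto1}. Reversing the sign of ${\dd}w$ puts us against the grain of the asymmetric norm $\B^{*}$, where the deterioration near $\partial\mathbb{B}^n_1(\mathbf{0})$ should become catastrophic.

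The first step is a pointwise lower bound for $\B^{*}(-{\dd}r)$. Rather than inverting the quartic equation of Lemma \ref{lemmaBerwaldquartic}, I plug the test vector $y=-x$ into the definition \eqref{coFinsler}: a direct calculation from \eqref{Berwaldmetric} gives $\B(x,-x)=|x|/(1+|x|)^{2}$, and the resulting quotient $\langle-{\dd}r,-x\rangle/\B(x,-x)$ yields
\[
\B^{*}(x,-{\dd}r)\;\geq\;\frac{(1+|x|)^{2}}{(1-|x|)^{2}}.
\]
Combining this with $\B^{*}(-{\dd}w)=A(x)\,\B^{*}(-{\dd}r)$ and the boundary asymptotic $A(x)\asymp (1-|x|)\bigl[\ln(1/(1-|x|))\bigr]^{-1-1/n}$ gives $\B^{*}(-{\dd}w)\gtrsim (1-|x|)^{-1}\bigl[\ln(1/(1-|x|))\bigr]^{-1-1/n}$. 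Passing to radial coordinates and substituting $u=1-|x|$ reduces the $p$-th power integral over any boundary collar to a tail of the form $\int_{0}^{\epsilon}u^{-p}\bigl[\ln(1/u)\bigr]^{-p(1+1/n)}\,{\dd}u$, which diverges for every $p>1$. Thus for every neighborhood $D$ of $\partial\mathbb{B}^n_1(\mathbf{0})$,
\[
\int_{D}\B^{*p}(-{\dd}w)\,{\dd}x = +\infty.
\]

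The theorem now follows by a support-localization contradiction. Suppose $-w\in W^{1,p}_0(\mathbb{B}^n_1(\mathbf{0}),\B,\mathscr{L}^n)$ and pick $u_{k}\in C^{\infty}_{0}(\mathbb{B}^n_1(\mathbf{0}))$ with $\|-w-u_{k}\|_{W^{1,p}_{\mathscr{L}^{n}}}\to 0$; in particular $\int\B^{*p}({\dd}(-w-u_{k}))\,{\dd}x\to 0$. Since each $u_{k}$ has compact support in $\mathbb{B}^{n}_{1}(\mathbf{0})$, the complement $D_{k}:=\mathbb{B}^{n}_{1}(\mathbf{0})\setminus\supp(u_{k})$ contains a neighborhood of $\partial\mathbb{B}^{n}_{1}(\mathbf{0})$ on which $u_{k}\equiv 0$, so ${\dd}(-w-u_{k})=-{\dd}w$ there. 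The preceding divergence then gives
\[
\int_{\mathbb{B}^n_1(\mathbf{0})}\B^{*p}({\dd}(-w-u_{k}))\,{\dd}x\;\geq\;\int_{D_{k}}\B^{*p}(-{\dd}w)\,{\dd}x=+\infty,
\]
contradicting the convergence. Therefore $-w\notin W^{1,p}_{0}$, while Lemma \ref{BerwaldSobolevLemma1} gives $w\in W^{1,p}_{0}$, so $W^{1,p}_{0}$ cannot be closed under $w\mapsto-w$.

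The main technical obstacle is the lower bound for $\B^{*}(-{\dd}r)$, because Lemma \ref{lemmaBerwaldquartic} supplies $\B^{*}$ only implicitly. The resolution is to use a well-chosen competitor ($y=-x$) in the supremum defining the co-metric, which sidesteps the quartic equation entirely. The rest of the argument is pure support-localization: every smooth compactly supported test function vanishes on a boundary collar where $\B^{*p}(-{\dd}w)$ fails to be integrable, and this failure is inherited verbatim by the would-be approximating sequence.
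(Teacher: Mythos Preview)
Your proof is correct and follows the same overall strategy as the paper: use the witness $w$ of Lemma~\ref{BerwaldSobolevLemma1} and show that $-w\notin W^{1,p}_0$ by proving $\int \B^{*p}(-{\dd}w)\,{\dd}x=+\infty$. The genuine difference lies in how $\B^{*}(x,-{\dd}r)$ is handled. The paper substitutes $\alpha^{*2}(x,-{\dd}w)$ and $\beta^{*}(x,-{\dd}w)$ into the quartic of Lemma~\ref{lemmaBerwaldquartic}, factors it, and obtains the \emph{exact} value $\B^{*}(x,-{\dd}w)=\frac{(1+|x|)^{2}}{(1-|x|)^{2}}\,w_r$; you instead test the dual-norm supremum \eqref{coFinsler} against $y=-x$ and get the lower bound $\B^{*}(x,-{\dd}r)\geq \frac{(1+|x|)^{2}}{(1-|x|)^{2}}$, which (unbeknownst to you) is sharp and yields the very same estimate. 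Your route is more elementary and bypasses the quartic machinery entirely, at the price of an inequality in place of an identity---but for the divergence argument an inequality is all that is needed. For the final exclusion step, the paper simply invokes the triangle inequality for $\|\cdot\|_{W^{1,p}_{\mathscr{L}^n}}$ (Remark~\ref{sobonormprop}) to conclude that membership in $W^{1,p}_0$ forces finite pseudo-norm; your support-localization contradiction is equally valid but a touch longer than necessary.
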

\begin{proof} Let $w$ be defined as in \eqref{distox}.
In view of Lemma \ref{BerwaldSobolevLemma1}, it suffices to show $\int_{\mathbb{B}^n_1(\mathbf{0})} {\B}^{*p}( -{\dd}w) {\dd}x = +\infty$.
By   \eqref{expressionalbeta} and \eqref{Berwalddu},  we have
\begin{align*}
\alpha^{*2}(x, - {\dd}w) = w_r^2 \alpha^{*2}(x, {\dd}r)
 = w_r^2 (1+|x|)^4, \quad
\beta^{*}(x, - {\dd}w) = - w_r \beta^{*}(x, {\dd}r) =- w_r  |x|(1+ |x|)^2,
\end{align*}
where
\begin{equation*}\label{urexpression}
 w_r:= \frac{\partial w}{\partial r}= \frac{1-|x|}{n(2-|x|)} \left[\ln\left(1+ \frac{1}{1 - |x|}\right)\right]^{-\frac{1}{n}-1}>0.
 \end{equation*}
Substituting the above expressions into  \eqref{Berwaldquartic}$\div [(1+|x|)^6 w_r^2] $ gives
\begin{align*}
 & 16 |x|^2 (1 - |x|)^2 \,{\B}^{*4}(x,-{\dd}w)  +  8|x| (|x|^4-10|x|^2+1)w_r \,{\B}^{*3}(x,-{\dd}w)
 \\
 &+ (1+|x|)^2 (|x|^4-50 |x|^2+1) w_r^2\, {\B}^{*2}(x,-{\dd}w) -12 |x| (1+|x|)^4  w_r^3 \,{\B}^{*}(x,-{\dd}w) -(1+|x|)^6 w_r^4 =0,
\end{align*}
which factors as
\begin{align*}
({\B}^{*}(x,-{\dd}w) + w_r) \Big\{ (1 - |x|)^2 \,{\B}^{*}(x,-{\dd}w) - (1 +|x|)^2 w_r\Big\} \Big\{ 4 |x|\, {\B}^{*}(x,-{\dd}w) + (1 +|x|)^2 w_r \Big\}^2 =0.
\end{align*}
Because ${\dd}w\neq0$, we have ${\B}^{*}(x,-{\dd}w)>0$; hence the  second factor must vanish. Consequently,
\begin{equation}\label{Bt_-du}
\B^{*}(x,-{\dd}w) = \frac{(1 +|x|)^2}{(1 - |x|)^2} w_r = \frac{(1 +|x|)^2}{n(1 - |x|)(2-|x|)} \left[\ln\left(1+ \frac{1}{1 - |x|}\right)\right]^{-\frac{1}{n}-1}.
\end{equation}

Given $p>1$, there exists a small $\vartheta>0$ such that
\begin{equation}\label{palphscondion}
p-\vartheta p\left( \frac{1}{n}+1\right)>1.
\end{equation}
Moreover, choose a number $\delta=\delta(\vartheta)\in (0,1)$ such that for $ |x|\in (\delta,1)$,
\begin{equation*}
\left[\ln\left(1+ \frac{1}{1 - |x|}\right)\right]^{-\frac{1}{n}-1} > \left[ \frac{1}{1 - |x|} -1 \right]^{\vartheta(-\frac{1}{n}-1)}.
\end{equation*}
Combined with \eqref{Bt_-du}, these choices yield
\begin{align}\label{Bduinf}
\int_{\mathbb{B}^n_1(\mathbf{0})} {\B}^{*p}(x, -{\dd}w) {\dd}x&\geq  \frac{1}{2^pn^p} \int_{\mathbb{B}^n_1(\mathbf{0})}  \frac{(1 +|x|)^{2p}}{(1 - |x|)^p } \left[\ln\left(1+ \frac{1}{1 - |x|}\right)\right]^{-\frac{p}{n}-p} {\dd}x,\notag\\
& \geq \frac{1}{2^p n^p} \int_{\{\delta<|x|<1\}}  \frac{(1+|x|)^{2p}}{(1 - |x|)^p}\left[ \frac{1}{1 - |x|} -1 \right]^{- \vartheta p(\frac{1}{n}+1)} {\dd}x\notag\\
& = \frac{\omega_n}{2^p n^{p-1}} \int_\delta^1  \frac{(1+t)^{2p}t^{n-1-\vartheta p(\frac{1}{n}+1)}}{(1 - t)^{p-\vartheta p(\frac{1}{n}+1)}}  {\dd}t = + \infty,
\end{align}
where the last equality follows from \eqref{palphscondion}.
This completes the proof.
\end{proof}

The nonlinearity of $W^{1,p}_0(\mathbb{B}^n_1(\mathbf{0}),\B,\mathscr{L}^n)$ has its underlying reason in the incompatibility of the relevant topologies (see Remark \ref{sobondistance}).

\begin{corollary}\label{forwbacktopso} For each $p\in (1,+\infty)$, the  forward and backward topologies of
 $W^{1,p}_0(\mathbb{B}^n_1(\mathbf{0}),\B,\mathscr{L}^n)$ are incompatible. In particular, there exist  a sequence $(w_k)$ and  $w$ in $W^{1,p}_0(\mathbb{B}^n_1(\mathbf{0}),\B,\mathscr{L}^n)$  such that
 \[
 \lim_{k\rightarrow \infty}\|w-w_k\|_{W^{1,p}_{\mathscr{L}^n}}=0, \quad \|w_k-w\|_{W^{1,p}_{\mathscr{L}^n}}=+\infty, \ \forall k\in \mathbb{N}.
 \]
\end{corollary}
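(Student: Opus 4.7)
The plan is to exhibit a single sequence that converges to $w$ in $\mathcal{T}_{-}$ but has \emph{infinite} $\mathcal{T}_{+}$-distance from $w$. The natural witness is the function $w$ of Lemma~\ref{BerwaldSobolevLemma1}, for which the proof of Theorem~\ref{bersolnol} already exhibits the crucial asymmetry
$$\int_{\mathbb{B}^n_1(\mathbf{0})}\B^{*p}({\dd}w)\,{\dd}x<+\infty,\qquad \int_{\mathbb{B}^n_1(\mathbf{0})}\B^{*p}(-{\dd}w)\,{\dd}x=+\infty.$$
Because $w\in W^{1,p}_0(\mathbb{B}^n_1(\mathbf{0}),\B,\mathscr{L}^n)$, the very definition of this space supplies a sequence $(w_k)\subset C^\infty_0(\mathbb{B}^n_1(\mathbf{0}))$ with $\|w-w_k\|_{W^{1,p}_{\mathscr{L}^n}}\to 0$; this is exactly $w_k\to w$ in $\mathcal{T}_{-}$.

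Next, to lower-bound the reverse norm $\|w_k-w\|_{W^{1,p}_{\mathscr{L}^n}}$, I would decompose $-{\dd}w=({\dd}w_k-{\dd}w)+(-{\dd}w_k)$, apply the triangle inequality of the Minkowski norm $\B^*(x,\cdot)$, and then the convexity estimate $(a+b)^p\le 2^{p-1}(a^p+b^p)$ to obtain the pointwise bound
$$\B^{*p}(x,-{\dd}w)\;\le\;2^{p-1}\bigl[\B^{*p}(x,{\dd}w_k-{\dd}w)+\B^{*p}(x,-{\dd}w_k)\bigr].$$
Integrating over $\mathbb{B}^n_1(\mathbf{0})$ and invoking $\int\B^{*p}(-{\dd}w)\,{\dd}x=+\infty$ forces at least one of the two terms on the right to have infinite integral; the whole game is to show that this must be the first one.

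The decisive observation is that $\int_{\mathbb{B}^n_1(\mathbf{0})}\B^{*p}(-{\dd}w_k)\,{\dd}x<+\infty$ for every $k$. Indeed, $w_k\in C^\infty_0$, so $-{\dd}w_k$ is a smooth $1$-form with compact support $K_k\Subset\mathbb{B}^n_1(\mathbf{0})$; the continuity of $\B^*$ on $T^*\mathbb{B}^n_1(\mathbf{0})$ then makes $x\mapsto\B^*(x,-{\dd}w_k(x))$ a bounded continuous function supported in $K_k$, and therefore in $L^p$. Consequently $\int\B^{*p}({\dd}w_k-{\dd}w)\,{\dd}x=+\infty$ for every $k$, and so $\|w_k-w\|_{W^{1,p}_{\mathscr{L}^n}}=+\infty$. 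The incompatibility of $\mathcal{T}_{+}$ and $\mathcal{T}_{-}$ follows at once: the same sequence $(w_k)$ converges to $w$ in $\mathcal{T}_{-}$ but fails completely in $\mathcal{T}_{+}$.

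The proof is essentially triangle-inequality bookkeeping driven by the non-integrability $\int\B^{*p}(-{\dd}w)\,{\dd}x=+\infty$ already proved in Theorem~\ref{bersolnol}; no further analytic input is required. The only genuine subtlety — and the one that could be an obstacle if $w_k$ were not smooth with compact support — is the finiteness of $\int\B^{*p}(-{\dd}w_k)\,{\dd}x$, which pins down precisely why $w$ itself (whose support reaches $\partial\mathbb{B}^n_1(\mathbf{0})$) produces the asymmetry while every compactly supported smooth approximant does not.
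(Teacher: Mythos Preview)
Your proof is correct and essentially the same as the paper's: both take the function $w$ of Lemma~\ref{BerwaldSobolevLemma1}, its $C^\infty_0$-approximants $(w_k)$, and exploit the contrast $\int\B^{*p}(-{\dd}w)\,{\dd}x=+\infty$ versus $\int\B^{*p}(-{\dd}w_k)\,{\dd}x<+\infty$ via a triangle-inequality argument. The only cosmetic difference is that the paper applies the triangle inequality directly at the level of the pseudo-norm, writing $\|-w\|\le\|-w_k\|+\|w_k-w\|$ (cf.~Remark~\ref{sobonormprop}), rather than pointwise on $\B^*$ as you do.
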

\begin{proof}
Let  $w$  be as in \eqref{distox}.
By Lemma \ref{BerwaldSobolevLemma1}, there exists a sequence $(w_k)\subset C^\infty_0(\mathbb{B}^n_{1}(\mathbf{0}))$ such that $\lim_{k\rightarrow \infty}\|w-w_k\|_{W^{1,p}_{\mathscr{L}^n}}=0$. It remains to show $\|w_k-w\|_{W^{1,p}_{\mathscr{L}^n}}=+\infty$ for all $k\in \mathbb{N}$.
Suppose by contradiction that  $\|w_{k_0}-w\|_{W^{1,p}_{\mathscr{L}^n}}<+\infty$ for some $k_0\in \mathbb{N}$. Then the triangle inequality of $\|\cdot\|_{W^{1,p}_{\mathscr{L}^n}}$ forces
\[
\|-w\|_{W^{1,p}_{\mathscr{L}^n}}=\|0-w\|_{W^{1,p}_{\mathscr{L}^n}}\leq \|0-w_{k_0}\|_{W^{1,p}_{\mathscr{L}^n}}+\|w_{k_0}-w\|_{W^{1,p}_{\mathscr{L}^n}} <+\infty,
\]
which contradicts the fact that  $\|-w\|_{W^{1,p}_{\mathscr{L}^n}}=+\infty$ by \eqref{Bduinf}. This concludes the proof.
\end{proof}

In the remainder of this subsection,
we demonstrate the failure of several classical functional inequalities on Berwald's metric space.
This phenomenon cannot be deduced from the results of Krist\'aly--Li--Zhao\cite{KLZ}, because here the
$S$-curvature does not admit a uniform positive lower bound (see \eqref{minSber}).

\begin{theorem}\label{MainThmBerw}
Let $(\mathbb{B}^n_1(\mathbf{0}),\B,\mathscr{L}^n)$ be Berwald's metric space endowed with the Lebesgue measure. Denote by  $r(x)=d_{\B}(\mathbf{0},x)$ the distance function from the origin.
Then the following statements are true:
\begin{enumerate}[{\rm (i)}]
\item\label{Berw4}
the $L^p$-Hardy inequality fails for every $p\in (1, n)$, i.e.,
		\[
		\inf_{f\in C^\infty_0({\mathbb{B}^n_1(\mathbf{0})})\backslash\{0\}}\frac{ \int_{\mathbb{B}^n_1(\mathbf{0})} {\B}^{*p}({\rm d} f) {\dd}x}{ \int_{\mathbb{B}^n_1(\mathbf{0})} {|f|^p}{r^{-p}}{\dd}x}=0;
		\]
		
\item\label{Berw5}
the  generalized uncertainty principle fails, i.e., for any $-p+1<s\leq 1<p<n$,
		\[
		\inf_{f\in C^\infty_0({\mathbb{B}^n_1(\mathbf{0})})\backslash\{0\}}\frac{\left(  \int_{{\mathbb{B}^n_1(\mathbf{0})}} {\B}^{*p}({\rm d} f){\dd}x \right)^{1/p}\left(   \int_{{\mathbb{B}^n_1(\mathbf{0})}}|f|^p r^{p's} {\dd}x \right)^{1/{p'}} }{  \int_{{\mathbb{B}^n_1(\mathbf{0})}} |f|^p r^{s-1} {\dd}x   }=0.
		\]

\end{enumerate}

\end{theorem}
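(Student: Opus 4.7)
My plan is to produce a single one-parameter family of radial test functions witnessing the failure of both inequalities simultaneously. The decisive observation is the asymmetry exposed in Subsection~\ref{subsectionBasicBerwald}: by Example~\ref{drisequalto1} we have $\B^*(\dd r)=1$, whereas the reversed co-norm $\B^*(-\dd r)=\lambda_\B(x)$ blows up near the boundary (compare the factorization in the proof of Theorem~\ref{bersolnol}). Consequently, radial functions \emph{increasing} in $r$ have a costless $\B^*$-gradient, leaving all weighted $|f|^p$-integrals free to be tuned against it.

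The candidate I would use is the piecewise-linear tent
\[
f_k(x) := \min\{0,\; -1 + r(x)/k\}, \qquad k \in \mathbb N,
\]
where $r(x)=|x|/(1-|x|)$. The support $\{r\leq k\} = \{|x|\leq k/(1+k)\}$ is compactly contained in $\mathbb B^n_1(\mathbf 0)$, and the local equivalence between $d_\B$ and the Euclidean metric on compact sets (cf.~\cite[(6.2.3)]{BCS}) promotes Euclidean Lipschitzness to $d_\B$-Lipschitzness, so $f_k\in\Lip_0(\mathbb B^n_1(\mathbf 0))$. Positive homogeneity of $\B^*$ and $\B^*(\dd r)=1$ give $\B^*(\dd f_k)=1/k$ on $\{r<k\}$ and $0$ elsewhere, hence the asymmetry-driven bound
\[
\int_{\mathbb B^n_1(\mathbf 0)} \B^{*p}(\dd f_k)\,\dd x \;\leq\; \frac{\omega_n}{k^p}.
\]

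I would then evaluate the remaining integrals by passing to the Euclidean polar variable and then to $r$, under which radial integrals transform by $\dd x = n\omega_n\, r^{n-1}(1+r)^{-n-1}\,\dd r$. For part (\ref{Berw4}), monotone convergence yields $\int |f_k|^p r^{-p}\,\dd x \nearrow n\omega_n\int_0^\infty r^{n-p-1}(1+r)^{-n-1}\,\dd r$, a positive finite number for $p\in(1,n)$; hence $\mathscr H_{\mathbf 0,p}(f_k)=O(k^{-p})\to 0$. For part (\ref{Berw5}), $\int |f_k|^p r^{s-1}\,\dd x$ converges to a positive finite limit in the range $1-p<s\leq 1$, and the only integral that can grow with $k$ is $\int |f_k|^p r^{p's}\,\dd x$, which via the rescaling $u=r/k$ stays bounded when $p's\leq 1$ and is of order $k^{p's-1}$ when $p's>1$. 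Combining,
\[
\mathscr U_{\mathbf 0,p,s}(f_k) \;\lesssim\; k^{-1}\, k^{(p's-1)_+/p'},
\]
whose exponent is strictly negative because $(p's-1)_+ \leq p'-1 < p'$ (as $s\leq 1$). A final appeal to Lemma~\ref{funfiln} replaces $\Lip_0\setminus\{0\}$ by $C^\infty_0\setminus\{0\}$ in both infima.

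The step I expect to require the most care is the scaling estimate for $\int |f_k|^p r^{p's}\,\dd x$ in the regime $p's>1$: naive bounded test functions yield a divergent weighted integral there, and one must check that the $k^{-1}$ gain from the gradient continues to dominate the $k^{(p's-1)/p'}$ growth of the weighted term. Once this growth-vs-gain balance is confirmed, everything else reduces to the identity $\B^*(\dd r)=1$ and routine dominated/monotone convergence.
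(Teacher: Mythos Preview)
Your argument is correct and follows the same overall strategy as the paper: a one-parameter family of radial, $r$-increasing test functions, the key identity $\B^*(\dd r)=1$ from Example~\ref{drisequalto1} so that the gradient term carries the full decay, a uniform positive lower bound on the denominator, a case split on $p's$ for the remaining weighted integral, and Lemma~\ref{funfiln} to pass from $\Lip_0$ to $C^\infty_0$. The only substantive difference is the profile: the paper uses $u_\iota=-e^{-\iota r}$, which is smooth but not compactly supported and therefore requires an additional truncation ($u_{\iota,\epsilon}$, then $\tilde u_{\iota,\epsilon,\delta}$) before Lemma~\ref{funfiln} can be invoked, whereas your tent $f_k=\min\{0,-1+r/k\}$ already lies in $\Lip_0$, making the approximation step cleaner and the $k$-scaling of each integral more transparent via the single substitution $u=r/k$.

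One small inaccuracy: at the borderline $p's=1$ the integral $\int|f_k|^p r^{p's}\,\dd x$ is not bounded but grows like $\ln k$ (the tail behaves like $\int_{1/k}^1 (1-u)^p u^{-1}\,\dd u$); this is harmless since $k^{-1}(\ln k)^{1/p'}\to 0$, but your dichotomy should read $p's<1$ (bounded), $p's=1$ (logarithmic), $p's>1$ (order $k^{p's-1}$). The growth-vs-gain balance you flagged is indeed the only delicate point, and your exponent check $(p's-1)_+<p'$ (since $s\le 1$) settles it.
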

\begin{proof}  Given $\iota\in (0,1)$, define a function $u_{\iota}$ on $\mathbb{B}^n_1(\mathbf{0})$ by
\begin{equation}\label{ualphadefine}
u_{\iota}(x) := - e^{-\iota r(x)} = - \exp\left({\frac{-\iota |x|}{1-|x|}}\right).
\end{equation}
Since $\B^*({\dd}r)=1$, we get
\begin{align}
\int_{\mathbb{B}^n_1(\mathbf{0})} {\B}^{*p}({\dd}u_{\iota}) {\dd}x &= \iota^p \int_{\mathbb{B}^n_1(\mathbf{0})}|u_\iota|^p {\dd}x
 =  \iota^p \int_{\mathbb{B}^n_1(\mathbf{0})} e^{\frac{-\iota p |x|}{1-|x|}} {\dd}x\notag\\
&= \iota^p n\omega_n \int_0^1 e^{\frac{-\iota p t}{1-t}} t^{n-1} {\dd}t
\leq  \iota^p n\omega_n \int_0^1  t^{n-1} {\dd}t = {\iota^p \omega_n}.\label{BerwaldFstarp}
\end{align}

\textbf{(\ref{Berw4})}
 Choose a small $\varepsilon_0\in (0,1)$ such that
$ e^{\frac{- t}{1-t}} >2^{-1}$ for $ t \in (0,\varepsilon_0)$.
A direct calculation then yields
\begin{align}\label{Berwaldstandintegr}
\int_{\mathbb{B}^n_1(\mathbf{0})} \frac{|u_{\iota}|^p}{r^p}{\dd}x &= \int_{\mathbb{B}^n_1(\mathbf{0})} \frac{(1-|x|)^p}{|x|^p}  e^{\frac{-\iota p |x|}{1-|x|}}{\dd}x=n\omega_n \int_0^1  (1-t)^p t^{n-1-p} e^{\frac{-\iota p t}{1-t}}  {\dd}t\notag\\
&\geq
 \frac{n\omega_n (1-\varepsilon_0)^p}{2^{\iota p}}  \int_0^{\varepsilon_0} t^{n-1-p} {\dd}t \geq \frac{n\omega_n (1-\varepsilon_0)^p \varepsilon_0^{n-p} }{2^{  p}(n-p)}.
\end{align}

On the other hand, for every small $\epsilon>0$, set
\[
u_{\iota, \epsilon}:=
\begin{cases}
-e^{-\iota\epsilon}, &\text{ if }  r < \epsilon,\\
-e^{-\iota r}, &\text{ if }   r \geq \epsilon,
\end{cases}
\qquad \tilde{u}_{\iota, \epsilon}:=\min\{0, u_{\iota, \epsilon} +\epsilon\}.
\]
Then $\tilde{u}_{\iota, \epsilon}\in  \Lip_0(\mathbb{B}^n_1(\mathbf{0}))$.
Now a direct calculation
yields
\begin{equation}\label{Hardyifuncitonal}
\lim_{\epsilon\rightarrow 0^+} \mathscr{H}_{\mathbf{0},p}(\tilde{u}_{\iota, \epsilon})=  \mathscr{H}_{\mathbf{0},p}(u_\iota)<+\infty,
\end{equation}
where $ \mathscr{H}_{\mathbf{0},p}$ is defined in Definition \ref{function11}.
Now Lemma \ref{funfiln} combined with \eqref{BerwaldFstarp}--\eqref{Hardyifuncitonal} implies
\begin{align*}
\inf_{f\in C^\infty_0(\mathbb{B}^n_1(\mathbf{0}))} \mathscr{H}_{\mathbf{0},p}(f)=\inf_{f\in \Lip_0(\mathbb{B}^n_1(\mathbf{0}))} \mathscr{H}_{\mathbf{0},p}(f)\leq \lim_{\iota\rightarrow 0^+}\mathscr{H}_{\mathbf{0},p}(u_\iota)
\leq  \lim_{\iota\rightarrow 0^+}\frac{2^{  p} (n-p) \iota^p  }{n (1-\varepsilon_0)^p \varepsilon_0^{n-p}}= 0.
\end{align*}
Hence, the Hardy inequality fails for $p\in (1,n)$.

\smallskip

\textbf{(\ref{Berw5})}
As before,  choose a small $\varepsilon_0\in (0,1)$ such that
$ e^{\frac{- t}{1-t}} >2^{-1}$ for $ t \in (0,\varepsilon_0)$. Since $1-s\geq0$ and $n+s-1>0$, we have
\begin{align}
\int_{\mathbb{B}^n_1(\mathbf{0})} |u_{\iota}|^pr^{s-1} {\dd}x&=n\omega_n\int^1_0 e^{-\iota p\left( \frac{t}{1-t} \right)}\left( \frac{t}{1-t} \right)^{s-1}t^{n-1}{\dd}t\notag\\
& \geq     \frac{n\omega_n}{2^{\iota p}}  \int^{\varepsilon_0}_0  (1-t)^{1-s} t^{n+s-2}  {\dd}t\geq  \frac{n\omega_n(1-\varepsilon_0)^{1-s}\varepsilon_0^{n+s-1}}{2^{ p}(n+s-1)}= : C_1(s,p,n,\varepsilon_0).
\label{BerwaldhARDdenome}
\end{align}

For the numerator factor containing $r^{p's}$ we write
 \begin{align}
   \int_{{\mathbb{B}^n_1(\mathbf{0})}}|u_\iota|^p r^{p's}{\dd}x =      n\omega_n \int^1_0 e^{-\frac{\iota p t}{1-t}}\left( \frac{t}{1-t} \right)^{p's}   t^{n-1}{\dd}t.\label{BerwaldhARDdenumer}
\end{align}
For  a small positive $\delta< 1$, since $p's+n>0$,
\begin{align}\label{BerwaldhARDdenumer1}
  \int^\delta_0 e^{-\frac{\iota p t}{1-t}}\left( \frac{t}{1-t} \right)^{p's}   t^{n-1}{\dd}t\leq  \max\left\{1,  (1-\delta)^{-p's}  \right\}\int^\delta_0 t^{p's+n-1} {\dd}t=:C_2(\delta,s,p,n).
\end{align}
For the remaining part, we have
\begin{align}\label{medpartloc}
\int^1_\delta e^{-\frac{\iota p t}{1-t}}\left( \frac{t}{1-t} \right)^{p's}   t^{n-1}{\dd}t
\leq  \max\left\{1,\delta^{p's+n-1}\right\}\int^1_\delta e^{-\frac{\iota p \delta}{1-t}}(1-t)^{-p's}{\dd}t.
\end{align}
\noindent\textbf{Case 1: $p's\leq 1$.} Then we have
\begin{align}\label{1detecontr1}
\int^1_\delta e^{-\frac{\iota p \delta}{1-t}}(1-t)^{-p's}{\dd}t=&\int^1_\delta  e^{-\frac{\iota p \delta}{1-t}}(1-t)^{2-p's}{\dd}\left(\frac{1}{1-t}\right)\leq(1-\delta)^{2-p's}\int^1_\delta  e^{-\frac{\iota p \delta}{1-t}}{\dd}\left(\frac{1}{1-t}\right)\notag\\
=&\frac{(1-\delta)^{2-p's}}{\iota p \delta }e^{-\frac{\iota p \delta}{1-\delta}}\leq\frac{(1-\delta)^{2-p's}}{\iota p \delta } =:  \iota^{-1} C_3(\delta,s,p) .
\end{align}
\noindent\textbf{Case 2: $p's>1$.} With the change of variables $h=1/(1-t)$ and $z=\iota p\delta h$, we obtain
\begin{align}\label{1detecontr2}
 \int^1_\delta e^{-\frac{\iota p \delta}{1-t}}(1-t)^{-p's}{\dd}t&=\int^{+\infty}_{\frac{1}{1-\delta}} e^{-\iota p \delta h} h^{p's-2}{\dd}h\leq \frac{1}{(\iota p \delta)^{p's-1}}\int^{+\infty}_0 e^{-z} z^{p's-2}{\dd}z\notag\\
 &=\frac{\Gamma(p's-1)}{(\iota p \delta)^{p's-1}}=:\iota^{1-p's} C_4(\delta,s,p),
\end{align}
where $\Gamma(\cdot)$ denotes the Gamma function.

From \eqref{BerwaldhARDdenumer}--\eqref{1detecontr1} we have
\[
\int_{{\mathbb{B}^n_1(\mathbf{0})}}|u_\iota|^p r^{p's}{\dd}x =
\begin{cases}
n\omega_n\left[C_2(\delta,s,p,n)+\iota^{-1} \max\left\{1,\delta^{p's+n-1}\right\} C_3(\delta,s,p) \right], & \text{ if }p's\leq 1,\\[4pt]
n\omega_n\left[ C_2(\delta,s,p,n)+\iota^{1-p's} \max\left\{1,\delta^{p's+n-1}\right\} C_4(\delta,s,p) \right], & \text{ if }p's> 1.
\end{cases}
\]
Combining this estimate with \eqref{BerwaldFstarp} and \eqref{BerwaldhARDdenome}, and using the fact that $p'-p's+1>0$, we get
$\lim_{\iota\rightarrow 0^+}\mathscr{U}_{\mathbf{0},p,s}(u_{\iota})=0$,
where
 $\mathscr{U}_{\mathbf{0},p,s}$ is the functional from  Definition \ref{function11}.
The same approximation argument as in the proof of \eqref{Berw4} then yields
\begin{equation*}
\inf_{f\in C^\infty_0(\mathbb{B}^n_1(\mathbf{0}))\backslash\{0\}}\mathscr{U}_{\mathbf{0},p,s} (f)\leq \liminf_{\iota \rightarrow 0^+} \mathscr{U}_{\mathbf{0},p,s} (u_{\iota})=0,
\end{equation*}
which establishes the failure of the generalized uncertainty principle.
\end{proof}


\begin{proof}[Proof of Theorem \ref{Berwmainth}]
Statement \eqref{CKNFIALBE1} follows from Theorem \ref{Berwalddist}/\eqref{Berwald_ii} because every Minkowski space has finite reversibility.
Statement \eqref{CKNFIALBE2} is proved in Theorem \ref{bersolnol}, while Statements \eqref{CKNFIALBE4} and \eqref{CKNFIALBE5} are established by Theorem \ref{MainThmBerw} together with \eqref{granotwoff}.
\end{proof}

In order to show Theorem \ref{sbeer}, we need the following result.
\begin{lemma}\label{lemma_function}
$\ds \lim_{a \rightarrow 0^+} \int^{1}_0 \frac{ t^n }{( a +  t)^{n+1}}  {\dd}t = +\infty.$
\end{lemma}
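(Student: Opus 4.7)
The plan is to exploit the fact that the integrand behaves like $1/t$ near the origin as $a\to 0^+$, and then to harvest the logarithmic divergence of $\int_0^1 t^{-1}\,{\dd}t$ via an elementary lower bound. I would not try to expand $(u-a)^n$ via the binomial theorem after the substitution $u=a+t$; while that works, it produces more terms than necessary.

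First I would restrict attention to the subinterval $[a,1]$, on which the simple inequality $a+t\le 2t$ holds. This gives
\[
\frac{t^n}{(a+t)^{n+1}}\;\ge\;\frac{t^n}{(2t)^{n+1}}\;=\;\frac{1}{2^{n+1}\,t}, \qquad \forall t\in[a,1].
\]
Therefore
\[
\int_0^1 \frac{t^n}{(a+t)^{n+1}}\,{\dd}t\;\ge\;\int_a^1\frac{t^n}{(a+t)^{n+1}}\,{\dd}t\;\ge\;\frac{1}{2^{n+1}}\int_a^1\frac{{\dd}t}{t}\;=\;\frac{1}{2^{n+1}}\ln\!\frac{1}{a},
\]
which tends to $+\infty$ as $a\to 0^+$, and the claim follows.

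There is essentially no obstacle: the statement is a classical logarithmic-divergence lemma, and the only mild step is verifying the inequality $a+t\le 2t$ on $[a,1]$, which is immediate. An alternative, equally painless, route would be to observe that $\partial_a \frac{t^n}{(a+t)^{n+1}}=-(n+1)\frac{t^n}{(a+t)^{n+2}}<0$, so the integrand increases monotonically to $1/t$ as $a\downarrow 0$, and apply the monotone convergence theorem to obtain $\int_0^1 t^{-1}\,{\dd}t=+\infty$ in the limit. I would prefer the direct bound above because it avoids invoking convergence theorems and also yields the quantitative rate $\gtrsim \ln(1/a)$, which may be useful in the subsequent application to Theorem \ref{sbeer}.
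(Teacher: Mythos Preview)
Your argument is correct, and in fact you anticipated the paper's method when you remarked that you would \emph{not} expand $(u-a)^n$ binomially after $u=a+t$: that is precisely what the authors do. They write $t^n=(a+t-a)^n$, expand to obtain
\[
\int_0^1\frac{t^n}{(a+t)^{n+1}}\,{\dd}t=\ln\bigl(1+a^{-1}\bigr)-\sum_{k=1}^n\frac{1}{k}\binom{n}{k}\bigl[(-a)^k(a+1)^{-k}-(-1)^k\bigr],
\]
and then let $a\to0^+$. Their route gives an exact closed-form expression (with the logarithmic term plus bounded corrections), whereas your direct lower bound $\int_a^1(2t)^{-(n+1)}t^n\,{\dd}t=2^{-(n+1)}\ln(1/a)$ is shorter, avoids the binomial bookkeeping, and already captures the same divergence rate; nothing more precise is used downstream in Theorem~\ref{sbeer}.
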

\begin{proof}
A direct computation yields
\begin{align*}
\int^{1}_0\frac{ t^n }{( a +  t)^{n+1}}{\dd}t &= \int^{1}_0\frac{ (a+t-a)^n }{( a +  t)^{n+1}} {\dd}t=  \int^{1}_0\left[ \frac{1}{a+t} + \sum^{n}_{k=1} \binom{n}{k} (-a)^{k} (a+t)^{-k-1}\right] {\dd}t\\
& = \ln\left(1+a^{-1}\right) - \sum^{n }_{k=1} \frac{1}{k}\binom{n}{k}\left[ (-a)^{k} (a+1)^{-k} - (-1)^k \right].
\end{align*}
Then the proof is completed as $a \rightarrow 0^+$.
\end{proof}

\begin{proof}[Proof of Theorem \ref{sbeer}]
\textbf{(\ref{BerwCKN_i})}
 Let $r(x):=d_{\B}(\mathbf{0},x)$ and let $(t,y)$ denote the Euclidean polar coordinate system.
The following relations will be used repeatedly:
\begin{align}
t  = |x| = \frac{r}{1+r}, \qquad
{\dd}x  =  t^{n-1} {\dd}t \,{\dd}\nu_{\mathbb{S}^{n-1}}(y)
   =\frac{1 }{(r+1)^2} \left(\frac{r}{r+1} \right)^{n-1}   {\dd}r \,{\dd}\nu_{\mathbb{S}^{n-1}}(y), \label{Berwald_dxdr}
\end{align}
where ${\dd}\nu_{\mathbb{S}^{n-1}}$ is the standard measure of Euclidean unit  sphere $\mathbb{S}^{n-1}$.

Define a smooth function $v_{\iota}$ on $\mathbb{B}^n_1(\mathbf{0})$ by
\begin{equation*}
v_\iota : = - e^{- \iota r^{1+\frac{\mu}{p}}},
\end{equation*}
where constants $\iota$ and $\mu$ satisfy
\[ 0<\iota< \min\left\{ \frac{1}{p' (m-1)}, \frac{1}{m} \right\}, \qquad  \mu=\frac{3}{2}. \]
As in the proof of Theorem \ref{MainThmBerw}/\eqref{Berw4} we have
\begin{equation*}
\inf_{f\in C^\infty_0(\mathbb{B}^n_1(\mathbf{0}))\backslash\{0\}}\mathscr{C}_{\mathbf{0},p,m,s}(f)\leq \mathscr{C}_{\mathbf{0},p,m,s}(v_{\iota}),
\end{equation*}
where $\mathscr{C}_{\mathbf{0},p,m,s}$ is defined in Definition \ref{function11}. In view of \eqref{granotwoff}, it suffices to show
\begin{equation}\label{limconcerb}
\lim_{\iota\rightarrow 0^+}\mathscr{C}_{\mathbf{0},p,m,s}(v_{\iota})=0.
\end{equation}

Firstly, by \eqref{Berwald_dxdr},
the denominator of $\mathscr{C}_{\mathbf{0},p,m,s}(v_{\iota})$ satisfies
\begin{align} \label{v_denom}
\int_{{\mathbb{B}^n_1(\mathbf{0})}}  {|v_{\iota}|^{m}}{r^{s-1}}{\dd}x
&\geq   n\omega_n \int^{1}_{\frac{1}{2}} e^{- m\iota r^{1+\frac{\mu}{p}} }   \frac{r^{s-1}}{(1+r)^2} \left(\frac{r}{1+r} \right)^{n-1} {\dd}r \geq   \frac{n\omega_n}{  3^{n-1} e}\min\left\{2^{-2},2^{-1-s}\right\}.
\end{align}

Secondly, since $\B^*({\dd}r)=1$, a direct calculation together with  \eqref{Berwald_dxdr} yields
\begin{align}
\int_{\mathbb{B}^n_1(\mathbf{0})} {\B}^{*p}({\dd}v_{\iota}) {\dd}x
& = \iota^p \left(1+\frac{\mu}{p}\right)^p  \int_{\mathbb{B}^n_1(\mathbf{0})} r^{\mu} e^{- \iota p r^{1+\frac{\mu}{p}}} {\dd}x \notag\\
& = \iota^p n\omega_n \left(1+\frac{\mu}{p}\right)^p \int_{0}^{+\infty} e^{- \iota p r^{1+\frac{\mu}{p}}}
\frac{r^{\mu}}{(r+1)^2} \left( \frac{r}{r+1} \right)^{n-1}  {\dd}r \notag\\
& \leq \iota^p n\omega_n  \left(1+\frac{\mu}{p}\right)^p \left(1  +     \int_{1}^{+\infty} e^{- \iota p r^{1+\frac{\mu}{p}}}r^{\mu-2}  {\dd}r\right). \label{v_numer_first0}
\end{align}
To analyze  $\int_{1}^{+\infty} e^{- \iota p r^{1+\frac{\mu}{p}}}
r^{\mu-2}  {\dd}r$, let $z:= \iota p r^{1+\frac{\mu}{p}}$.
Since $\frac{p(\mu-2) - \mu}{p+\mu} > -1$, we have
\begin{align*}
\int_{1}^{+\infty} e^{- \iota p r^{1+\frac{\mu}{p}}}
r^{\mu-2}  {\dd}r
 &\leq \iota^{\frac{p(1-\mu)}{p+\mu}} \frac{p^{\frac{p(2-\mu)+\mu}{p+\mu}} }{p+\mu} \int_{0}^{+\infty} e^{- z}
z^{\frac{p(\mu-2) - \mu}{p+\mu}}  {\dd}z\\
&=\iota^{\frac{p(1-\mu)}{p+\mu}} \frac{p^{\frac{p(2-\mu)+\mu}{p+\mu}} }{p+\mu}\Gamma\left(\frac{p(\mu-2) - \mu}{p+\mu}+1\right)=:\iota^{\frac{p(1-\mu)}{p+\mu}}C_1(p).
\end{align*}
 Then it follows from \eqref{v_numer_first0} that
\begin{align}\label{v_numer_first}
\left( \int_{\mathbb{B}^n_1(\mathbf{0})} {\B}^{*p}({\dd}v_{\iota}) {\dd}x \right)^{1/p}
\leq  \iota^{ \frac{p+1}{p+\mu}} \left( \iota^{\frac{p(\mu-1)}{p+\mu}} +  C_1(p) \right)^{1/p} (n \omega_n)^{1/p} \left(1+\frac{\mu}{p}\right).
\end{align}

Thirdly,  to estimate the second factor of the numerator of  $\mathscr{C}_{\mathbf{0},p,m,s}(v_{\iota})$, we obtain
\begin{align} \label{v_numer_second1}
 \int_{{\mathbb{B}^n_1(\mathbf{0})}}  {|v_\iota|^{p'(m-1)}}{r^{p's}} {\dd}x
 &=n\omega_n \int^{+\infty}_0 e^{-\iota p'(m-1) r^{1+\frac{\mu}{p}} }   \frac{r^{p's}}{(1+r)^2} \left(\frac{r}{1+r} \right)^{n-1} {\dd}r \notag \\
 &\leq n\omega_n \left(  \int^{1}_0 e^{-\iota p'(m-1) r^{1+\frac{\mu}{p}} } r^{p's+n-1}{\dd}r + \int_{1}^{+\infty}   e^{-\iota p'(m-1) r^{1+\frac{\mu}{p}} } r^{p's-2} {\dd}r  \right).
\end{align}
Since
\begin{align*}
p's + n =\frac{p(n+s)-n}{p-1}>\frac{p+m(n-p)-n}{p-1}=\frac{(m-1)(n-p)}{p-1}>0,
\end{align*}
we have
\begin{align}\label{v_numer_second_0delta}
\int^{1}_0 e^{-\iota p'(m-1) r^{1+\frac{\mu}{p}} } r^{p's+n-1}{\dd}r \leq   \int^{1}_0   r^{p's+n-1}{\dd}r
= \frac{1}{p's+n}.
\end{align}
The remainder of the proof is divided into three cases:  $s\leq \frac{2}{p'}$, $\frac{2}{p'}<s<2$ and $s=2$.

\smallskip

{\bf Case 1: $s\leq \frac{2}{p'}$.} In this case,
\begin{align*}
\int_{1}^{+\infty}   e^{-\iota p'(m-1) r^{1+\frac{\mu}{p}} } r^{p's-2} {\dd}r
\leq \int_{1}^{+\infty}   e^{-\iota p'(m-1) r} {\dd}r
\leq  \frac{\iota^{-1}}{p'(m-1)}.
 \end{align*}
Together with \eqref{v_numer_first}--\eqref{v_numer_second_0delta}, it follows that
\begin{align}\label{v_numer_1}
 & \left( \int_{\mathbb{B}^n_1(\mathbf{0})} {\B}^{*p}({\dd}v_{\iota}) {\dd}x \right)^{1/{p}} \left( \int_{{\mathbb{B}^n_1(\mathbf{0})}}  {|v_\iota|^{p'(m-1)}}{r^{p's}} {\dd}x \right)^{1/{p'}} \notag\\
 \leq \  & \iota^{\frac{p+1}{p+\mu}} \left( \frac{\iota^{-1}}{p'(m-1)} +  \frac{1}{p's+n}  \right)^{1/{p'}}  \left(\iota^{\frac{p(\mu-1)}{p+\mu}} + C_1(p) \right)^{1/{p}} n\omega_n \left(1+\frac{\mu}{p}\right).
\end{align}
The lowest power of $\iota$ on the right-hand side of the above inequality is
\[ \frac{p+1}{p+\mu} - \frac{1}{p'} =  \frac{p(2-\mu) + \mu}{p(p+\mu)} > 0, \]
which combined with \eqref{v_denom} and \eqref{v_numer_1} implies \eqref{limconcerb}.

\smallskip

{\bf Case 2:  $\frac{2}{p'}<s<2$.}
Let $z:= \iota p'(m-1) r^{1+\frac{\mu}{p}}$. Since
$\frac{p(p's-2)-\mu}{p+\mu}  > -1$,
we have
\begin{align}
\int_{1}^{+\infty}   e^{-\iota p'(m-1) r^{1+\frac{\mu}{p}} } r^{p's-2} {\dd}r
& \leq  \iota^{- \frac{p(p's-1)}{p+\mu}} \left(\frac{p}{p+\mu}\right) \left[p'(m-1)\right]^{- \frac{p(p's-1)}{p+\mu}} \int^{+\infty}_0 e^{-z}  z^{ \frac{p(p's-2)-\mu}{p+\mu}}   {\dd}z\notag\\
&= \iota^{- \frac{p(p's-1)}{p+\mu}}  \left(\frac{p}{p+\mu}\right)\left[p'(m-1)\right]^{- \frac{p(p's-1)}{p+\mu}}\Gamma\left(\frac{p(p's-2)-\mu}{p+\mu} +1 \right)\notag\\
&=:\iota^{- \frac{p(p's-1)}{p+\mu}}   C_2(s, m,  p). \label{v_numer_second_deltainfty}
\end{align}
The inequalities \eqref{v_numer_first}--\eqref{v_numer_second_0delta},  and \eqref{v_numer_second_deltainfty} then yield
\begin{align}\label{v_numer}
 & \left( \int_{\mathbb{B}^n_1(\mathbf{0})} {\B}^{*p}({\dd}v_{\iota}) {\dd}x \right)^{1/p} \left( \int_{{\mathbb{B}^n_1(\mathbf{0})}}  {|v_\iota|^{p'(m-1)}}{r^{p's}} {\dd}x \right)^{1/p'} \notag\\
 \leq \  & n\omega_n \iota^{\frac{p+1}{p+\mu}} \left( \iota^{- \frac{p(p's-1)}{p+\mu}} C_2(s, m, p ) + \frac{1}{p's+n} \right)^{{1}/{p'}} \left(\iota^{\frac{p(\mu-1)}{p+\mu}}  + C_1(p) \right)^{{1}/{p}}\left(1+\frac{\mu}{p}\right).
\end{align}
The lowest power of $\iota$ on the right-hand side of the above inequality is
\begin{equation} \label{Berwald_s_range}
\frac{p+1}{p+\mu} - \frac{p(p's-1)}{p'(p+\mu)} = \frac{p(2-s)}{p+\mu} > 0,
\end{equation}
which combined with \eqref{v_denom} and \eqref{v_numer} implies \eqref{limconcerb}.

\smallskip

{\bf Case 3:  $s=2$.}
We claim that the denominator of $\mathscr{C}_{\mathbf{0},p,m,s}(v_\iota)$ diverges, i.e.,
\begin{align} \label{Specialfunc}
\lim_{\iota\rightarrow 0^+}\int_{{\mathbb{B}^n_1(\mathbf{0})}}  {|v_{\iota}|^{m}}{r^{s-1}}{\dd}x
=+\infty.
\end{align}
Indeed,
by using \eqref{Berwald_dxdr} and setting $\bar{z} : = \iota m r^{1+\frac{\mu}{p}}$, we obtain
\begin{align*}
\int_{{\mathbb{B}^n_1(\mathbf{0})}}  {|v_{\iota}|^{m}}{r^{s-1}}{\dd}x&= n\omega_n\int^{+\infty}_0 e^{-\iota m r^{1+\frac{\mu}{p}} }   \frac{r^n }{(1+r)^{n+1}}  {\dd}r
 =  n\omega_n\int^{+\infty}_0 e^{- \bar{z} } \frac{ \bar{z} ^{\frac{n p}{p+\mu}} }{\left[ (\iota m)^{\frac{p}{p+\mu}}+  \bar{z} ^{\frac{p}{p+\mu}} \right]^{n+1}}     {\dd}\bar{z}^{\frac{p}{p+\mu}} \notag \\
&\geq \frac{n\omega_n}{e}\int^{1}_0 \frac{ \bar{z} ^{\frac{n p}{p+\mu}} }{\left[ (\iota m)^{\frac{p}{p+\mu}}+  \bar{z} ^{\frac{p}{p+\mu}} \right]^{n+1}}     {\dd}\bar{z}^{\frac{p}{p+\mu}}.   \notag
\end{align*}
Put $\bar{t}: = \bar{z}^{\frac{p}{p+\mu}}$ and  $a:=(\iota m)^{\frac{p}{p+\mu}}$.
Then Lemma \ref{lemma_function}
 implies \eqref{Specialfunc}.
 For $s=2$ the numerator of   $\mathscr{C}_{\mathbf{0},p,m,s}(v_\iota)$ remains finite and positive (see \eqref{v_numer} and \eqref{Berwald_s_range}); consequently \eqref{limconcerb} follows.
This   completes the proof of Statement (\ref{BerwCKN_i}).

\textbf{(\ref{BerwCKN_ii})}
Equations \eqref{Berwaldphi} and \eqref{exparealphbeata} yield
$\B(x,y) = \frac{(\alpha+\beta)^2}{\alpha} \leq  \frac{(\alpha+\alpha)^2}{\alpha}= 4 \alpha(x,y). $
Then  \eqref{coFinsler} implies
\[
\B^{*}(x,\xi)=\sup_{y\in T_x\mathbb{B}^n_1(\mathbf{0})\backslash\{0\}} \frac{\langle \xi,y \rangle}{\B(x,y)} \geq \sup_{y\in T_x\mathbb{B}^n_1(\mathbf{0})\backslash\{0\}} \frac{\langle \xi,y \rangle}{4\alpha(x,y)}=\frac{1}{4} \alpha^*(x,\xi),
\]
where $\alpha^*(x,\xi)$ is the co-metric of the Riemannian metric $\alpha$.

For any $f\in C^{\infty}_0 (\mathbb{B}^n_1(\mathbf{0})) \backslash \{0\}$,
the above inequality together with \eqref{LemmaBerwaldGradr}, \eqref{dualff*}, and \eqref{exparealphbeata}  yields
\begin{align*}
\left| \left\langle {\dd}f, \nabla r    \right\rangle  \right| & = \frac{(1-|x|)^2}{|x|}\left| \left\langle {\dd}f, x    \right\rangle \right| \leq
\frac{(1-|x|)^2}{|x|}  \alpha^*(x,{\dd}f) \alpha(x,x)\\
 &= \frac{1}{(1+|x|)^2} \alpha^*(x,{\dd}f)
\leq    \frac{4}{(1+|x|)^2} \B^*(x,{\dd}f) \leq  4 \B^*(x,{\dd}f).
\end{align*}
Consequently,
\begin{align}
\int_{\mathbb{B}^n_1(\mathbf{0})}  {|f|^{m-1}}{r^{s}} \left| \left\langle {\dd}f, \nabla r    \right\rangle \right| {\dd}x
& \leq 4 \int_{\mathbb{B}^n_1(\mathbf{0})}  {|f|^{m-1}}{r^{s}}  {\B}^*({\dd}f) {\dd}x  \notag\\
& \leq 4 \left( \int_{\mathbb{B}^n_1(\mathbf{0})}  \B^{*p}({\dd}f) {\dd}x \right)^{1/{p}} \left( \int_{\mathbb{B}^n_1(\mathbf{0})} |f|^{p'(m-1)} r^{p's} {\dd}x \right)^{1/{p'}}. \label{Berwalds>=2numer}
\end{align}

Let $\dii$ be the Euclidean divergence in $\mathbb{R}^n$. A direct computation  combined with \eqref{LemmaBerwaldGradr} and \eqref{Berwald_dxdr}  gives
\begin{equation}\label{Berwald_Laplace}
\Delta r: =\dii\circ\nabla r=  \frac{\partial }{\partial x^i} \left[ \frac{(1-|x|)^2}{|x|} x^i \right] =  \frac{n-1}{|x|}+(n+1)|x|-2n
= \frac{n-1}{r} - \frac{n+1}{1+r}.
\end{equation}
On the one hand, the divergence theorem together with    $\langle {\dd}r,\nabla r\rangle=1$ and \eqref{Berwalds>=2numer} implies
\begin{align}\label{leftberwres}
&\quad \int_{\mathbb{B}^n_1(\mathbf{0})} {|f|^{m}}{r^{s}}\Delta r {\dd}x + s \int_{\mathbb{B}^n_1(\mathbf{0})} {|f|^{m}}{r^{s-1}} {\dd}x \notag\\
& =  - \int_{\mathbb{B}^n_1(\mathbf{0})} \left\langle {\dd}\left( {|f|^{m}}{r^{s}}\right), \nabla r    \right\rangle{\dd}x + s \int_{\mathbb{B}^n_1(\mathbf{0})} {|f|^{m}}{r^{s-1}} {\dd}x
 \leq m \int_{\mathbb{B}^n_1(\mathbf{0})}  {|f|^{m-1}}{r^{s}}\left| \left\langle {\dd}f, \nabla r    \right\rangle \right| {\dd}x
\notag\\
&\leq 4m \left( \int_{\mathbb{B}^n_1(\mathbf{0})}  \B^{*p}({\dd}f) {\dd}x \right)^{1/{p}} \left( \int_{\mathbb{B}^n_1(\mathbf{0})} |f|^{p'(m-1)} r^{p's} {\dd}x \right)^{1/{p'}}.
\end{align}
On the other hand, it follows from  \eqref{Berwald_Laplace} that
\begin{align}
& \quad \int_{\mathbb{B}^n_1(\mathbf{0})} {|f|^{m}}{r^{s}} \Delta r {\dd}x  + s \int_{\mathbb{B}^n_1(\mathbf{0})} {|f|^{m}}{r^{s-1}} {\dd}x \notag\\
&  = (s-2) \int_{\mathbb{B}^n_1(\mathbf{0})}  {|f|^{m}}{r^{s-1}}   {\dd}x  + (n+1) \int_{\mathbb{B}^n_1(\mathbf{0})} {|f|^{m}} \frac{ r^{s-1}}{1+r}  {\dd}x  \geq  (s-2) \int_{\mathbb{B}^n_1(\mathbf{0})}  {|f|^{m}}{r^{s-1}}   {\dd}x, \label{BerwaldCKNs>=2_secondpart}
\end{align}
with equality if and only if $f\equiv0$.
Now Statement \eqref{BerwCKN_ii} follows by \eqref{leftberwres}, \eqref{BerwaldCKNs>=2_secondpart}, and \eqref{granotwoff}.
\end{proof}

The optimal constant is central to the analysis of functional inequalities. For reversible Finsler manifolds, the sharp constants of many classical inequalities coincide with their Riemannian counterparts (cf.~Huang--Krist\'aly--Zhao \cite{HKZ}, Krist\'aly--Repov\v{s}\cite{KR16}, and Zhao \cite{ZhaoHardy}). In the finite-reversibility setting, Mester--Peter--Varga \cite{MPV} proved that several fundamental inequalities remain valid, while Kaj\'ant\'o \cite{Ka} showed that  their sharp constants equal the Riemannian constants divided by the reversibility.

In contrast, Berwald's metric possesses infinite reversibility, thereby placing it outside the scope of all preceding results. The threshold behavior stated in Theorem \ref{sbeer} therefore represents a new phenomenon in this singular regime. Whether the constant $\frac{2-s}{4m}$ in Theorem \ref{sbeer}/\eqref{BerwCKN_ii} is sharp remains an intriguing open question.

\section{Funk metric spaces}\label{sectionFunk}
This section investigates Funk metric spaces, a fundamental class of projectively flat Finsler manifolds with constant negative curvature. Subsection \ref{subsectionBasicFunk} collects their basic geometric properties, while Subsection \ref{SobolevFunk} establishes the nonlinearity of their Sobolev spaces and the failure of the uncertainty principle and CKN inequality. Theorem \ref{funkcknf} is established, which restates Theorem \ref{ThmFunkInequalities}/(\ref{FunkCKN}) for Funk metric spaces.
\subsection{Background of Funk metric spaces}\label{subsectionBasicFunk}

\begin{definition}\label{funkdef}
A Finsler metric $\mathsf{F}$ on a bounded domain $\Omega\subset \mathbb{R}^n$ is called a  {\it Funk metric} if
\begin{equation}\label{funkdef}
x + \frac{y}{\mathsf{F}(x,y)}  \in \partial \Omega,
\end{equation}
for  every $x\in \Omega$ and  $ y \in T_x \Omega\backslash\{0\}$. The pair $(\Omega,\mathsf{F})$ is called a  {\it Funk metric space}.
\end{definition}

In \eqref{funkdef}   both $x$ and $y/\mathsf{F}(x,y)$ are  interpreted as vectors in $\mathbb{R}^n$ with their sum emanating from  the origin.
The following characterization of Funk metrics is due to Li~\cite{Li} and Shen~\cite{Sh1}.
\begin{theorem}[\cite{Li, Sh1}]\label{funkmetrisufficnecessy}
A   pair $(\Omega,\mathsf{F})$ is a Funk metric space if and only if
there is a (unique) Minkowski norm $\phi$ on $\mathbb{R}^n$ such that
\begin{equation} \label{phiF}
	\partial\Omega=\phi^{-1}(1),\qquad \mathsf{F}(x,y) = \phi{\left(y + x \mathsf{F}(x,y)\right)}.
\end{equation}
In particular, $\Omega$ is a bounded strongly convex domain in $\mathbb{R}^n$ given by
\begin{equation}\label{omegedom}
	\Omega=\{x\in \mathbb{R}^n\,:\, \phi(x)<1\}.
\end{equation}
\end{theorem}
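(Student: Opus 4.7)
I would prove the two directions separately and handle uniqueness at the end. For the forward direction ($\Rightarrow$), the natural candidate is $\phi(y) := \mathsf{F}(\mathbf{0}, y)$, i.e., the Minkowski norm of $\mathsf{F}$ at the origin (which we may assume lies in $\Omega$, as is implicit in the statement via \eqref{omegedom}). Applying \eqref{funkdef} at $x = \mathbf{0}$ shows immediately that $y/\phi(y) \in \partial\Omega$ whenever $y \neq 0$, which by positive $1$-homogeneity identifies $\partial\Omega$ with $\phi^{-1}(1)$, and then connectedness of $\Omega$ together with $\mathbf{0} \in \Omega$ forces $\Omega = \{\phi < 1\}$. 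The relation $\mathsf{F}(x,y) = \phi(y + x\mathsf{F}(x,y))$ then follows by a single scaling step: applying $\phi$ to $\mathsf{F}(x,y) \cdot \bigl(x + y/\mathsf{F}(x,y)\bigr)$, which lies in $\mathsf{F}(x,y)\,\partial\Omega$, and using $1$-homogeneity.

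For the reverse direction ($\Leftarrow$), starting from a Minkowski norm $\phi$, I would define $\Omega := \{\phi < 1\}$, which is a bounded strongly convex domain (boundedness from the non-degeneracy of $\phi$, strong convexity from the positive-definite Hessian of $\phi^2$). I would then construct $\mathsf{F}$ implicitly: for $(x,y) \in T\Omega \setminus \{0\}$, define $\mathsf{F}(x,y)$ as the unique positive number $t$ satisfying $t = \phi(y + tx)$. The equivalent form $\phi\bigl(x + y/t\bigr) = 1$ identifies $1/t$ as the unique positive value of $s$ at which the ray $x + sy$ exits $\Omega$, whose existence and uniqueness are guaranteed by the strong convexity of $\partial\Omega$ and the fact that $x \in \Omega$. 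The Funk condition \eqref{funkdef} is then immediate from the construction.

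The main obstacle is showing that this implicitly defined $\mathsf{F}$ is a genuine Finsler metric. Smoothness on $T\Omega \setminus \{0\}$ comes from the implicit function theorem applied to $\Phi(x,y,t) := t - \phi(y+tx)$; the required non-vanishing of $\partial_t \Phi = 1 - (\partial_i \phi)(y+tx)\,x^i$ at the solution can be extracted from \eqref{weakswaza} combined with $\phi(x) < 1$. Positive $1$-homogeneity in $y$ is a direct scaling check on the defining equation. The positive-definiteness of the fundamental tensor $g_{ij} = \tfrac12 \partial^2 \mathsf{F}^2/\partial y^i \partial y^j$ is the most delicate point: I would obtain it by differentiating the implicit relation twice in $y$ and invoking the strong convexity of $\phi$, but this bookkeeping requires care because $\mathsf{F}$ appears nonlinearly inside $\phi$. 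Uniqueness of $\phi$ is automatic: two Minkowski norms sharing the indicatrix $\phi^{-1}(1) = \partial\Omega$ must coincide on every ray from the origin, hence everywhere, by $1$-homogeneity.
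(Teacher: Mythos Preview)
The paper does not supply its own proof of this theorem: it is quoted verbatim from the references \cite{Li,Sh1} (note the bracketed citation in the theorem header), and the text immediately moves on to an example. There is therefore no in-paper argument to compare your proposal against.

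That said, your outline is the standard one and is essentially correct. A few remarks. In the forward direction, the step ``connectedness of $\Omega$ together with $\mathbf{0}\in\Omega$ forces $\Omega=\{\phi<1\}$'' deserves a word more: you have shown $\partial\Omega\subset\phi^{-1}(1)$, and since each ray from $\mathbf{0}$ meets $\phi^{-1}(1)$ exactly once, the bounded connected component of $\mathbb{R}^n\setminus\phi^{-1}(1)$ containing $\mathbf{0}$ is $\{\phi<1\}$; you need $\Omega$ to coincide with this component, which follows because $\Omega$ is open, connected, bounded, contains $\mathbf{0}$, and has $\partial\Omega\subset\phi^{-1}(1)$. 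In the reverse direction, your use of the implicit function theorem is right, and the non-vanishing of $\partial_t\Phi$ is exactly where \eqref{weakswaza} enters: at the solution, $y+tx\neq 0$ and $\phi(x)<1$ give $(\partial_i\phi)(y+tx)\,x^i\le\phi(x)<1$. The positive-definiteness of $(g_{ij})$ is indeed the only genuinely technical point; one clean way to handle it is to observe that the indicatrix $\{y:\mathsf{F}(x,y)=1\}$ is the translate $\partial\Omega-x$, which inherits strong convexity from $\partial\Omega=\phi^{-1}(1)$, and strong convexity of the indicatrix is equivalent to positive-definiteness of the fundamental tensor.
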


\begin{example} Let $\phi(x)=|x|$  be the Euclidean norm of $\mathbb{R}^n$. Then $\Omega=\mathbb{B}^n_1({\mathbf{0}})$, and the corresponding Funk metric is
\begin{align*}
 \mathsf{F}(x,y) = \frac{ \sqrt{(1-|x|^2)|y|^2 + \langle x, y \rangle^2}}{1-|x|^2} + \frac{\langle x, y \rangle}{1-|x|^2}, \qquad \forall (x,y)\in T\mathbb{B}^n_1(\mathbf{0}),
 \end{align*}
which coincides with the metric \eqref{Funkex1intro} discussed in the Introduction.
\end{example}

We collect several fundamental properties of Funk metric spaces that will be used repeatedly.
\begin{theorem}[\cite{ShenSpray}]\label{bascifunthm}
Let $(\Omega,\mathsf{F},\m_{BH})$  be an $n$-dimensional Funk metric space equipped with the Busemann--Hausdorff measure. Then the following statements are true:
\begin{enumerate}[\rm (i)]

\item $(\Omega,\mathsf{F})$ is a Cartan--Hadamard manifold with $\mathbf{K}=-\frac14$;

\item the S-curvature is constant, i.e., $\mathbf{S}=\frac{n+1}{2}$;

\item\label{ShenSpray}  $\m_{BH}=\sigma \mathscr{L}^n$ with
\begin{equation}\label{Vol_Funk}
 \sigma  = \frac{\vol(\mathbb{B}^n_1)}{\vol(\Omega)}=\frac{\omega_n}{\vol(\Omega)}.
\end{equation}
 \end{enumerate}
\end{theorem}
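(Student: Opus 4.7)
The plan is to extract the three claims from the structural identity in \eqref{phiF}. The cornerstone is the \emph{Funk equation}
\[
\mathsf{F}_{x^k}(x,y) = \mathsf{F}(x,y)\,\mathsf{F}_{y^k}(x,y),
\]
obtained by differentiating $\mathsf{F}(x,y) = \phi(y + x\mathsf{F}(x,y))$ in $x^k$ and $y^k$ and eliminating the common factor $\phi_{v^i}(y + x\mathsf{F})$. Once this identity is in hand, every remaining assertion reduces to direct computation. I would address (iii) first, since it feeds into (ii).

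For (iii), the tangent unit ball is $B_xM = \{y : \mathsf{F}(x,y) < 1\}$. From \eqref{funkdef} the ray $t \mapsto x + ty$ hits $\partial\Omega$ precisely at $t = 1/\mathsf{F}(x,y)$, so $\mathsf{F}(x,y) < 1$ if and only if $x + y \in \Omega$. Hence $B_xM = \Omega - x$ is merely a translate of $\Omega$, giving $\vol(B_xM) = \vol(\Omega)$ independently of $x$; the defining formula for $\m_{BH}$ then yields the constant density $\sigma = \omega_n / \vol(\Omega)$.

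For (i) and (ii), the classical consequence of the Funk equation is that the geodesic coefficients collapse to $G^i = \tfrac{1}{2}\mathsf{F}(x,y)\, y^i$, which can be verified by substituting $g_{ij} = \mathsf{F}\,\mathsf{F}_{y^iy^j} + \mathsf{F}_{y^i}\mathsf{F}_{y^j}$ into the defining formula for $G^i$ and converting every $x$-derivative into a $y$-derivative via the Funk equation. Writing $P := \mathsf{F}/2$, the standard formula for the Riemann curvature of a projectively flat spray gives $R^i_{\,k} = \Xi\,\delta^i_k + \tau_k y^i$ with $\Xi = P^2 - y^m P_{x^m}$; another pass through the Funk equation together with Euler's identity $y^m \mathsf{F}_{y^m} = \mathsf{F}$ reduces this to $\Xi = -\mathsf{F}^2/4$. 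Inserting the result into $g_y(R_y v, v)$ and simplifying with $g_{ij}y^i = \mathsf{F}\,\mathsf{F}_{y^j}$ produces
\[
g_y(R_y v, v) = -\tfrac{1}{4}\bigl[g_y(y,y)\,g_y(v,v) - g_y(y,v)^2\bigr],
\]
which is exactly $\mathbf{K} \equiv -\tfrac{1}{4}$. For the $S$-curvature, since $\sigma$ is constant by (iii), the second term in \eqref{Scurvature} vanishes, while $\partial G^i/\partial y^i = \tfrac{1}{2}\mathsf{F}_{y^i} y^i + \tfrac{n}{2}\mathsf{F} = \tfrac{n+1}{2}\mathsf{F}$ by Euler, giving $\mathbf{S} = \tfrac{n+1}{2}\mathsf{F}$, i.e., $\mathbf{S} = \tfrac{n+1}{2}$ as a ratio. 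The Cartan--Hadamard property then follows from the already-proved $\mathbf{K} = -1/4 \le 0$, the simple connectivity of the convex domain $\Omega$, and forward completeness (argued as in \eqref{gest}: the forward distance from any interior point to $\partial\Omega$ is infinite, so every geodesic extends for all positive time).

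The main obstacle is the step $G^i = \tfrac{1}{2}\mathsf{F}\, y^i$, which requires disciplined but somewhat intricate use of the Funk equation to cancel the Christoffel-type combinations of $g_{ij}$. After that, the curvature computations are algebraic and the volume identity is a matter of convex geometry, so the rest of the proof is essentially routine.
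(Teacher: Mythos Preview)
Your proof outline is correct and follows the standard argument for these classical facts. Note, however, that the paper does not supply its own proof of this theorem: it is stated with the citation \cite{ShenSpray} and used as a known result, so there is no in-paper proof to compare against. Your derivation --- the Funk equation $\mathsf{F}_{x^k}=\mathsf{F}\,\mathsf{F}_{y^k}$, the translation argument $B_xM=\Omega-x$ for (iii), the identification $G^i=\tfrac12\mathsf{F}\,y^i$ with $P=\mathsf{F}/2$, the projectively-flat curvature formula yielding $\Xi=-\mathsf{F}^2/4$, and the Euler-homogeneity computation $\partial G^i/\partial y^i=\tfrac{n+1}{2}\mathsf{F}$ --- is precisely the route taken in Shen's book, so your approach coincides with the referenced source.
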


The explicit form of the co-metric for a Funk metric was given by Huang--Mo \cite{HuangMo} and Shen \cite{Sh0}.
\begin{lemma}[\cite{HuangMo,Sh0}]\label{Funkdualmetric}
Let $(\Omega,\mathsf{F})$  be a Funk metric space and $\phi$ the Minkowskian norm satisfying  \eqref{phiF}. The dual metric $\mathsf{F}^{*}$ is given by
\begin{equation}\label{co-metricrelation}
\mathsf{F}^{*}(x,\eta) = \phi^{*}(\eta) - \langle \eta, x   \rangle,
\end{equation}
where $\ds\phi^{*}(\eta):=\underset{y\in T_x\Omega\backslash \{0\}}{\sup}\frac{\eta(y)}{\phi(y)}$ and $ \langle \eta, x   \rangle=\eta(x)$ is the canonical pairing between $T_x^*\Omega$ and $T_x\Omega$.
\end{lemma}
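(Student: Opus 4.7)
The plan is to prove the identity by a direct change of variables in the defining supremum for $\mathsf{F}^*(x,\eta)$, exploiting the characterization \eqref{phiF} of Funk metrics provided by Theorem \ref{funkmetrisufficnecessy}.

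First, I would set up the substitution $z := y + \mathsf{F}(x,y)\,x$ for $y \in T_x\Omega\setminus\{0\}$. By \eqref{phiF} this gives $\phi(z) = \mathsf{F}(x,y)$, with inverse $y = z - \phi(z)\,x$. I would then verify that this assignment is a bijection between $T_x\Omega\setminus\{0\}$ and $\mathbb{R}^n\setminus\{0\}$. The only possible degeneracy is $z = \phi(z)\,x$ with $z\neq0$, which would force $z = \lambda x$ for $\lambda=\phi(z)>0$ and hence $\lambda = \phi(\lambda x) = \lambda\phi(x)$, i.e.\ $\phi(x)=1$; but $x\in\Omega$ means $\phi(x)<1$ by \eqref{omegedom}, a contradiction. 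Conversely, the identity $\mathsf{F}(x,z-\phi(z)x)=\phi(z)$ follows from the defining property $t=\phi(y+tx)$: when $y=z-\phi(z)x$, the choice $t=\phi(z)$ plugs in consistently, and uniqueness of the positive solution yields the claim.

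Second, under this substitution the integrand of the supremum simplifies by linearity of $\eta$:
\[
\frac{\langle\eta,y\rangle}{\mathsf{F}(x,y)} = \frac{\langle\eta,z\rangle - \phi(z)\langle\eta,x\rangle}{\phi(z)} = \frac{\langle\eta,z\rangle}{\phi(z)} - \langle\eta,x\rangle.
\]
Since $\langle\eta,x\rangle$ is independent of $y$ (and of $z$), taking the supremum and invoking the definition of $\phi^{*}$ gives
\[
\mathsf{F}^{*}(x,\eta)=\sup_{y\in T_x\Omega\setminus\{0\}}\frac{\langle\eta,y\rangle}{\mathsf{F}(x,y)}=\sup_{z\in\mathbb{R}^n\setminus\{0\}}\frac{\langle\eta,z\rangle}{\phi(z)}-\langle\eta,x\rangle=\phi^{*}(\eta)-\langle\eta,x\rangle,
\]
which is the claimed formula.

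The only nontrivial point in this route is the bijectivity of the change of variables, and it reduces to the strict inclusion $\phi(x)<1$ guaranteed by $x\in\Omega$. Every other step is an algebraic manipulation combined with \eqref{phiF}. Hence I anticipate no serious obstacle: the proof is essentially a one-line substitution argument once the bijection has been checked.
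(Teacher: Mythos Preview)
Your argument is correct. The substitution $z=y+\mathsf{F}(x,y)\,x$ together with $\phi(z)=\mathsf{F}(x,y)$ from \eqref{phiF} indeed turns the supremum defining $\mathsf{F}^*$ into the one defining $\phi^*$ minus the constant $\langle\eta,x\rangle$, and your bijectivity check via $\phi(x)<1$ is the right way to rule out degeneracies. One small point you might make explicit for completeness: the uniqueness of the positive solution of $t=\phi(y+tx)$ follows because the map $t\mapsto t-\phi(y+tx)$ is strictly increasing (its derivative is $1-\langle {\dd}\phi|_{y+tx},x\rangle\geq 1-\phi(x)>0$ by \eqref{weakswaza}), but this is a routine observation.

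As for comparison with the paper: the paper does not supply its own proof of this lemma. It is stated with attribution to Huang--Mo \cite{HuangMo} and Shen \cite{Sh0} and used as a black box. Your direct derivation is therefore a self-contained addition rather than a variant of anything in the text.
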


Every  Funk metric space is projectively flat; this yields concrete formulas for distances and reversibility.
\begin{theorem}[\cite{KLZ}] \label{Funkdist}
Let $(\Omega,\mathsf{F})$  be a Funk metric space and $\phi$ the Minkowskian norm satisfying \eqref{phiF}. Then the following statements are true:
\begin{enumerate}[\rm (i)]

\item\label{distexprseeion2} the distance function  $d_{\mathsf{F}} (\mathbf{0},x)$ is given by
\begin{equation}\label{Funkd0xdx0}
d_{\mathsf{F}} (\mathbf{0},x) = -\ln [1-\phi(x)];
\end{equation}

\item\label{distexprseeion3} the Funk metric space $(\Omega,\mathsf{F})$ is forward complete (but not backward complete) with
\begin{equation*}
\lambda_{{\mathsf{F}}}(x) \in \left[\frac{1 +\phi(x)}{1-\phi(x)},\frac{\lambda_\phi +\phi(x)}{1-\phi(x)}\right],\qquad  \lambda_F(\Omega)=+\infty.
\end{equation*}
where $\lambda_\phi:=\sup_{y\neq 0}\phi(-y)/\phi(y)$ is the reversibility of $\phi$.
\end{enumerate}
\end{theorem}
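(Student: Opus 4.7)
My plan is to exploit the projective flatness of Funk metrics (built into Theorem \ref{funkmetrisufficnecessy}): every geodesic through $\mathbf{0}$ traces a straight Euclidean segment, which reduces distance computations to one-variable integrals over $\phi$. Because Funk metrics do not correspond to a symmetric notion of distance, forward and reverse directions must be treated separately, and this asymmetry is exactly the mechanism that produces the two qualitatively different conclusions in (ii) (forward completeness versus backward incompleteness, and the finite/infinite contrast of the reversibility).

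For (i), the geodesic from $\mathbf{0}$ to $x\in\Omega$ is the line segment $\gamma(t)=tx$, $t\in[0,1]$. Combining \eqref{funkdef} with \eqref{phiF} along this ray gives $\phi\bigl(tx+x/\mathsf{F}(tx,x)\bigr)=1$, and positive $1$-homogeneity of $\phi$ (noting $t+1/\mathsf{F}(tx,x)>0$) yields $\mathsf{F}(tx,x)=\phi(x)/(1-t\phi(x))$. Hence
\[
d_{\mathsf{F}}(\mathbf{0},x)=\int_0^1 \frac{\phi(x)}{1-t\phi(x)}\,dt=-\ln\bigl(1-\phi(x)\bigr),
\]
which is \eqref{Funkd0xdx0}. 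The same computation along the reverse segment $\tilde\gamma(t)=(1-t)x$ with tangent $-x$ gives $\mathsf{F}(\tilde\gamma(t),-x)=\phi(-x)/(1+(1-t)\phi(-x))$ and therefore $d_{\mathsf{F}}(x,\mathbf{0})=\ln(1+\phi(-x))\le\ln(1+\lambda_\phi)$; this auxiliary identity powers the backward non-completeness in (ii).

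For (ii), forward completeness follows because $d_{\mathsf{F}}(\mathbf{0},x)\to+\infty$ as $\phi(x)\to 1^-$, so every forward metric ball stays at positive Euclidean distance from $\partial\Omega$ and has compact closure; then the standard Hopf--Rinow criterion for Finsler manifolds yields forward completeness. Backward non-completeness is immediate: the bounded function $d_{\mathsf{F}}(x,\mathbf{0})=\ln(1+\phi(-x))$ shows $\Omega\subset B^-_{\ln(1+\lambda_\phi)}(\mathbf{0})$, yet $\Omega$ is non-compact. For the reversibility bounds, set $\mathsf{F}_\pm:=\mathsf{F}(x,\pm y)$, so by \eqref{phiF} we have $\mathsf{F}_\pm=\phi(\pm y+\mathsf{F}_\pm x)$. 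The key observation is the linear identity
\[
-y+\mathsf{F}_- x=-(y+\mathsf{F}_+ x)+(\mathsf{F}_++\mathsf{F}_-)x,
\]
which, combined with the triangle inequality for $\phi$ and the estimate $\phi(-u)\le\lambda_\phi\phi(u)$, gives $\mathsf{F}_-\le\lambda_\phi\mathsf{F}_++(\mathsf{F}_++\mathsf{F}_-)\phi(x)$, and hence
\[
\frac{\mathsf{F}(x,-y)}{\mathsf{F}(x,y)}\le\frac{\lambda_\phi+\phi(x)}{1-\phi(x)},
\]
which is the stated upper bound. For the lower bound $\frac{1+\phi(x)}{1-\phi(x)}$, one tests $y$ in the direction of a supporting hyperplane of $\Omega$ at the boundary point $x/\phi(x)$, so that $y+\mathsf{F}_+ x$ is almost collinear with $x$; a short calculation then shows that $\mathsf{F}_-/\mathsf{F}_+$ approaches $(1+\phi(x))/(1-\phi(x))$. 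Letting $\phi(x)\to 1^-$ in either bound yields $\lambda_{\mathsf{F}}(\Omega)=+\infty$.

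The main obstacle will be the sharp lower bound for $\lambda_{\mathsf{F}}(x)$: the identity-plus-triangle-inequality approach that handles the upper bound produces only inequalities in one direction, so the lower bound requires constructing an explicit extremizing sequence of tangent vectors. The strict convexity of $\Omega$ (guaranteed by the smoothness of $\mathsf{F}$ via Theorem \ref{funkmetrisufficnecessy}) is the essential geometric input that allows such extremal directions to be exhibited; packaging this argument cleanly, without appealing to asymmetric Finsler comparison theory, is the delicate part of the proof.
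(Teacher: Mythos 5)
The paper does not actually prove Theorem \ref{Funkdist} --- it is quoted from \cite{KLZ} --- so your proposal can only be measured against the analogous computations the paper carries out for Berwald's metric (Theorem \ref{Berwalddist}). Measured that way, part (i), the completeness statements, and the \emph{upper} reversibility bound are correct and in exactly that spirit: $\mathsf{F}(tx,x)=\phi(x)/(1-t\phi(x))$ and its integral, the Hopf--Rinow argument for forward completeness, the non-compact backward ball, and the identity $-y+\mathsf{F}_-x=-(y+\mathsf{F}_+x)+(\mathsf{F}_++\mathsf{F}_-)x$ combined with subadditivity of $\phi$ all check out. One small omission: the distance formula and the inclusion $B^+_R(\mathbf{0})\subset\{\phi<1-e^{-R}\}$ require the radial segment to be \emph{minimizing}, not merely a geodesic; this is easy to supply (e.g.\ $\rho:=-\ln(1-\phi)$ satisfies $\mathsf{F}^*(z,{\dd}\rho)=1$ by Lemma \ref{Funkdualmetric} and \eqref{phihom}, so every curve from $\mathbf{0}$ to $x$ has length at least $\rho(x)$), but it should be said.

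The genuine gap is the lower bound $\lambda_{\mathsf{F}}(x)\ge\frac{1+\phi(x)}{1-\phi(x)}$, and with it your derivation of $\lambda_{\mathsf{F}}(\Omega)=+\infty$. Your sketch --- take $y$ parallel to a supporting hyperplane at $x/\phi(x)$ ``so that $y+\mathsf{F}_+x$ is almost collinear with $x$'' --- is internally inconsistent: $y+\mathsf{F}_+x$ is collinear with $x$ precisely when $y$ is \emph{radial}, and the radial test vector gives $\mathsf{F}(x,x)/\mathsf{F}(x,-x)=\frac{\phi(x)(1+\phi(-x))}{\phi(-x)(1-\phi(x))}$, which dominates $\frac{1+\phi(x)}{1-\phi(x)}$ only when $\phi(-x)\le\phi(x)$. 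Worse, no choice of $y$ can produce the stated constant in general, because $\lambda_{\mathsf{F}}(x)=\sup_{u}\mathsf{F}(x,-u)/\mathsf{F}(x,u)$ depends only on $\Omega$ (it is the supremum, over chords through $x$, of the ratio of the distance to $\partial\Omega$ in a direction to that in the opposite direction), whereas $\phi$ depends on the choice of origin: for $\Omega=\mathbb{B}^n_1(c)$ with $c=(0.9,0,\dots,0)$ and $x=(0.95,0,\dots,0)$ one finds $\lambda_{\mathsf{F}}(x)=\frac{1+|x-c|}{1-|x-c|}=21/19$, while $\phi(x)=1/2$ makes the alleged lower bound equal to $3$. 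So the ``short calculation'' you defer does not exist as described; what your method genuinely yields from the radial direction is $\lambda_{\mathsf{F}}(x)\ge\frac{\lambda_\phi^{-1}+\phi(x)}{1-\phi(x)}$, and obtaining the printed constant would need an additional symmetry hypothesis (it holds, e.g., when $\phi(-x)\le\phi(x)$, in particular for reversible $\phi$). Relatedly, $\lambda_{\mathsf{F}}(\Omega)=+\infty$ cannot follow ``from either bound'': an upper bound never forces divergence. It does follow from the radial computation, since $\frac{\phi(x)(1+\phi(-x))}{\phi(-x)(1-\phi(x))}\ge\frac{\lambda_\phi^{-1}+\phi(x)}{1-\phi(x)}\to+\infty$ as $\phi(x)\to1^-$, and that weaker lower bound is all that the rest of the paper uses.
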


\subsection{Analytic properties of Funk metric spaces}\label{SobolevFunk}

\begin{lemma}\label{FunkIntegral}
Let $(\Omega,\mathsf{F})$ be  an $n$-dimensional Funk metric space   and  $\phi$ the Minkowskian norm satisfying  \eqref{phiF}.
If $f$ is a measurable function on $\mathbb{R}$ such that $f\circ\phi\in L^1(\Omega, \mathscr{L}^n)$, then
\begin{equation}\label{FunkIntegralEq}
\int_\Omega f\circ\phi \dm_{BH} = n\omega_n \int_0^1 f(t) t^{n-1} {\dd} t  = n\omega_n \int_0^{+\infty} f(1 - e^{-r})    e^{-r} (1-e^{-r})^{n-1} {\dd} r,
\end{equation}
where the variable $r$ denotes the distance from the origin given by \eqref{Funkd0xdx0}.
\end{lemma}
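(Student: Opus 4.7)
The plan is to reduce the Funk-metric integral to a one-dimensional integral in two steps, exploiting the homogeneity of $\phi$ and then the explicit distance formula \eqref{Funkd0xdx0}.

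First I would rewrite $\dm_{BH}$ in terms of Lebesgue measure. By Theorem \ref{bascifunthm}/\eqref{ShenSpray} we have $\dm_{BH} = \frac{\omega_n}{\vol(\Omega)}\,{\dd}x$, so it suffices to compute $\int_\Omega (f\circ\phi)\,{\dd}x$. Because $\phi$ is positively $1$-homogeneous and $\Omega=\{\phi<1\}$ by \eqref{omegedom}, the sublevel set $\{x\in\Omega:\phi(x)<t\}$ equals $t\,\Omega$ for every $t\in(0,1]$, and hence has Lebesgue measure $t^n\vol(\Omega)$. Applying the layer-cake identity (equivalently, the distributional formula $\int_\Omega g\circ\phi\,{\dd}x=\int_0^\infty g(t)\,{\dd}\mathscr{L}^n(\{\phi\le t\})$ after a standard approximation for measurable $f$) gives
\[
\int_\Omega f(\phi(x))\,{\dd}x \;=\; \int_0^1 f(t)\,\frac{\dd}{{\dd}t}\bigl[t^n\vol(\Omega)\bigr]\,{\dd}t \;=\; n\,\vol(\Omega)\int_0^1 f(t)\,t^{n-1}\,{\dd}t,
\]
and multiplying by $\omega_n/\vol(\Omega)$ yields the first equality of \eqref{FunkIntegralEq}.

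For the second equality, I would invoke Theorem \ref{Funkdist}/\eqref{distexprseeion2}, which gives $r=-\ln(1-\phi(x))$, i.e., the substitution $t=1-e^{-r}$ identifies the radial variable with the Funk distance from the origin. Under this $C^1$-diffeomorphism of $(0,1)$ onto $(0,+\infty)$ we have ${\dd}t=e^{-r}\,{\dd}r$, so
\[
n\omega_n\int_0^1 f(t)\,t^{n-1}\,{\dd}t \;=\; n\omega_n\int_0^{+\infty} f(1-e^{-r})\,(1-e^{-r})^{n-1}\,e^{-r}\,{\dd}r,
\]
which is the second equality.

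There is no serious obstacle: the argument hinges only on the homogeneity of $\phi$ (ensuring the cleanly radial structure $\{\phi<t\}=t\Omega$), the normalization of $\mathfrak{m}_{BH}$ recorded in \eqref{Vol_Funk}, and the explicit relation $\phi=1-e^{-r}$ read off from \eqref{Funkd0xdx0}. If one wished to avoid the layer-cake step, an alternative is to introduce ``Minkowskian polar coordinates'' $x=t v$ with $v\in\phi^{-1}(1)$, pushing the Lebesgue measure to $t^{n-1}\,{\dd}t\wedge{\dd}\sigma_\phi(v)$ where $\sigma_\phi$ is the cone measure on $\partial\Omega$ with total mass $n\vol(\Omega)$; but the layer-cake route is shorter and sidesteps any discussion of $\sigma_\phi$.
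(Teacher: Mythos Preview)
Your argument is correct. The layer-cake/pushforward step is sound: since $\phi$ is positively $1$-homogeneous, the level-set map $\phi$ pushes $\mathscr{L}^n|_{\Omega}$ forward to the measure $n\vol(\Omega)\,t^{n-1}{\dd}t$ on $(0,1)$, and that is all one needs for the first equality; the second is just the change of variable you describe.

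The paper takes a different, noticeably heavier route---essentially the ``Minkowskian polar coordinates'' alternative you mention at the end. It introduces the Riemannian metric $\hat g_{ij}=\tfrac12\partial_{y^i}\partial_{y^j}\phi^2$ on $\mathbb{R}^n\setminus\{\mathbf{0}\}$, writes ${\dd}\vol_{\hat g}=t^{n-1}{\dd}t\,{\dd}\nu_{\mathbf{0}}$ in the polar coordinates $(t,v)=(\phi(x),x/\phi(x))$, and then applies Stokes' theorem on $\{\phi<1\}$ to identify $\int_{S_{\mathbf 0}}\Psi\,{\dd}\nu_{\mathbf 0}=n\omega_n$ for the density ratio $\Psi=\sigma/\sqrt{\det\hat g}$. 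This yields the same result but at the cost of constructing $\hat g$ and invoking Stokes. Your approach bypasses the auxiliary Riemannian structure entirely by working with the pushforward of Lebesgue measure directly; what the paper's approach buys in return is an explicit identification of the cone/surface measure on $\partial\Omega$, which you correctly observe is not needed for the statement at hand.
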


\begin{proof}
The Minkowski norm $\phi$ induces a Riemannian metric on $\mathbb{R}^n\backslash\{\mathbf{0}\}$ (cf.~\cite[Sections 14.2]{BCS}), i.e.,
\[
\hat{g}:= \hat{g}_{ij}(x) {\dd}x^i\otimes {\dd}x^j:=\frac12 \frac{\partial^2\phi^2}{\partial x^i \partial x^j}(x) {\dd}x^i\otimes {\dd}x^j.
\]
It should be remarked that
$\hat{g}_{ij}(x)=\hat{g}_{ij}(\lambda x)$ for any $\lambda>0$.

Let $(t,v)$ denote the polar coordinate system  around $\mathbf{0}$ induced by $\hat{g}$, i.e.,
\begin{equation}\label{polarcorphi}
t:=\phi(x),\qquad v:=\frac{x}{\phi(x)}\in S_{\mathbf{0}}:=\{y\in \mathbb{R}^n\,:\, \phi(y)=1\}.
\end{equation}
According to  \cite[Sections 14.9]{BCS}, the canonical Riemannian metric satisfies
\begin{equation}\label{volueversionp}
 {\dd}{\vol_{\hat{g}}}(x)=\sqrt{\det \hat{g}_{ij}(x)}\, {\dd}x= t^{n-1}{\dd}t \,{\dd}\nu_{\mathbf{0}}(v),
\end{equation}
where  ${\dd}\nu_{\mathbf{0}}$ is the canonical Riemannian measure of  $S_{\mathbf{0}}$ (see \cite[(1.4.8)]{BCS}), i.e.,
\begin{equation}\label{spherevolue}
{\dd}\nu_{\mathbf{0}}|_x=\sqrt{\det \hat{g}_{ij}(x)}\, \sum_{j=1}^n (-1)^{j-1} x^i {\dd}x^1\wedge \cdots \wedge {\dd}x^{j-1}\wedge {\dd}x^{j+1}\wedge \cdots  {\dd}x^n.
\end{equation}


Set $\sigma:=\frac{\omega_n}{\vol(\Omega)}$ as in \eqref{Vol_Funk}. It is straightforward to check that
\[
\Psi(x):=\frac{\sigma }{\sqrt{\det \hat{g}_{ij}(x)}}
\]
is a scalar function on $\mathbb{R}^n\backslash\{\mathbf{0}\}$ with $\Psi(x)=\Psi(x/\phi(x))$; thus it may be regarded as a function on $S_{\mathbf{0}}$.
By  Stokes' theorem and \eqref{spherevolue}, we have
\begin{align}\label{distourinteger}
\int_{S_\mathbf{0}}\Psi(v){\dd}\nu_{\mathbf{0}}(v)
& = \sigma \int_{S_\mathbf{0}}\sum_{j=1}^n (-1)^{j-1} x^i {\dd}x^1\wedge \cdots \wedge {\dd}x^{j-1}\wedge {\dd}x^{j+1}\wedge \cdots  {\dd}x^n\notag\\
& = n\sigma \int_{\{\phi<1\}}{\dd}x=n \sigma \vol(\Omega) =n \omega_n.
\end{align}

The assumption combined with Theorem \ref{bascifunthm}/\eqref{ShenSpray} implies   $f\circ\phi\in L^1(\Omega, \m_{BH})$.
Let  $(t,v)$ be the polar coordinate system  as in \eqref{polarcorphi}. A direct calculation combined with \eqref{polarcorphi}, \eqref{volueversionp}, and \eqref{distourinteger} yields
\begin{align*}
\int_\Omega f\circ\phi  \dm_{BH}&=\int_{\Omega\backslash\{\mathbf{0}\}} f(\phi(x))\sigma {\dd}x= \int_{\Omega\backslash\{\mathbf{0}\}} f(\phi(x))\frac{\sigma}{\sqrt{\det \hat{g}(\frac{x}{\phi(x)})}}{\sqrt{\det \hat{g}(x)}} {\dd}x\\
&=\int_{\Omega\backslash\{\mathbf{0}\}} f(\phi(x))\Psi\left(\frac{x}{\phi(x)}\right)\dvol_{\hat{g}}(x)=\int_{S_\mathbf{0}} \left(\int^1_0 f(t)\Psi(v)t^{n-1}{\dd}t\right) {\dd}\nu_{\mathbf{0}}(v)\\
&=n\omega_n\int^1_0 f(t) t^{n-1}{\dd}t.
\end{align*}

Finally,   \eqref{Funkd0xdx0} implies $r = -\ln(1-t)$ with $r = d_{\mathsf{F}}(\mathbf{0}, x)$.
 Hence, we have $t = 1 - e^{-r}$, and
\[ \int_\Omega f\circ\phi  \dm_{BH} =  n\omega_n\int^1_0 f(t) t^{n-1}{\dd}t = n\omega_n \int^{+\infty}_0 f(1 - e^{-r}) e^{-r} (1 - e^{-r})^{n-1}  {\dd}r,
\]
which completes the proof.
\end{proof}

\begin{lemma}\label{FunkW1p}
Let $(\Omega,\mathsf{F})$ be  an $n$-dimensional Funk metric space  and  $\phi $ the Minkowski norm satisfying  \eqref{phiF}.
For $\iota\in (0,1)$, define
\begin{equation*}
 u_\iota(x): = -[{1-\phi(x)}]^\iota.
\end{equation*}
Then $u_\iota\in W_0^{1,p}(\Omega,\mathsf{F},\m_{BH})$ and
\begin{align}
\int_\Omega |u_{\iota}|^p \dm_{BH} &= \frac{n!\, \omega_n}{(\iota p+1)(\iota p+2)\cdot\cdot\cdot(\iota p+n)}, \label{Funkukeq}
\\
\int_\Omega \mathsf{F}^{*p}( {\dd}u_\iota) \dm_{BH} &=  \frac{\iota^p n!\, \omega_n}{(\iota p+1)(\iota p+2)\cdot\cdot\cdot(\iota p+n)}. \label{FunkFdueq}
\end{align}
\end{lemma}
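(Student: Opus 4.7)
The plan is to first obtain an explicit formula for $\mathsf{F}^{*}(\dd u_{\iota})$, then evaluate the two integrals via Lemma~\ref{FunkIntegral}, and finally handle the membership in $W^{1,p}_0$ through a truncation plus smoothing argument.

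First I would compute $\mathsf{F}^{*}(\dd u_{\iota})$ pointwise. Since $u_{\iota} = -(1-\phi)^{\iota}$ on $\Omega$, the chain rule gives $\dd u_{\iota} = \iota (1-\phi)^{\iota-1}\dd\phi$. Because $\phi$ is positively $1$-homogeneous, Euler's identity yields $\langle \dd\phi|_x,\,x\rangle = \phi(x)$, while inequality~\eqref{weakswaza} applied to $\phi$ (together with its equality case at $y=x$) shows $\phi^{*}(\dd\phi|_x) = 1$. Substituting these into Lemma~\ref{Funkdualmetric} gives
\[
\mathsf{F}^{*}(x,\dd\phi) = \phi^{*}(\dd\phi) - \langle \dd\phi, x\rangle = 1-\phi(x),
\]
and therefore $\mathsf{F}^{*}(x,\dd u_{\iota}) = \iota(1-\phi(x))^{\iota}$. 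In particular, $\mathsf{F}^{*p}(\dd u_{\iota}) = \iota^{p}|u_{\iota}|^{p}$, which already accounts for the factor $\iota^{p}$ relating \eqref{Funkukeq} and \eqref{FunkFdueq}.

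Next I would apply Lemma~\ref{FunkIntegral} with $f(t) = (1-t)^{\iota p}$ to get
\[
\int_{\Omega}|u_{\iota}|^{p}\dm_{BH} = n\omega_{n}\int_{0}^{1}(1-t)^{\iota p}t^{n-1}\dd t = n\omega_{n}\,B(n,\iota p+1) = \frac{n\omega_{n}\,\Gamma(n)\Gamma(\iota p+1)}{\Gamma(n+\iota p+1)}.
\]
Expanding the Gamma function in the denominator as $\Gamma(n+\iota p+1) = (\iota p+n)(\iota p+n-1)\cdots(\iota p+1)\Gamma(\iota p+1)$ collapses the Beta value into the factorial form of \eqref{Funkukeq}. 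Multiplying by $\iota^{p}$ yields \eqref{FunkFdueq}. Both integrals are finite, so $\|u_{\iota}\|_{W^{1,p}_{\m_{BH}}} < +\infty$.

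The main obstacle is membership $u_{\iota}\in W^{1,p}_{0}(\Omega,\mathsf{F},\m_{BH})$, since $u_{\iota}$ is only continuous up to $\partial\Omega$ (not compactly supported in $\Omega$) and $(\Omega,\mathsf{F})$ has infinite reversibility (Theorem~\ref{Funkdist}/\eqref{distexprseeion3}), so approximation arguments must be carried out in the asymmetric $W^{1,p}_{\m_{BH}}$-norm in the correct direction. For $\delta\in(0,1)$, set
\[
u_{\iota,\delta}(x) := \min\{0,\,u_{\iota}(x)+\delta\}.
\]
Then $u_{\iota,\delta}\in\Lip_{0}(\Omega)$ with $\supp u_{\iota,\delta}\subset\{\phi\le 1-\delta^{1/\iota}\}$, which is a compact subset of $\Omega$. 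On the set $\{u_{\iota}< -\delta\}$ one has $u_{\iota,\delta} = u_{\iota}+\delta$ and $\dd u_{\iota,\delta} = \dd u_{\iota}$; on $\{u_{\iota}> -\delta\}$ we have $u_{\iota,\delta}\equiv 0$. Hence $\dd u_{\iota}-\dd u_{\iota,\delta}$ vanishes on $\{u_{\iota}<-\delta\}$ and equals $\dd u_{\iota}$ on $\{u_{\iota}>-\delta\}$, so dominated convergence (using $\mathsf{F}^{*p}(\dd u_{\iota})\in L^{1}(\Omega,\m_{BH})$ from the previous step) gives
\[
\lim_{\delta\to 0^{+}}\int_{\Omega}\mathsf{F}^{*p}(\dd u_{\iota}-\dd u_{\iota,\delta})\dm_{BH} = 0,
\]
and a similar argument for the $L^{p}$-part, hence $\|u_{\iota}-u_{\iota,\delta}\|_{W^{1,p}_{\m_{BH}}}\to 0$ as $\delta\to 0^{+}$. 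Finally, since each $u_{\iota,\delta}\in\Lip_{0}(\Omega)$ has compact support in $\Omega$ (where $\lambda_{\mathsf{F}}$ is bounded), Lemma~\ref{lipsconverppax} furnishes a sequence $(w_{\delta,k})\subset C_{0}^{\infty}(\Omega)$ with $\|u_{\iota,\delta}-w_{\delta,k}\|_{W^{1,p}_{\m_{BH}}}\to 0$ as $k\to\infty$. A diagonal extraction produces $(u_{k})\subset C_{0}^{\infty}(\Omega)$ with $\|u_{\iota}-u_{k}\|_{W^{1,p}_{\m_{BH}}}\to 0$, proving $u_{\iota}\in W^{1,p}_{0}(\Omega,\mathsf{F},\m_{BH})$.
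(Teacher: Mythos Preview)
Your proposal is correct and follows essentially the same approach as the paper: both compute $\mathsf{F}^{*}(\dd u_{\iota})=\iota(1-\phi)^{\iota}$ via Lemma~\ref{Funkdualmetric} together with $\phi^{*}(\dd\phi)=1$ and $\langle\dd\phi,x\rangle=\phi(x)$, then evaluate the integrals through Lemma~\ref{FunkIntegral} and the Beta function, and finally defer the $W^{1,p}_{0}$-membership to a truncation/smoothing scheme in the pattern of Lemma~\ref{BerwaldSobolevLemma1}. Your single-step truncation $u_{\iota,\delta}=\min\{0,u_{\iota}+\delta\}$ is in fact slightly leaner than the two-step version used in the proof of Lemma~\ref{BerwaldSobolevLemma1}, but the paper itself omits these details here and simply refers back to that lemma.
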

\begin{proof} For simplicity, write $\m := \m_{BH}$.
By Lemma \ref{FunkIntegral}, we have
\begin{equation*}
\int_\Omega |u_{\iota}|^p \dm = n\omega_n \int_0^1 t^{n-1}(1-t)^{\iota p} {\dd}t =n\omega_n\mathfrak{B}(n,\iota p+1) =\frac{n!\, \omega_n}{(\iota p+1)(\iota p+2)\cdot\cdot\cdot(\iota p+n)},
\end{equation*}
where $\mathfrak{B}(\cdot,\cdot)$ denotes the Beta function. This gives \eqref{Funkukeq}.

On the other hand, for any $x\in \mathbb{R}^n\backslash\{\mathbf{0}\}$, by \eqref{weakswaza} we have
\begin{equation}\label{phihom}
\phi^*({\dd}\phi|_x)=\sup_{y\neq 0}\frac{\langle  {\dd}\phi|_x, y\rangle}{\phi(y)}= 1, \qquad \langle {\dd}\phi|_x , x \rangle=x^i \frac{\partial \phi}{\partial x^i}(x)=\phi(x).
\end{equation}
Then by Lemma \ref{Funkdualmetric}, we obtain
\begin{equation*}
\mathsf{F}^*(x,{\dd}u_\iota)=\phi^*( {\dd}u_\iota|_x  )-\langle {\dd}u_\iota|_x, x   \rangle=\iota[{1-\phi(x)}]^\iota=\iota|u_\iota|(x),
\end{equation*}
which together with \eqref{Funkukeq} yields \eqref{FunkFdueq} directly. Therefore, $\|u_\iota\|_{W^{1,p}_{\m}}$ is finite.
The proof of  $u_\iota\in W_0^{1,p}(\Omega,\mathsf{F},\m)$
follows the same pattern as Lemma \ref{BerwaldSobolevLemma1} and is therefore omitted.
\end{proof}

\begin{theorem}\label{MainThmFunk}
Let $(\Omega,\mathsf{F},\m_{BH})$ be  an $n$-dimensional Funk metric space endowed with the Busemann-Hausdorff measure.
Then the following statements are true:
\begin{enumerate}[\rm (i)]
\item \label{funksob1} $W^{1,p}_0(\Omega,\mathsf{F},\m_{BH})$ is not a vector space for each  $p\in(1,+\infty)$;

\item \label{funksob2} the forward and backward topologies of $W_0^{1,p}(\Omega,\mathsf{F},\m_{BH})$ are incompatible.
 In particular, there exist  a sequence $(u_k)$ and $u$ in  $W^{1,p}_0(\Omega,\mathsf{F},\m_{BH})$ such that
 \[
 \lim_{k\rightarrow \infty}\|u-u_k\|_{W^{1,p}_{\m_{BH}}}=0, \qquad \|u_k-u\|_{W^{1,p}_{\m_{BH}}}= + \infty, \ \forall k\in \mathbb{N}.
 \]

 \end{enumerate}

\end{theorem}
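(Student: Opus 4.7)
The plan is to transport the Berwald-case arguments of Theorem \ref{bersolnol} and Corollary \ref{forwbacktopso} to the Funk setting, using the family $u_\iota(x)=-[1-\phi(x)]^\iota$ from Lemma \ref{FunkW1p}. For each $p\in(1,+\infty)$, I fix an exponent $\iota\in(0,1/p')$; this range is nonempty because $1/p'>0$. By Lemma \ref{FunkW1p} such a $u_\iota$ already belongs to $W^{1,p}_0(\Omega,\mathsf{F},\m_{BH})$, so both conclusions will follow once I prove the single asymmetry
\[
\int_\Omega \mathsf{F}^{*p}(-{\dd}u_\iota)\dm_{BH}=+\infty.
\]
This is the only genuine computation; the remainder is a triangle-inequality chase in the pseudo-norm $\|\cdot\|_{W^{1,p}_{\m_{BH}}}$.

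To derive this divergence, note that $-{\dd}u_\iota|_x=-\iota[1-\phi(x)]^{\iota-1}\,{\dd}\phi|_x$. Combining formula \eqref{co-metricrelation} with the positive $1$-homogeneity of $\phi^*$ and Euler's identity $\langle {\dd}\phi|_x,x\rangle=\phi(x)$ (from \eqref{phihom}), I obtain the explicit expression
\[
\mathsf{F}^*(x,-{\dd}u_\iota)=\iota[1-\phi(x)]^{\iota-1}\bigl[\phi^*(-{\dd}\phi|_x)+\phi(x)\bigr]\ \geq\ \iota[1-\phi(x)]^{\iota-1}\phi(x),
\]
where the inequality uses $\phi^*\ge 0$. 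Substituting this into the radial reduction supplied by Lemma \ref{FunkIntegral} gives
\[
\int_\Omega \mathsf{F}^{*p}(-{\dd}u_\iota)\dm_{BH}\ \geq\ n\omega_n\,\iota^p\int_0^1 (1-t)^{p(\iota-1)}t^{p+n-1}\,{\dd}t,
\]
which is a Beta-type integral that diverges at $t=1$ precisely when $p(\iota-1)\le-1$, i.e.\ exactly when $\iota\le1/p'$. The choice of $\iota$ therefore guarantees divergence. The triangle inequality from Remark \ref{sobonormprop} then forces $-u_\iota\notin W^{1,p}_0(\Omega,\mathsf{F},\m_{BH})$ --- otherwise $\|-u_\iota\|_{W^{1,p}_{\m_{BH}}}$ would be finite, contradicting what we have just proved. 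Since $u_\iota$ is in the space but its additive inverse is not, statement (i) follows.

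For (ii), pick a sequence $(u_k)\subset C^\infty_0(\Omega)$ furnished by Definition \ref{soboloevespace} so that $\|u_\iota-u_k\|_{W^{1,p}_{\m_{BH}}}\to 0$. If some $\|u_{k_0}-u_\iota\|_{W^{1,p}_{\m_{BH}}}$ were finite, then the triangle inequality would yield
\[
\|-u_\iota\|_{W^{1,p}_{\m_{BH}}}\ \leq\ \|-u_{k_0}\|_{W^{1,p}_{\m_{BH}}}+\|u_{k_0}-u_\iota\|_{W^{1,p}_{\m_{BH}}}\ <\ +\infty,
\]
again contradicting the divergence above; hence $\|u_k-u_\iota\|_{W^{1,p}_{\m_{BH}}}=+\infty$ for every $k$, which is the desired separation of the forward and backward topologies. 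The main obstacle is the first step --- writing down the clean co-metric expression for $\mathsf{F}^*(x,-{\dd}u_\iota)$ in a setting where no explicit closed form for $\phi^*(-{\dd}\phi|_x)$ is available; fortunately the positivity $\phi^*\ge 0$ gives a sufficient lower bound so that the extreme asymmetry contributed by the term $\phi(x)$ in \eqref{co-metricrelation} is isolated, and the remainder reduces to the Beta-integral computation above.
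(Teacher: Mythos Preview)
Your proof is correct and follows essentially the same route as the paper: choose $\iota\in(0,1/p')$, invoke Lemma~\ref{FunkW1p} for $u_\iota\in W^{1,p}_0$, use the co-metric formula \eqref{co-metricrelation} together with $\phi^*\geq 0$ and Euler's identity to bound $\mathsf{F}^*(x,-{\dd}u_\iota)\geq \iota[1-\phi(x)]^{\iota-1}\phi(x)$, and then apply Lemma~\ref{FunkIntegral} to reduce to the divergent Beta integral $\int_0^1 (1-t)^{p(\iota-1)}t^{p+n-1}\,{\dd}t$; part~(ii) is the same triangle-inequality argument as Corollary~\ref{forwbacktopso}. The paper's proof is identical in structure and in every essential computation.
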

\begin{proof}For simplicity, write $\m:=\m_{BH}$
 and let $\phi$   be  the Minkowski norm  satisfying \eqref{phiF}.
 Fix $p\in (1,+\infty)$ and choose $\iota\in (0,1-1/p)$. Set
 \[
 u_\iota(x): = -[{1-\phi(x)}]^\iota, \qquad \forall x\in \Omega.
 \]
 Lemma \ref{FunkW1p} implies $u_{\iota}\in W^{1,p}_0(\Omega,\mathsf{F},\m)$.
 Now we show $-u_\iota \not\in W^{1,p}_0(\Omega,\m)$.
Indeed, the second equation in \eqref{phihom} yields
\begin{equation*}
\langle{\dd}u_{\iota}|_x, x \rangle=  \iota[1-\phi(x)]^{\iota-1}\phi(x)\geq 0.
\end{equation*}
Using the expression for the co-metric \eqref{co-metricrelation} and the integral formula \eqref{FunkIntegralEq}, we obtain
\begin{align*}
\|-u_\iota \|^p_{W^{1,p}_{\m}}\geq &\int_\Omega \mathsf{F}^{*p}(x, -{\dd}u_\iota)\dm
=  \int_\Omega\left[\phi^{*}(-{\dd}u_\iota) + \langle {\dd}u_\iota, x  \rangle\right]^p\dm \geq \int_\Omega \langle {\dd}u_{\iota}, x  \rangle^p \dm\\
=&
 {n\omega_n\iota^p}   \int_0^1  t^{p+n-1} (1-t)^{p(\iota-1)}  {\dd}t = \mathfrak{B} \big(p+n, p(\iota-1)+1\big)=+\infty,
\end{align*}
where the last equality follows by $p(\iota-1)+1<0$.
Hence $-u_\iota \not\in W^{1,p}_0(\Omega,\mathsf{F},\m_{BH})$, proving (\ref{funksob1}).

Statement (\ref{funksob2}) follows from the same argument used to prove Corollary \ref{forwbacktopso}.
\end{proof}

The failure of the Hardy inequality on Funk metric spaces was proved by Krist\'aly--Li--Zhao \cite{KLZ}.
Using the test function $u_\iota(x): = -[{1-\phi(x)}]^\iota$, we shall show in
Theorem \ref{ThmFunkInequalities} that both the uncertainty principle and the generalized Caffarelli--Kohn--Nirenberg inequality fail on Funk metric spaces.
The proof relies on a basic estimate for a certain exponential integral, which we record first.


\begin{lemma} \label{Lemma_integral_Funk}
For every
$b > - n$,  there exists a constant  $C(b,n) > 0$ such that
\begin{equation}
 \int_0^{+\infty} r^b e^{-r} (1 - e^{-r})^{n-1} {\dd}r  \leq  C(b,n) < +\infty.
\end{equation}
\end{lemma}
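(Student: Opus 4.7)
The plan is to split the integral at $r=1$ and estimate each piece separately, exploiting the fact that $1-e^{-r}$ behaves like $r$ near zero and is bounded by $1$ on the remainder of the half-line.

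First, on the interval $[0,1]$, the elementary inequality $1-e^{-r}\leq r$ (which follows from the Taylor expansion or from $e^{-r}\geq 1-r$) yields $(1-e^{-r})^{n-1}\leq r^{n-1}$. Combining with $e^{-r}\leq 1$, the integrand is bounded by $r^{b+n-1}$. Since the hypothesis $b>-n$ is equivalent to $b+n-1>-1$, we obtain
\[
\int_0^1 r^b e^{-r}(1-e^{-r})^{n-1}\,{\dd}r \leq \int_0^1 r^{b+n-1}\,{\dd}r = \frac{1}{b+n} < +\infty.
\]

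Next, on the interval $[1,+\infty)$, we simply use $(1-e^{-r})^{n-1}\leq 1$, so that
\[
\int_1^{+\infty} r^b e^{-r}(1-e^{-r})^{n-1}\,{\dd}r \leq \int_1^{+\infty} r^b e^{-r}\,{\dd}r \leq \int_0^{+\infty} r^{|b|} e^{-r}\,{\dd}r = \Gamma(|b|+1) < +\infty,
\]
where we bounded $r^b$ by $r^{|b|}$ on $[1,+\infty)$ (using $r\geq 1$) to absorb the case of negative $b$ into a single convergent Gamma integral.

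Adding the two bounds yields the desired constant
\[
C(b,n):=\frac{1}{b+n}+\Gamma(|b|+1),
\]
finishing the proof. The only subtlety is the behavior at $r=0$ for negative $b$, which is handled precisely by the hypothesis $b>-n$; away from the origin, exponential decay dominates all polynomial growth and no further care is required.
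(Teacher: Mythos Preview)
Your proof is correct. The paper uses the same key inequality $1-e^{-r}\leq r$, but observes that it holds on the whole half-line $[0,+\infty)$, not just on $[0,1]$; this avoids the split entirely and gives in one step
\[
\int_0^{+\infty} r^b e^{-r}(1-e^{-r})^{n-1}\,{\dd}r \leq \int_0^{+\infty} r^{b+n-1} e^{-r}\,{\dd}r = \Gamma(b+n).
\]
Your two-piece bound is slightly less sharp and a bit longer, but the underlying idea is the same and nothing is missing.
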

\begin{proof}
Note that
$1 - e^{-r} \leq r$ for all $r\in[0, +\infty)$. This together with $b+n> 0$ yields
\begin{align*}
 \int_0^{+\infty} r^b e^{- r} (1 - e^{-r})^{n-1} {\dd}r
& \leq  \int_0^{+\infty}  e^{- r} r^{b+n -1} {\dd}r=\Gamma(b+n)=:C(b,n)<+\infty,
\end{align*}
which completes the proof.
\end{proof}



\begin{theorem}\label{ThmFunkInequalities}
Let $(\Omega,\mathsf{F},\m_{BH})$ be  an $n$-dimensional Funk metric space endowed with the Busemann--Hausdorff measure and set  $r(x):=d_{\mathsf{F}}(\mathbf{0},x)$.
Then the following statements are true:
\begin{enumerate}[{\rm (i)}]
		
\item\label{FunkHPW}
the generalized uncertainty principle fails, i.e., for any $-p+1<s\leq 1<p<n$ ,
		\[
		\inf_{f\in C^\infty_0{\Omega}\backslash\{0\}}\frac{\Big( \int_{{\Omega}} {\mathsf{F}}^{*p}({\rm d} f){\dd}{\m_{BH}} \Big)^{1/p}\left(  \int_{{\Omega}}|f|^p r^{p's} {\dd}{\m_{BH}} \right)^{1/{p'}} }{  \int_{{\Omega}} |f|^p r^{s-1} {\dd}{\m_{BH}}   }=0;
		\]

\item \label{FunkCKN}
the generalized Caffarelli--Kohn--Nirenberg inequality fails, i.e., if $1<p<m$ and $p(n+s-1)>m(n-p)>0$,
		then
		\[
		\inf_{f\in C^\infty_0(\Omega)\backslash\{0\}}\frac{\Big( \int_{{M}}F^{*p}({\rm d} f) {\dm_{BH}}   \Big)^{1/p}\left( \int_{\Omega}  {|f|^{p'(m-1)}}{r^{p's}}{\dm_{BH}}   \right)^{1/p'}  }{ \int_{\Omega}  {|f|^{m}}{r^{s-1}}{\dm_{BH}}  }=0.
		\]
\end{enumerate}
\end{theorem}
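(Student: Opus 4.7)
The plan is to exhibit a single one-parameter family of test functions along which both functionals collapse to zero. Following Lemma \ref{FunkW1p}, I would set $u_\iota(x) := -[1-\phi(x)]^\iota$ for $\iota \in (0,1]$, where $\phi$ is the Minkowski norm from \eqref{phiF}. Two structural facts drive the argument. First, Lemma \ref{FunkW1p} already gives $\int_\Omega \mathsf{F}^{*p}({\rm d}u_\iota)\,\dm_{BH} = O(\iota^p)$ as $\iota \to 0^+$. Second, since $r(x) = -\ln[1-\phi(x)]$, one has $|u_\iota|^q = e^{-\iota q r}$ for every $q>0$, so Lemma \ref{FunkIntegral} converts every weighted integral into an elementary one-dimensional integral,
$$\int_\Omega |u_\iota|^q\, r^a\,\dm_{BH} = n\omega_n \int_0^{+\infty} e^{-\iota q r}\, r^a\, e^{-r}(1-e^{-r})^{n-1}\, \dd r.$$
For every exponent $a>-n$, this integral admits a finite upper bound uniform in $\iota\in(0,1]$ (via $e^{-\iota q r}\le 1$ and Lemma \ref{Lemma_integral_Funk}) and a strictly positive lower bound uniform in $\iota\in(0,1]$ (via $e^{-\iota qr}\ge e^{-qr}$).

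For part (\ref{FunkHPW}) I would take $q=p$, $a=p's$ in the $r^{p's}$-factor and $a=s-1$ in the denominator. The hypothesis $-p+1<s\le 1<p<n$ yields $s-1+n>0$ (since $s>1-p>1-n$) and $p's+n>0$ (since $p's=ps/(p-1)>-p>-n$), so both weighted integrals are comparable to universal positive constants. Combined with the $O(\iota^p)$ bound on the gradient term, this gives $\mathscr{U}_{\mathbf{0},p,s}(u_\iota)=O(\iota)\to 0$. For part (\ref{FunkCKN}) the same recipe applies with $q=p'(m-1)$, $a=p's$ in the second numerator factor and $q=m$, $a=s-1$ in the denominator. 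The hypotheses $1<p<m$ and $p(n+s-1)>m(n-p)>0$ yield $s-1+n>m(n-p)/p>0$, and the algebraic identity
$$m(n-p)-p(n-1)+n(p-1)=(m-1)(n-p)>0$$
gives $p's+n>0$. The same uniform estimates then produce $\mathscr{C}_{\mathbf{0},p,m,s}(u_\iota)=O(\iota)\to 0$.

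The final step is to replace $u_\iota$, which is not compactly supported in $\Omega$, by admissible test functions. Because $u_\iota$ extends continuously by zero to $\partial\Omega$, the truncation $u_{\iota,\delta}:=\min\{0,u_\iota+\delta\}$ is Lipschitz with compact support $\{\phi\le 1-\delta^{1/\iota}\}\subset\Omega$ and therefore lies in $\Lip_0(\Omega)$. Dominated convergence, justified by the uniform integral bounds above, yields $\mathscr{U}_{\mathbf{0},p,s}(u_{\iota,\delta})\to \mathscr{U}_{\mathbf{0},p,s}(u_\iota)$ and $\mathscr{C}_{\mathbf{0},p,m,s}(u_{\iota,\delta})\to \mathscr{C}_{\mathbf{0},p,m,s}(u_\iota)$ as $\delta\to 0^+$. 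Lemma \ref{funfiln} then transfers the infima from $\Lip_0(\Omega)$ to $C_0^\infty(\Omega)$, completing both parts by sending $\iota\to 0^+$ after a diagonal extraction.

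The main obstacle I anticipate is the exponent bookkeeping in part (\ref{FunkCKN}): one must verify that the twin hypotheses $1<p<m$ and $p(n+s-1)>m(n-p)>0$ simultaneously ensure $s-1+n>0$ and $p's+n>0$, so that Lemma \ref{Lemma_integral_Funk} applies to each weighted integral. Beyond this algebraic check, the argument is simpler than its Berwald counterpart (Theorem \ref{MainThmBerw}) because the Funk identity $r=-\ln(1-\phi)$ turns $|u_\iota|^q$ into a pure exponential in $r$, so no case analysis in $s$ (as performed in the proof of Theorem \ref{sbeer}) is required here.
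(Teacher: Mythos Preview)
Your proposal is correct and follows essentially the same approach as the paper: the same test family $u_\iota=-[1-\phi]^\iota$, the same use of Lemmas \ref{FunkW1p}, \ref{FunkIntegral}, and \ref{Lemma_integral_Funk} to get uniform-in-$\iota$ bounds on the weighted integrals, and the same truncation-plus-Lemma \ref{funfiln} passage to $C_0^\infty$. Your algebraic verification that $p's+n>0$ in part (\ref{FunkCKN}) via the identity $m(n-p)-p(n-1)+n(p-1)=(m-1)(n-p)$ is a slightly more explicit variant of the paper's route (which simply notes $s>1-p\Rightarrow p's>-p>-n$), but the content is identical.
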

\begin{proof}Write $\m: =\m_{BH}$
 and let $\phi$ be the Minkowski norm satisfying \eqref{phiF}.
Set
$u_\iota(x): = -[{1-\phi(x)}]^\iota$ for  $\iota\in (0,1)$.
Lemma \ref{FunkW1p} shows that $u_{\iota}\in W^{1,p}_0(\Omega,\mathsf{F},\m)$ and
\begin{align} \label{FunkF*p}
\int_\Omega \mathsf{F}^{*p}( {\dd}u_\iota) \dm &=  \frac{\iota^p n!\, \omega_n}{(\iota p+1)(\iota p+2)\cdot\cdot\cdot(\iota p+n)}
<  \frac{\iota^p n! p! \, \omega_n}{(p+n)!}.
\end{align}

\textbf{(\ref{FunkHPW})}
By \eqref{Funkd0xdx0} and \eqref{FunkIntegralEq}, a direct computation gives
\begin{align} \label{FunkHPW_1}
\int_{\Omega} {|u_\iota|^p}{r^{s-1}}{\dd}{\m}
& = n\omega_n\int_0^{+\infty} e^{- (\iota p +1) r} r^{s-1} (1 - e^{-r})^{n-1} {\dd}r  \notag\\
& \geq n\omega_n \int_{1/2}^{1} e^{- (p +1) r} r^{s-1} (1 - e^{-r})^{n-1} {\dd}r
=:n\omega_n C_1(p,s,n) > 0.
\end{align}

On the other hand, since $-p+1<s\leq 1<p<n$, a direct calculation yields
$p's = \frac{p s }{p -1} > - p > - n$. Then Lemma \ref{Lemma_integral_Funk} yields a constant $C_2(p,s,n) > 0$ such that
\begin{align} \label{FunkHPW_2}
\int_{\Omega} {|u_\iota|^p}{r^{p's}}{\dd}{\m}
& = n\omega_n \int_0^{+\infty} r^{p's} e^{-(\iota p +1)r} (1 - e^{-r})^{n-1} {\dd}r \notag\\
& \leq  n\omega_n \int_0^{+\infty} r^{p's} e^{-r} (1 - e^{-r})^{n-1} {\dd}r\leq n\omega_n C_2(p,s,n).
\end{align}

Now it follows from \eqref{FunkF*p}--\eqref{FunkHPW_2} that $\lim_{\iota\rightarrow 0^+}\mathscr{U}_{\mathbf{0},p,s}(u_{\iota})=0$,
where $\mathscr{U}_{\mathbf{0},p,s}$ is the functional from  Definition \ref{function11}.
The same approximation argument as in the proof of Theorem \ref{MainThmBerw}/\eqref{Berw4} then yields
\begin{equation*}
\inf_{f\in C^\infty_0(\Omega)\backslash\{0\}}\mathscr{U}_{\mathbf{0},p,s} (f)\leq \liminf_{\iota \rightarrow 0^+} \mathscr{U}_{\mathbf{0},p,s} (u_{\iota})=0,
\end{equation*}
which establishes \eqref{FunkHPW}.

\smallskip
\textbf{(\ref{FunkCKN})}
The argument is parallel to the one above.
First, there exists a constant $C_3(s,m,n)>0$ with
\begin{equation} \label{FunkCKN_1}
\int_{{M}}  {|u_\iota|^{m}}{r^{s-1}}{\dm} \geq  n\omega_n C_3(s,m,n) > 0.
\end{equation}
Second, for the term with $r^{p's}$, by \eqref{FunkIntegralEq}, we have
\begin{align*}
\int_{\Omega} {|u_\iota|^{p'(m-1)}}{r^{p's}}{\dd}{\m}
& = n\omega_n \int_0^{+\infty} r^{p's} e^{-(\iota p'(m-1) +1)r} (1 - e^{-r})^{n-1} {\dd}r
 \leq  n\omega_n \int_0^{+\infty} r^{p's} e^{-r} (1 - e^{-r})^{n-1} {\dd}r.
\end{align*}
The assumption $1<p<m$ and $p(n+s-1)>m(n-p)>0$  implies $s>1-p$, which combined with
$n> p $ gives
$p's = \frac{p s }{p -1} > - p > - n.$
Hence, by Lemma \ref{Lemma_integral_Funk}, there exists a constant $C_4(p,s,n) > 0$ such that
\begin{equation*} 
\int_{\Omega} {|u_\iota|^{p'(m-1)}}{r^{p's}}{\dd}{\m} \leq   n\omega_n C_4(p,s,n).
\end{equation*}
This combined with \eqref{FunkF*p} and \eqref{FunkCKN_1} implies  $\lim_{\iota\rightarrow 0^+}\mathscr{C}_{\mathbf{0},p,m,s}(u_{\iota})=0$,
where $\mathscr{C}_{\mathbf{0},p,m,s}$ is the functional from  Definition \ref{function11}.
Thus,  (\ref{FunkCKN}) follows directly from a standard approximation argument.
\end{proof}

\begin{proof}[Proof of Theorem \ref{funkcknf}] In view of \eqref{granotwoff} and Theorem \ref{bascifunthm}/\eqref{ShenSpray}, the statement is an immediate consequence of Theorem \ref{ThmFunkInequalities}/(\ref{FunkCKN}).
\end{proof}

While both the Berwald and Funk metric spaces are Cartan--Hadamard manifolds with infinite reversibility and have many properties in common, their behavior regarding the generalized CKN inequality is strikingly different: the sharp threshold phenomenon present in the Berwald setting disappears entirely for Funk metrics.

\section{Role of $S$-Curvature}\label{RiccSboundver}

This section proves Theorem \ref{voluemcompar22}, which provides a general criterion for the failures of related functional inequalities on Finsler metric measure manifolds under a curvature domination condition.
The proof relies repeatedly on the integral decomposition \eqref{inffexprexx} and the volume comparison theorem \ref{bascivolurcompar}.

The following lemma is crucial for proving Theorem \ref{voluemcompar22}.
\begin{lemma}\label{lem_general}
Let $(M,F,\mathfrak{m})$ be an $n$-dimensional $\FMMM$ with $\mathbf{Ric} \geq -(n-1)k^2$ for some $k \geq 0$. Assume that there exists  a point $o \in M$ such that
\begin{equation}\label{generalSCONDITION}
\mathbf{S}(\nabla r) \geq (n-1)k + \frac{C}{1+r},
\end{equation}
where $r(x) = d_F(o,x)$ is the distance function from $o$, and the constant $C$ satisfies
\[
C>1  \ \text{ if } \ k>0; \qquad C \geq n  \ \text{ if } \ k=0.
\]
For $p\in (1, n)$,    let $a,b$ be two constants    satisfying
\[
a>0,\qquad \begin{cases}
-n < b< p' C, & \text{if } k > 0, \\
-n < b< p'( C-n+1), & \text{if } k = 0.
\end{cases}
\]
Define functions
$
v_\iota(x) := -e^{-\iota r^{1 + \frac{\mu}{p}}},
$ with constants $\iota, \mu$ satisfying
\[
0 < \iota < \min\left\{a^{-1}, p^{-1} \right\}, \qquad
\begin{cases}
C - 1 < \mu < C, & \text{if } k > 0, \\
C - n < \mu < C-n+1, & \text{if } k = 0.
\end{cases}
\]
Then there exist  constants $C_1:= C_1(k, n, p, C,\mu)$,  $C_2: = C_2(a,b,k,n,p,C,\mu)$ and $\xi:=\xi(b,k,p,C,\mu)$ with
\begin{align}
\int_{M} F^{*p}({\dd}v_{\iota}) \dm
&\leq
\begin{cases}
\mathscr{I}_{\m}(o)\iota^{ \frac{p(p+C-1)}{p+\mu}}C_1 ,& \text{ if }k>0,\\
 \mathscr{I}_{\m}(o)\iota^{ \frac{p(p+C-n)}{p+\mu}}C_1,   & \text{ if }k=0,
\end{cases}\label{lem_F*p_k>0=0}\\
 \int_{M} |v_\iota|^{a} r^{b} \dm    \leq \mathscr{I}_{\m}(o)\iota^{\xi}&\,C_2    \quad \text{with}\quad
 \xi> \begin{cases}
-p'\left(\frac{p+C-1}{p-\mu}\right) ,& \text{ if }k>0,\\[4pt]
 -p'\left(\frac{p+C-n}{p-\mu}\right),   & \text{ if }k=0,
\end{cases}\label{v_numer_two_estimate_k>0k=0}
\end{align}
where $\mathscr{I}_{\m}(o)$ is the integral of distortion \eqref{cocont}. In particular,
\begin{equation}\label{v_numer_estimate_both}
\lim_{\iota\rightarrow 0^+} \left( \int_{M} F^{*p}({\dd}v_{\iota}) \dm  \right)^{1/p} \left( \int_{M} |v_\iota|^{a} r^{b} \dm  \right)^{1/p'} = 0.
\end{equation}

\end{lemma}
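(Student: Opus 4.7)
\emph{Proof plan.} The approach is to convert the $S$-curvature hypothesis \eqref{generalSCONDITION} into an explicit pointwise upper bound for the density $\hat\sigma_o(r,y)$ via the volume comparison Theorem \ref{bascivolurcompar}, and then use the polar decomposition \eqref{inffexprexx} to reduce every integral in the lemma to a one-dimensional integral in $r$ that yields to an elementary change of variables. Integrating the definition \eqref{Scurvaturedef} of $\mathbf{S}$ along a radial geodesic gives $\tau(\gamma_y(r),\dot\gamma_y(r)) = \tau(o,y) + \int_0^r \mathbf{S}(\dot\gamma_y(s))\,\dd s \geq \tau(o,y) + (n-1)kr + C\ln(1+r)$. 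Combined with Theorem \ref{bascivolurcompar} and the elementary estimates $\mathfrak{s}_k(r) \leq e^{kr}/(2k)$ for $k>0$ and $\mathfrak{s}_0(r) = r$, this produces $\hat\sigma_o(r,y) \leq (2k)^{-(n-1)} e^{-\tau(o,y)}(1+r)^{-C}$ when $k>0$ and $\hat\sigma_o(r,y) \leq e^{-\tau(o,y)} r^{n-1}(1+r)^{-C}$ when $k=0$. Near $r=0$, the sharper Remark \ref{remaep} estimate $\hat\sigma_o(r,y) \leq 2 e^{-\tau(o,y)} r^{n-1}$ will be invoked in the $k>0$ case to handle values of $b$ with $-n < b \le -1$, which are not accommodated by the uniform bound alone.

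Since $v_\iota$ is radial and $F^*(\dd r) = 1$ almost everywhere, one computes $F^{*p}(\dd v_\iota) = \iota^p(1+\mu/p)^p r^\mu e^{-\iota p r^{1+\mu/p}}$. Substituting into \eqref{inffexprexx} and applying the pointwise density bound factors out the $y$-integration as $\mathscr{I}_\m(o)$ (cf.\ \eqref{cocont}), reducing the proof to estimating radial integrals of the shape $\int_0^\infty r^\mu (1+r)^{-C} \chi_k(r) e^{-\iota p r^{1+\mu/p}}\,\dd r$, with $\chi_k(r) = 1$ for $k>0$ and $\chi_k(r) = r^{n-1}$ for $k=0$. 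The identical reduction applies to $\int_M |v_\iota|^a r^b\,\dm$, with $r^\mu$ replaced by $r^b$ and $\iota p$ replaced by $a\iota$ in the exponent.

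To extract the correct power of $\iota$, split each radial integral at $r=1$. The $[0,1]$-piece is uniformly bounded in $\iota$ (using Remark \ref{remaep} to absorb the singularity when the effective exponent on $r$ drops below $-1$, which is precisely where the hypothesis $b>-n$ enters). On $[1,\infty)$ dominate $(1+r)^{-C}$ by $r^{-C}$ and substitute $u = \iota p r^{1+\mu/p}$ (resp.\ $u = a\iota r^{1+\mu/p}$). The parameter windows $C-1<\mu<C$ when $k>0$ and $C-n<\mu<C-n+1$ when $k=0$ are precisely what keep the resulting incomplete Gamma integral $\int_{c\iota}^\infty u^\alpha e^{-u}\,\dd u$ bounded as $\iota \to 0^+$, producing the exponents $p(p+C-1)/(p+\mu)$ and $p(p+C-n)/(p+\mu)$ advertised in \eqref{lem_F*p_k>0=0}. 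For $\int_M|v_\iota|^a r^b\,\dm$ the same change of variables yields two regimes: below the threshold $b = C-1$ (resp.\ $b = C-n$) the tail $r^{b-C}$ (resp.\ $r^{b+n-1-C}$) is already integrable and one obtains $\xi = 0$; above the threshold one obtains $\xi = -p(b-C+1)/(p+\mu)$ (resp.\ $-p(b+n-C)/(p+\mu)$). A direct algebraic check based on the identity $(C-1) + (p'/p)(p+C-1) = p'C$ translates the hypothesis $b < p'C$ (resp.\ $b < p'(C-n+1)$) into $\xi > -p'(p+C-1)/(p+\mu)$ (resp.\ $\xi > -p'(p+C-n)/(p+\mu)$), proving \eqref{v_numer_two_estimate_k>0k=0}. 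Since $(p+C-1)/(p+\mu) + \xi/p' > 0$ by this strict inequality, \eqref{v_numer_estimate_both} follows.

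The principal technical difficulty is the bookkeeping in the last paragraph: one has to reconcile the two regimes of $b$, together with the logarithmic borderline case $b = C-1$ (resp.\ $b = C-n$)---which must be absorbed by slightly adjusting $\mu$ to pick up an arbitrarily small additional power of $\iota$---into a single clean statement for $\xi$, and verify that the parameter window on $\mu$ simultaneously ensures integrability of the incomplete Gamma integrals at $0$ and the correct $\iota$-exponent in both factors. Once the density bound of the first step is in hand, everything else is essentially one-dimensional calculus.
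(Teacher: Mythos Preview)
Your proposal is correct and follows essentially the same route as the paper: integrate the $S$-curvature bound along radial geodesics, combine with Theorem~\ref{bascivolurcompar} to obtain the density estimate $\hat\sigma_o(r,y)\le e^{-\tau(o,y)-(n-1)kr}\mathfrak{s}_{-k^2}^{\,n-1}(r)(1+r)^{-C}$, pass to polar coordinates, split at $r=1$, and handle the tail by the substitution $z=\iota p\,r^{1+\mu/p}$ (resp.\ $z=a\iota\,r^{1+\mu/p}$). The only cosmetic difference is that the paper places the $b$-threshold at $C$ (resp.\ $C-n+1$) rather than your $C-1$ (resp.\ $C-n$), bounding the easy-case tail crudely by $\int_1^\infty e^{-a\iota r}\,\dd r\le(a\iota)^{-1}$ to obtain $\xi=-1$; this absorbs your logarithmic borderline cleanly without any need to ``adjust $\mu$'' (which is fixed in the hypothesis and cannot be moved).
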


\begin{proof} Since \eqref{v_numer_estimate_both} follows directly from \eqref{lem_F*p_k>0=0} and \eqref{v_numer_two_estimate_k>0k=0}, it suffices to establish the latter two estimates.

Let $(r,y)$ be the polar coordinate system around $o$.
By  \eqref{Scurvaturedef}, \eqref{geommeaingofr}, and \eqref{remscurvature}, we have
\[
\tau(\gamma_y(r), \dot{\gamma}_y(r)) - \tau(o, y) = \int_0^r \mathbf{S}(\gamma_y(t), \dot{\gamma}_y(t)) \, \mathrm{d}t \geq \int_0^r \left[ (n-1)k + \frac{C}{1+t} \right] \mathrm{d}t = (n-1)kr + \ln(1+r)^C.
\]
Together with Theorem \ref{bascivolurcompar} this yields the key comparison
\begin{equation}\label{measexst_general_1_lem}
\hat{\sigma}_o(r,y)\leq  e^{-\tau(\gamma_y(r),\dot{\gamma}_y(r))}\mathfrak{s}^{n-1}_{-k^2}(r)\leq  e^{-\tau(o,y)-(n-1)kr} \mathfrak{s}^{n-1}_{-k^2}(r)\,(r+1)^{-C}, \quad \forall  r\in (0,i_y), \ y\in S_oM.
\end{equation}
The remainder of the proof is divided into two cases: $k>0$ and $k=0$. 

\medskip

 \textbf{Case (i) $k > 0$.}
Since $\mathfrak{s}_{-k^2}(r) = \frac{\sinh(kr)}{k}$, we have
\begin{equation}\label{basckestk>0_general_1_lem}
e^{-(n-1)kr} \mathfrak{s}_{-k^2}^{n-1}(r) \leq (2k)^{1-n}.
\end{equation}
Moreover, there exists $\varepsilon_1=\varepsilon_1(k,n) \in (0,1)$ such that
\begin{equation}\label{basckestk>0_general_2_lem}
e^{-(n-1)kr} \mathfrak{s}_{-k^2}^{n-1}(r) \leq 2 r^{n-1}, \quad \forall r \in [0, \varepsilon_1].
\end{equation}

Since $F^*({\dd}r)=1$, a direct calculation together with   \eqref{inffexprexx}, \eqref{cocont}, \eqref{measexst_general_1_lem}, and \eqref{basckestk>0_general_1_lem} yields
\begin{align}
\int_{M} F^{*p}({\dd}v_{\iota}) \dm
& = \iota^p \left(1+\frac{\mu}{p}\right)^p  \int_{M} r^{\mu} e^{- \iota p r^{1+\frac{\mu}{p}}} \dm \notag\\
& \leq  \iota^p \left( 2 k  \right)^{1-n} \left(1+\frac{\mu}{p}\right)^p  \int_{S_oM}  \left( \int^{+ \infty}_0 e^{-\tau(o,y)} e^{- \iota p r^{1+\frac{\mu}{p}}} r^{\mu} (r+1)^{- C } {\dd}r  \right) {\dd}\nu_o(y) \notag \\
& \leq   \iota^p \left( 2 k  \right)^{1-n} \left(1+\frac{\mu}{p}\right)^p  \mathscr{I}_{\m}(o)  \left(   \int^{1}_0 r^{\mu} (r+1)^{- C}  {\dd}r
+ \int_1^{+\infty} e^{- \iota p r^{1+\frac{\mu}{p}}} r^{\mu} (r+1)^{- C}  {\dd}r \right) \notag \\
& \leq  \iota^p \left( 2 k  \right)^{1-n} \left(1+\frac{\mu}{p}\right)^p  \mathscr{I}_{\m}(o) \left( 1   +  \int_{1}^{+\infty}  e^{- \iota p r^{1+\frac{\mu}{p}}} r^{\mu - C}   {\dd}r \right). \label{v_numer_first0_general_1_lem}
\end{align}
The integral $\int_1^\infty e^{-\iota p r^{1 + \frac{\mu}{p}}} r^{\mu - C} \, \mathrm{d}r$ is estimated via the substitution $z = \iota p r^{1 + \frac{\mu}{p}}$.
Since $\frac{p(\mu -C ) - \mu}{p+\mu} > -1$,
\begin{align*}
\int_{1}^{+\infty} e^{- \iota p r^{1+\frac{\mu}{p}}}
r^{\mu-C}  {\dd}r
 &\leq \iota^{\frac{p(C-\mu-1)}{p+\mu}} \frac{p^{\frac{p(C-\mu)+\mu}{p+\mu}} }{p+\mu} \int_{0}^{+\infty} e^{- z}
z^{\frac{p(\mu-C) - \mu}{p+\mu}}  {\dd}z\\
&=\iota^{\frac{p(C-\mu-1)}{p+\mu}} \frac{p^{\frac{p(C-\mu)+\mu}{p+\mu}} }{p+\mu} \Gamma\left(\frac{p(\mu-C) - \mu}{p+\mu}+1\right)
 =:\iota^{\frac{p(C-\mu-1)}{p+\mu}}~ \overline{\mathcal {C}}_1(p,C,\mu).
\end{align*}
 Then it follows from \eqref{v_numer_first0_general_1_lem} and $\iota\in (0,1)$ that
\begin{align*}
 \int_{M} F^{*p}({\dd}v_{\iota}) \dm
&=  \iota^{ \frac{p(p+C-1)}{p+\mu}} \left( \iota^{\frac{p(\mu-C+1)}{p+\mu}} +  \overline{\mathcal {C}}_1(p,C,\mu) \right)   \left( 2 k  \right)^{1-n} \left(1+\frac{\mu}{p}\right)^p \mathscr{I}_{\m}(o)\notag\\
& \leq  \iota^{ \frac{p(p+C-1)}{p+\mu}} \left( 1 +  \overline{\mathcal {C}}_1(p,C,\mu) \right)   \left( 2 k  \right)^{1-n} \left(1+\frac{\mu}{p}\right)^p \mathscr{I}_{\m}(o)\\
&= :  \iota^{ \frac{p(p+C-1)}{p+\mu}} C_1(k, n, p,C,\mu) \mathscr{I}_{\m}(o) ,
\end{align*}
which proves   \eqref{lem_F*p_k>0=0}.

For  \eqref{v_numer_two_estimate_k>0k=0}, a direct calculation together with  \eqref{measexst_general_1_lem}--\eqref{basckestk>0_general_2_lem} and $b+n>0$ gives
\begin{align} \label{v_numer_second1_general_lem}
   \int_{M}  {|v_\iota|^{a}} {r^{b}} \dm
 & \leq \int_{S_oM}  \left( \int^{+\infty}_0 e^{-\tau(o,y)}e^{-\iota a r^{1+\frac{\mu}{p}} } r^{b}  e^{-(n-1)kr} \mathfrak{s}^{n-1}_{-k^2}(r) (r+1)^{- C} {\dd}r  \right) {\dd}\nu_o(y) \notag \\
&\leq  \mathscr{I}_{\m}(o) \left( 2\int^{\varepsilon_1}_0 r^{b+n-1}{\dd}r+   (2k)^{1-n} \int^1_{\varepsilon_1} r^b (r+1)^{-C}{\dd}r +  (2k)^{1-n} \int_{1}^{+\infty}   e^{-\iota a r^{1+\frac{\mu}{p}} } r^{b- C} {\dd}r  \right)\notag \\
 &\leq \mathscr{I}_{\m}(o) \left(   \frac{2}{b+n} +    \frac{\max\{1,\varepsilon_1^b \}}{(2k)^{n-1}} +  (2k)^{1-n} \int_{1}^{+\infty}   e^{-\iota a r^{1+\frac{\mu}{p}} } r^{b- C} {\dd}r  \right).
\end{align}
The remainder of the proof of  \eqref{v_numer_two_estimate_k>0k=0}  is divided into two subcases:  $ b \leq  C$ and $ C < b < p' C$.

\smallskip

\emph{Subcase 1.1: $b \leq  C$.}
In this subcase,
\begin{align*}
\int_{1}^{+\infty}   e^{-\iota a r^{1+\frac{\mu}{p}} } r^{b-C} {\dd}r
\leq \int_{1}^{+\infty}   e^{-\iota a r} {\dd}r
\leq   \iota^{-1} a^{-1}.
 \end{align*}
Since $\varepsilon_1=\varepsilon_1(k,n)$ depends only on $k$ and $n$,  it follows from \eqref{v_numer_second1_general_lem} and $\iota\in (0,1)$  that
\begin{align*}
 \int_{M}  |v_\iota|^{a} r^{b} \dm
  & \leq  \iota^{-1}\left(   \frac{2}{b+n} +    \frac{\max\{1,\varepsilon_1^b \}}{(2k)^{n-1}} +  \frac{1}{(2k)^{n-1}a}\right) \mathscr{I}_{\m}(o)= : \iota^{-1} C_2(a,b,k,n) \mathscr{I}_{\m}(o).
\end{align*}
In this subcase we take $\xi := -1$.  Since  $0<C-1<\mu<C$, we have
 \[ \frac{p+C-1}{p+\mu} + \frac{\xi}{p'} = \frac{p+C-1}{p+\mu} - \frac{p-1}{p} = \frac{p (C -\mu) +\mu}{p(p+\mu)} > 0,\]
 which proves  \eqref{v_numer_two_estimate_k>0k=0}.

\smallskip

\emph{Subcase 1.2: $C < b < p' C$.}
 Since
$\frac{p(b-C)-\mu}{p+\mu}  > -1$, by letting $z:= \iota a r^{1+\frac{\mu}{p}}$
we have
\begin{align}
 \int_{1}^{+\infty}   e^{-\iota a r^{1+\frac{\mu}{p}} } r^{b-C} {\dd}r
& \leq  \iota^{- \frac{p(b-C +1)}{p+\mu}}  a^{- \frac{p(b-C+1)}{p+\mu}} \left( \frac{p}{p+\mu} \right) \int^{+\infty}_0 e^{-z}  z^{ \frac{p(b-C)-\mu}{p+\mu}}   {\dd}z\notag\\
&= \iota^{- \frac{p(b-C+1)}{p+\mu}}  a^{- \frac{p(b-C+1)}{p+\mu}} \left( \frac{p}{p+\mu} \right) \Gamma\left(\frac{p(b-C)-\mu}{p+\mu} +1 \right)\notag\\
&=: \iota^{- \frac{p(b-C+1)}{p+\mu}} ~ \overline{\mathcal {C}}_2(a,b,p,C,\mu). \label{v_numer_second_deltainfty_general_1_lem}
\end{align}
Owing to $\iota\in (0,1)$, the inequalities \eqref{v_numer_second1_general_lem} and \eqref{v_numer_second_deltainfty_general_1_lem} then imply
\begin{align*}
 \int_{M}  |v_\iota|^{a} r^{b} \dm
  & \leq \iota^{- \frac{p(b-C+1)}{p+\mu}}  \left(    \frac{2}{b+n} +    \frac{\max\{1,\varepsilon_1^b \}}{(2k)^{n-1}}  +\frac{\overline{\mathcal {C}}_2(a,b,p,C,\mu) }{(2k)^{n-1}}  \right) \mathscr{I}_{\m}(o) \\
  &=: \iota^{- \frac{p(b-C+1)}{p+\mu}} C_2(a,b,k,n,p,C,\mu) \mathscr{I}_{\m}(o).
\end{align*}
We choose $\xi := - \frac{p(b-C+1)}{p+\mu}$ for this subcase. Then
\begin{equation*}
\frac{p+C-1}{p+\mu} + \frac{\xi}{p'} =\frac{p+C-1}{p+\mu} - \frac{p(b-C+1)}{p'(p+\mu)} = \frac{(p-1)(p'C-b)}{ (p+\mu)} > 0,
\end{equation*}
 which proves \eqref{v_numer_two_estimate_k>0k=0}.

\medskip

\textbf{Case (ii) $k = 0$.} 
In this case \eqref{measexst_general_1_lem} reads
\begin{equation}\label{measexst_general_lem}
\sigma(r, y) \leq e^{-\tau(o, y)} r^{n-1} (r+1)^{-C}, \quad \forall r \in (0, i_y), \; y \in S_oM.
\end{equation}

From $F^*({\dd}r)=1$, $\mu>0$, and $n-C-1<0$      along with \eqref{measexst_general_lem}, we deduce by a direct computation that
\begin{align}
\int_{M} F^{*p}({\dd}v_{\iota}) \dm
& = \iota^p \left(1+\frac{\mu}{p}\right)^p  \int_{M} r^{\mu} e^{- \iota p r^{1+\frac{\mu}{p}}} \dm \notag\\
& \leq  \iota^p \left(1+\frac{\mu}{p}\right)^p  \int_{S_oM}  \left( \int^{+ \infty}_0 e^{-\tau(o,y)} e^{- \iota p r^{1+\frac{\mu}{p}}} r^{\mu+n-1} (r+1)^{-C} {\dd}r  \right) {\dd}\nu_o(y) \notag \\
& \leq   \iota^p \left(1+\frac{\mu}{p}\right)^p  \mathscr{I}_{\m}(o)  \left(   \int^{1}_0 r^{\mu} (r+1)^{n- C -1}  {\dd}r
+ \int_1^{+\infty} e^{- \iota p r^{1+\frac{\mu}{p}}} r^{\mu} (r+1)^{n- C -1}  {\dd}r \right) \notag \\
& \leq  \iota^p \left(1+\frac{\mu}{p}\right)^p  \mathscr{I}_{\m}(o)   \left(1+    \int_{1}^{+\infty}  e^{- \iota p r^{1+\frac{\mu}{p}}} r^{\mu + n- C -1}   {\dd}r\right). \label{v_numer_first0_general_lem}
\end{align}
To estimate $\int_{1}^{+\infty}  e^{- \iota p r^{1+\frac{\mu}{p}}} r^{\mu + n- C -1} {\dd}r$, let $z:= \iota p r^{1+\frac{\mu}{p}}$.
Since $\frac{p(\mu + n-C -1) - \mu}{p+\mu} > -1$, we have
\begin{align*}
\int_{1}^{+\infty} e^{- \iota p r^{1+\frac{\mu}{p}}}
r^{\mu + n -C-1}  {\dd}r
 &\leq \iota^{\frac{p(C-n-\mu)}{p+\mu}} \frac{p^{\frac{p(C-n+1-\mu)+\mu}{p+\mu}} }{p+\mu} \int_{0}^{+\infty} e^{- z}
z^{\frac{p(\mu + n-C-1) - \mu}{p+\mu}}  {\dd}z\\
&=\iota^{\frac{p(C-n-\mu)}{p+\mu}} \frac{p^{\frac{p(C-n+1-\mu)+\mu}{p+\mu}} }{p+\mu} \Gamma\left(\frac{p(\mu + n-C-1) - \mu}{p+\mu}+1\right) \\ &=:\iota^{\frac{p(C-n-\mu)}{p+\mu}}\widetilde{\mathcal {C}}_1(n,p,C,\mu).
\end{align*}
 Then it follows from \eqref{v_numer_first0_general_lem}, $C\geq n$, and $\iota\in (0,1)$ that
\begin{align*}
\int_{M} F^{*p}({\dd}v_{\iota}) \dm
& \leq  \iota^{ \frac{p(p+C-n)}{p+\mu}} \left(1+  \widetilde{\mathcal {C}}_1(n,p,C,\mu) \right) \mathscr{I}_{\m}(o)  \left(1+\frac{\mu}{p}\right)^p \notag \\
& = : \iota^{ \frac{p(p+C-n)}{p+\mu}} C_1(k, n, p,C,\mu) \mathscr{I}_{\m}(o),
\end{align*}
which proves \eqref{lem_F*p_k>0=0}.

\smallskip

For \eqref{v_numer_two_estimate_k>0k=0}, together with  \eqref{measexst_general_lem} a direct calculation gives
\begin{align}
  \int_{M}  {|v_\iota|^{a}}{r^{b}} \dm
  & \leq \int_{S_oM}  \left( \int^{+ \infty}_0 e^{-\tau(o,y)}e^{-\iota a r^{1+\frac{\mu}{p}} } r^{b+n-1} (r+1)^{ - C  }   {\dd}r  \right) {\dd}\nu_o(y) \notag \\
 &\leq \mathscr{I}_{\m}(o) \left(  \int^{1}_0 e^{-\iota a r^{1+\frac{\mu}{p}} } r^{b+n-1}{\dd}r + \int_{1}^{+\infty}   e^{-\iota a r^{1+\frac{\mu}{p}} } r^{b+n - C -1} {\dd}r  \right)\notag\\
 &=  \mathscr{I}_{\m}(o) \left(  \frac{1}{b+n} + \int_{1}^{+\infty}   e^{-\iota a r^{1+\frac{\mu}{p}} } r^{b+n - C -1} {\dd}r  \right) .  \label{v_numer_second1_general}
\end{align}
To estimate $\int_{1}^{+\infty}   e^{-\iota a r^{1+\frac{\mu}{p}} } r^{b+n - C -1} {\dd}r$, again we consider two subcases:

\emph{Subcase 2.1: $b \leq C-n+1$.}
In this subcase,
\begin{align*}
\int_{1}^{+\infty}   e^{-\iota a r^{1+\frac{\mu}{p}} } r^{b+n-C-1} {\dd}r
\leq \int_{1}^{+\infty}   e^{-\iota a r} {\dd}r
\leq  \iota^{-1} a^{-1}.
 \end{align*}
Together with \eqref{v_numer_second1_general} and $\iota\in (0,1)$, we obtain
\begin{align*}
  \int_{M}  {|v_\iota|^{a}}{r^{b}} \dm
 \leq \iota^{-1} \left(\frac{1}{b+n} +\frac{1}{a}   \right)  \mathscr{I}_{\m}(o) = : \iota^{-1} C_2(a,b,k,n) \mathscr{I}_{\m}(o).
\end{align*}
We then set $\xi := -1$ for this subcase. Since  $0\leq C-n < \mu < C-n +1$, we have
\[ \frac{p+C-n}{p+\mu} + \frac{\xi}{p'} = \frac{p+C-n}{p+\mu} - \frac{p-1}{p} = \frac{p(C-n+1-\mu) + \mu}{p(p+\mu)} > 0, \]
which proves \eqref{v_numer_two_estimate_k>0k=0}.

\medskip

\emph{Subcase 2.2: $C-n+1 < b < p'(C-n+1)$.}  Since
$\frac{p(b+n-C-1)-\mu}{p+\mu}  > -1$, by letting   $z:= \iota a r^{1+\frac{\mu}{p}}$
we have
\begin{align}
 \int_{1}^{+\infty}   e^{-\iota a r^{1+\frac{\mu}{p}} } r^{b+n-C-1} {\dd}r
& \leq  \iota^{- \frac{p(b+n-C)}{p+\mu}}  a^{- \frac{p(b+n-C)}{p+\mu}} \left(\frac{p}{p+\mu}\right) \int^{+\infty}_0 e^{-z}  z^{ \frac{p(b+n-C-1)-\mu}{p+\mu}}   {\dd}z\notag\\
&= \iota^{- \frac{p(b+n-C)}{p+\mu}}  a^{- \frac{p(b+n-C)}{p+\mu}} \left(\frac{p}{p+\mu}\right) \Gamma\left(\frac{p(b+n-C-1)-\mu}{p+\mu} +1 \right)\notag\\
&=:\iota^{- \frac{p(b+n-C)}{p+\mu}}   \widetilde{\mathcal {C}}_2(a,b, n, p,C,\mu). \label{v_numer_second_deltainfty_general_lem}
\end{align}
Since $\iota\in (0,1)$, the inequalities    \eqref{v_numer_second1_general} and \eqref{v_numer_second_deltainfty_general_lem} then imply
\begin{align*}
  \int_{M}  {|v_\iota|^{a}}{r^{b}} \dm
& \leq  \iota^{- \frac{p(b+n-C)}{p+\mu}} \left(    \frac{1}{b+n}+  \widetilde{\mathcal {C}}_2(a,b, n, p,\mu) \right)  \mathscr{I}_{\m}(o)
=: \iota^{- \frac{p(b+n-C)}{p+\mu}} C_2(a,b,k,n,p,C,\mu).
\end{align*}
In this subcase choose $\xi: = - \frac{p(b+n-C)}{p+\mu}$. Then
\begin{equation*}
\frac{p+C-n}{p+\mu} +\frac{\xi}{p'}  =\frac{p+C-n}{p+\mu} - \frac{p(b+n-C)}{p'(p+\mu)} = \frac{ (p-1)\left( p'(C-n+1)-b \right)}{p+\mu} > 0,
\end{equation*}
which proves \eqref{v_numer_two_estimate_k>0k=0}. This concludes the proof.
\end{proof}


\begin{proof}[Proof of Theorem \ref{voluemcompar22}] In view of \eqref{granotwoff}, we work with $F^*(\mathrm{d}f)$ rather than $F(\nabla f)$ to prove the statements.

\textbf{(\ref{weakHardyinB})}--\textbf{(\ref{geenerCknineq})}
Choose the test functions $v_\iota(x) = -e^{-\iota r^{1 + \frac{\mu}{p}}}$  and set
\[
\begin{cases}
a:=p,\ b:= -p & \text{ for } \eqref{weakHardyinB}, \\
a:=p,\ b:= p's  & \text{ for } \eqref{generunicertpinweakcurva}, \\
a:=p'(m-1),\ b:= p's  & \text{ for } \eqref{geenerCknineq}.
\end{cases}
\]
A direct verification shows that this construction fulfills all requirements of Lemma~\ref{lem_general}.

Arguing as in the proof of Theorem \ref{MainThmBerw}/\eqref{Berw4}, we have
\begin{align*}
\inf_{f \in C^\infty_0(M) \setminus \{0\}} \mathbf{J}(f)   \leq  \mathbf{J} (v_\iota),
\end{align*}
where  $\mathbf{J}$  is  the functional from Definition \ref{function11}, i.e.,
\[
\mathbf{J}:=
\begin{cases}
\mathscr{H}_{o,p} & \text{ for } \eqref{weakHardyinB}, \\
\mathscr{U}_{o,p,s}    & \text{ for } \eqref{generunicertpinweakcurva}, \\
\mathscr{C}_{o,p,m,s} & \text{ for } \eqref{geenerCknineq}.
\end{cases}
\]
Consequently, it suffices to prove that
\[
\lim_{\iota \to 0^+} \mathbf{J}(v_\iota) = 0.
\]
In view of \eqref{lem_F*p_k>0=0} (for \eqref{weakHardyinB}) and \eqref{v_numer_estimate_both} (for \eqref{generunicertpinweakcurva}\eqref{geenerCknineq}), the numerator of $\mathbf{J}(v_\iota)$ vanishes as $\iota\rightarrow 0^+$.
 The proof will therefore be complete once we verify that the denominator of
$\mathbf{J}(v_\iota)$ admits a uniformly positive lower bound independent of $\iota$.

To do this, let $\epsilon\in (0,1)$ be as in Remark \ref{remaep}. Since $\iota\in (0,1)$, we have
\[
|v_\iota|(x)\geq e^{-\iota    \epsilon^{1+\mu/p}}\geq e^{-     \epsilon^{1+\mu/p}}\geq e^{-1}, \qquad \forall x\in B^+_\epsilon(o).
\]
Given $\varrho>0$ and $\varsigma>-n$, the above inequality combined with the lower-bound condition in \eqref{limcaes} implies
\begin{equation}\label{standintegr_Knegative}
\int_M  |v_\iota|^\varrho r^\varsigma \dm \geq e^{-\varrho}\int_{B^+_\epsilon(o)}r^\varsigma \dm\geq \frac{1}{2e^\varrho}\int_{S_oM}\left( \int^\epsilon_0 e^{-\tau(o,y)}r^{n+\varsigma-1} {\dd}r\right) {\dd}\nu_o(y)=\frac{\mathscr{I}_{\m}(o)}{2e^\varrho}\epsilon^{n+\varsigma},
\end{equation}
where
\[
\begin{cases}
\varrho:=p,\ \varsigma:= -p & \text{ for } \eqref{weakHardyinB}, \\
\varrho:=p,\ \varsigma:= s-1  & \text{ for  } \eqref{generunicertpinweakcurva}, \\
\varrho:=m,\ \varsigma:= s-1  & \text{ for } \eqref{geenerCknineq}.
\end{cases}
\]
Consequently, the denominator of
 $\mathbf{J}(v_\iota)$ are bounded below by a positive constant independent of $\iota$, which concludes the proofs of  \eqref{weakHardyinB}--\eqref{geenerCknineq}.

\smallskip

\textbf{(\ref{generCKNin})}  Set $h:=\frac{1}{n-1}\inf_{(x,y)\in TM\backslash\{0\}}\frac{\mathbf{S}(x,y)}{F(x,y)}$. The assumption implies
$\mathbf{S}(\nabla r) \geq (n-1)h$ and $h\in (k,+\infty)$.
By   \eqref{Scurvaturedef} and  \eqref{geommeaingofr} we have
\begin{align*}
\tau(\gamma_y(r),\dot{\gamma}_y(r))-\tau(o,y)=\int^r_0 \mathbf{S}(\gamma_y(t),\dot{\gamma}_y(t)){\dd}t\geq (n-1)hr.
\end{align*}
Together with  Theorem \ref{bascivolurcompar} this yields
\begin{equation}\label{st11ong}
\hat{\sigma}_o(r,y)\leq e^{-\tau(\gamma_y(r),\dot{\gamma}_y (r))}  \mathfrak{s}_{k}^{n-1}(r)\leq e^{-\tau(o,y)-(n-1)hr}\mathfrak{s}_{k}^{n-1}(r),\quad  \forall  r\in (0,i_y), \ y\in S_oM.
\end{equation}

Now we  construct new  test functions
\[
u_\iota(x):=-\left[1+(\iota r(x))^{1+\frac{s}{p-1}}  \right]^{\frac{p-1}{p-m}}, \qquad \iota\in (0,1).
\]
By an argument analogous to the preceding one, we are reduced to showing
\[
\lim_{\iota\rightarrow0^+}\mathscr{C}_{o,p,m,s}(u_\iota)=0.
\]

Note  that the assumption implies
\begin{equation}\label{numerspcon}
\min\{n,m\}-p>0, \quad n+s-1>0, \quad s+p>1, \quad n+p's>p+p's>0.
\end{equation}
Thus,
a similar calculation to \eqref{standintegr_Knegative} combined with \eqref{numerspcon} yields
\begin{align}\label{semaikcn}
\int_{{M}} |u_\iota|^m r^{s-1} {\dm} &\geq 2^{\frac{m(p-1)}{p-m}-1} \mathscr{I}_{\m}(o)\int^\epsilon_0   r^{n+s-2}{\dd}r
= \mathscr{I}_{\m}(o) \frac{2^{\frac{p(m-1)}{p-m}}\epsilon^{n+s-1}}{(n+s-1)}  ,
\end{align}
where $\epsilon\in (0,1)$ is  taken as in Remark \ref{remaep}. Hence, the denominator of
$\mathscr{C}_{o,p,m,s}(u_\iota)$ admits a uniformly positive lower bound independent of $\iota$. It remains to show the numerator approaches to $0$ as $\iota\rightarrow 0^+$.


To achieve this,
since $\mathfrak{s}^{n-1}_{-k^{2}}(r) \sim r^{n-1}$ as $r \to 0^{+}$, there exists $\varepsilon = \varepsilon(n,k) \in (0,1)$ for which
\begin{equation}\label{estkh1}
r^{p's}\mathfrak{s}^{n-1}_{-k^2}(r)\leq 2r^{n+p's-1}, \qquad \forall  r\in (0,\varepsilon).
\end{equation}
Because $\eta := h - k > 0$, a constant $\delta = \delta(n,p,s,k,h) > \varepsilon$ can be selected so that
\begin{equation}\label{estkh2}
e^{-(n-1)hr}\mathfrak{s}_{-k^2}^{n-1}(r)r^{p's}\leq  e^{-\frac{(n-1)\eta}2 r},\qquad \forall  r\in (\delta,+\infty).
\end{equation}
It now follows from \eqref{st11ong}, \eqref{numerspcon}, \eqref{estkh1}, and \eqref{estkh2} that
\begin{align}
 \int_M |u_\iota|^{p'(m-1)} r^{p's}\dm
 \leq \  &  \int_{S_oM}\left( \int^{+\infty}_0  e^{-\tau(o,y)-(n-1)hr} r^{p's}\mathfrak{s}_{-k^2}^{n-1}(r){\dd}r \right){\dd}\nu_o(y)\notag\\
\leq \ & \mathscr{I}_{\m}(o)\left(2 \int^{{\varepsilon}}_0r^{n+p's-1} {\dd}r+\int_{\varepsilon}^{\delta}    e^{-(n-1)hr}r^{p's}\mathfrak{s}_{-k^2}^{n-1}(r){\dd}r+\int_{\delta}^{+\infty}   e^{-\frac{(n-1)\eta}2 r} {\dd}r              \right)\notag\\
=: \ &  \mathscr{I}_{\m}(o)\,\mathcal {C}  (n,p,s,k,h,{\delta},{\varepsilon})<+\infty.\label{cknes1}
\end{align}
Moreover, a direct calculation yields
\begin{align*}
\int_M F^{*p}({\dd}u_\iota)\dm=\iota^{p+p's}\left| \frac{p-1+s}{p-m}  \right|^p \int_M |u_\iota|^{p'(m-1)} r^{p's}\dm,
\end{align*}
which together with \eqref{semaikcn} and \eqref{cknes1}  implies that the numerator of
$\mathscr{C}_{o,p,m,s}(u_\iota)$  tends to $0$ as $\iota\rightarrow 0^+$.
\end{proof}

Theorem \ref{voluemcompar22} clarifies the geometric reasons behind the phenomena on Berwald's metric space described by Theorem \ref{Berwmainth}/\eqref{CKNFIALBE4}\eqref{CKNFIALBE5} and Theorem \ref{sbeer}/\eqref{BerwCKN_i}: the complete failure of the Hardy and generalized uncertainty principles, and the non-universal failure of the generalized CKN inequality. Furthermore, a comparison of the results in \cite{HKZ,MPV,Ka,ZhaoHardy} with Theorem \ref{ThmFunkInequalities} and \eqref{uncertainfalyonB2} shows that, in the absence of finite reversibility, upper bounds on the flag and $S$-curvatures alone are insufficient to recover these inequalities. The residual validity of the CKN inequality in the Berwald case is due to its special $(\alpha,\beta)$-structure, which implies a hidden Riemannian compatibility --- a feature not present in general Finsler metrics.

\begin{remark}\label{S-weakcurvature}
Lemma \ref{lem_general}, and hence Theorem \ref{voluemcompar22}, remain valid under the weaker condition \eqref{weakSball}. We briefly outline the necessary modifications. Set \(U := \{ y \in S_o M : i_y > R \}\). The estimate \eqref{measexst_general_1_lem} must then be replaced by
\[
\hat{\sigma}_o(r, y) \le \mathcal{I}_R(y)\, e^{-(n-1)kr}\, \mathfrak{s}_{-k^2}^{\,n-1}(r)\,(r+1)^{-C}, \quad \text{ for }r \in (R, i_y),\; y \in U,
\]
where \(\mathcal{I}_R(y) := (1+R)^C e^{-\tau(\gamma_y(R),\dot{\gamma}_y(R)) + (n-1)kR}\). The forward completeness implies that \(\mathcal{I}_R\) is continuous on \(S_o M\); hence \(\mathscr{I}_R(o) := \int_{S_o M} \mathcal{I}_R(y) {\dd}\nu_o(y) < +\infty\). Consequently, for any \(f \in L^1(M,\m)\),
\begin{align*}
\int_M |f| \dm&=\int_{B^+_R(o)}|f|\dm +\int_{M\backslash B^+_R(o)}|f|\dm=\int_{B^+_R(o)}|f|\dm +\int_{U}\left(\int_R^{i_y}|f|(r,y)\,\hat{\sigma}_o(r,y){\dd}r\right){\dd}\nu_o(y)\\
&\leq \int_{B^+_R(o)}|f|\dm+ \mathscr{I}_R(o)\int_R^{+\infty}|f|(r,y)e^{-(n-1)kr} \mathfrak{s}^{n-1}_{-k^2}(r)\,(r+1)^{-C}{\dd}r.
\end{align*}
Together with Remark \ref{remaep}, this estimate allows the proof of Lemma \ref{lem_general} to be carried out with only  slight adjustments. We leave the precise formulation of the resulting statements to the interested reader.
\end{remark}

\appendix

\section{Qualitative properties of Berwald's metric}\label{axiberwmetric}

This appendix is devoted to the proof of Lemma \ref{lemmaBerwaldquartic}, following a method inspired by \cite{Masca}.
By  \eqref{defleg} and \eqref{defbasictensor}, the
Legendre transformation of Berwald's metric $\B$ is given by
\[
\mathsf{p}:=\mathfrak{L}(x,y) = g_y(y,\cdot) = \frac{1}{2}\frac{\partial({\B}^2)}{\partial y^i}(x,y) {\dd}x^i, \qquad  \forall y\in T_xM\backslash\{0\}.
\]
 It follows from \eqref{F*F} that
\[
\B(x, y)=\B^*(x, \mathsf{p}), \qquad  \forall y\in T_xM\backslash\{0\}.
\]
Thus, the process of determining the co-metric $\B^*=\B^*(x,\xi)$ is divided into two steps:
\begin{enumerate}[\rm (1)]
 \item Express $\B(x, y)$ in terms of $x$ and $\mathsf{p}$ to obtain $\B^*(x,\mathsf{p})$;
 \item Replace $\mathsf{p}$ with the a general co-vector $\xi$ and obtain  the resulting function $\B^*(x, \xi)$.
\end{enumerate}

\begin{proof}[Proof of Lemma \ref{lemmaBerwaldquartic}]
Recall Berwald's metric $\B=\B(x,y)$ admits an $(\alpha,\beta)$-representation as in \eqref{Berwaldphi}:
\begin{equation*}
\B = \frac{(\alpha+\beta)^2}{\alpha} = \alpha (1+s)^2, \quad s=\frac{\beta}{\alpha},
\end{equation*}
where $\alpha$ is a Riemannian metric and $\beta$ is a 1-form given in \eqref{exparealphbeata}.
For convenience, we introduce the notations:
\[
a_{ij} := \frac{1}{2} \frac{\partial^2 (\alpha^2)}{\partial y^i \partial y^j}, \quad b_i := \frac{\partial \beta}{\partial y^i}, \quad \mathsf{p}_i := \frac{1}{2} \frac{\partial({\B}^2)}{\partial y^i}.
\]
Then
\[
 \alpha^2 = a_{ij} y^i y^j, \quad \beta = b_i y^i, \quad \mathsf{p}=\mathsf{p}_i{\dd}x^i.
 \]
 Moreover, let $(a^{ij})$ denote  the inverse matrix of $(a_{ij})$ and set
 \[
 y_i := a_{ij} y^j, \quad b^i := a^{ij} b_j,\quad \mathsf{p}^i := a^{ij} \mathsf{p}_j.
 \]
 Consequently,
 \[
 \alpha^2 = y_i y^i, \quad \beta= b^i y_i\quad \B^2 = \mathsf{p}_i y^i = \mathsf{p}^i y_i.
 \]
Introduce the quantities
\[
 b: = \sqrt{a_{ij} b^i b^j} = \sqrt{b_i b^i} = |x|, \quad \alpha^* : = \sqrt{a_{ij} \mathsf{p}^i \mathsf{p}^j} = \sqrt{\mathsf{p}_j \mathsf{p}^j}, \quad \beta^{*} : = b^i \mathsf{p}_i=b_i\mathsf{p}^i.
 \]
 That is, $b$ and   $\alpha^*$ are the $\alpha$-norms of $\beta$ and $\mathsf{p}$ respectively, while $\beta^*$ is the $\alpha$-inner product of $\beta$ and $\mathsf{p}$.

 In the sequel, we will establish an equation involving $\B$, $x$, $\alpha^*$ and $\beta^*$, which means that $\B$ can be expressed into a function of $x$ and  $\mathsf{p}$.
 To do this, a direct computation gives
\begin{equation}\label{p_i}
\mathsf{p}_i = \frac{1}{2} \frac{\partial({\B}^2)}{\partial y^i} = \frac{2 (\alpha+ \beta)^3 b_i}{\alpha^2} + \frac{(\alpha-\beta)(\alpha+\beta)^3 y_i}{\alpha^4} =  2 \B \frac{(\alpha+ \beta) b_i}{\alpha} + \frac{(\alpha+\beta)^3 (\alpha-\beta) y_i}{\alpha^4}.
\end{equation}
Contracting \eqref{p_i} with $\mathsf{p}^i$ and $b^i$, respectively, yields
\begin{equation*}
\alpha^{*2} = 2 \B \frac{\alpha+\beta}{\alpha} \beta^{*} + \B^2 \frac{(\alpha+\beta)^3 (\alpha-\beta)}{\alpha^4}, \qquad \beta^{*} = 2 \B \frac{\alpha+\beta}{\alpha} b^2 + \B \frac{ (\alpha+ \beta) (\alpha- \beta) \beta}{\alpha^3}.
\end{equation*}
Substituting $s=\frac{\beta}{\alpha}$ into the above expressions gives
\begin{align*}
\alpha^{*2}   =2  (1+s) \beta^{*}\B +  (1+s)^3 (1-s)\B^2,\qquad
\beta^{*}   = 2  (1+s) b^2\B +   (1+ s)(1- s) s\B.
\end{align*}
Setting $t = 1+s$ transforms these relations into
\begin{align}
\label{alphastar} \B^2 t^4 - 2 \B^2 t^3 - 2 \B  \beta^{*} t + \alpha^{*2} & =0, \\
\label{betastar} \B t^3  -  3 \B t^2 + 2(1- b^2) \B t  + \beta^{*} & =0.
\end{align}
In the following  steps, our goal is to eliminate $t$ from \eqref{alphastar} and \eqref{betastar} to obtain a polynomial relation involving only
$\B$, $\alpha^*$ and $\beta^{*}$.

First, by computing $\eqref{alphastar} - \eqref{betastar} \times \B t$, we have
\begin{equation}\label{alphabeta_1}
\B^2 t^3 - 2  (1 -b^2)\B^2 t^2  - 3 \beta^{*} \B t + \alpha^{*2} =0.
\end{equation}
Next,
$\eqref{alphabeta_1} -\eqref{betastar}\times \B$ gives
\begin{equation}\label{alphabeta_2}
(2 b^2 +1) \B^2 t^2 +  (2 b^2 \B -2 \B -3 \beta^{*})\B t- \beta^{*} \B +\alpha^{*2} =0.
\end{equation}
Then, $\eqref{alphabeta_2}\times t - \eqref{betastar}\times (2 b^2 +1)\B$ yields
\begin{equation}\label{alphabeta_3}
 (8 b^2 \B + \B -3\beta^{*}) \B t^2+ \Big\{ 2 (2 b^4 - b^2  - 1 )\B^2-  \beta^{*} \B +\alpha^{*2}\Big\} t - \beta^{*} (2 b^2 +1) \B =0.
\end{equation}
Furthermore,
$\eqref{alphabeta_2}\times  (8 b^2 \B + \B -3\beta^{*}) - \eqref{alphabeta_3}\times  (2 b^2 +1)\B$ produces
\begin{equation}\label{alphabeta_4}
\begin{split}
& \Big\{ -8 b^2 (1-b^2)^2 \B^2 + 4  (1 -7 b^2)\beta^{*} \B  - (1+ 2 b^2) \alpha^{*2} + 9 \beta^{*2} \Big\} \B t
\\
& - 4 b^2 (1-b^2) \beta^{*} \B^2+(8 b^2 \alpha^{*2}+ \alpha^{*2} + 3 \beta^{*2} ) \B -3 \alpha^{*2} \beta^{*} =0.
\end{split}
\end{equation}
Define \[\Xi:=-8 b^2 (1-b^2)^2 \B^2 + 4  (1 -7 b^2)\beta^{*} \B  - (1+ 2 b^2) \alpha^{*2} + 9 \beta^{*2}.\]
From \eqref{alphabeta_4} we have
 \begin{equation}\label{alphabeta_5}
\begin{split}
 \Xi^2 \B^2 t^2 -
 \Big\{ - 4 b^2 (1-b^2) \beta^{*} \B^2 + (8 b^2 \alpha^{*2}+ \alpha^{*2} + 3 \beta^{*2} ) \B -3 \alpha^{*2} \beta^{*} \Big\}^2 =0.
\end{split}
\end{equation}
Finally, eliminating $t$ and $t^2$  via the linear combination
$\frac{-1}{(1+2|x|^2)^2}\Big\{ \eqref{alphabeta_2}\times \Xi^2 - \eqref{alphabeta_5}\times (2 b^2 +1)  -  \eqref{alphabeta_4} \times (2 b^2 \B -2 \B -3 \beta^{*})\Xi \Big\}$
and substituting $b^2 = |x|^2$ results the desired  quartic equation
\begin{equation*}
\begin{split}
& 16 |x|^2 (1-|x|^2)^2\Big\{ (1-|x|^2) \alpha^{*2} + \beta^{*2} \Big\}\B^{4}
+ 8 \Big\{ ( 10 |x|^2 -1)(1-|x|^2) \alpha^{*2}\beta^{*} + (9 |x|^2 -1)\beta^{*3} \Big\} \B^{3}
\\
&+ \Big\{  (1 -20 |x|^2 -8 |x|^4) \alpha^{*4} +6 (6|x|^2 -5)\alpha^{*2} \beta^{*2}-27 \beta^{*4} \Big\} \B^{2}
+ 12 \alpha^{*4} \beta^{*} \B - \alpha^{*6} =0,
\end{split}
\end{equation*}
which means that $\B$ is a function of $x$ and $\mathsf{p}$, i.e., $\B(x,y)=\B^*(x,\mathsf{p})$.
 Now replacing $\mathsf{p}$ with $\xi$ yields
 \eqref{Berwaldquartic}. This completes the proof.
\end{proof}


\begin{thebibliography}{10}

\bibitem{AB} J.~C.~\'Alvarez Paiva, G.~Berck, \textit{What is wrong with the Hausdorff measure in Finsler spaces}, Adv.~in~Math., \textbf{204}(2) (2006), 647--663.

\bibitem{AT} J.~C.~\'Alvarez Paiva, A.~C.~Thompson, \textit{Volumes on normed and Finsler spaces}, A sampler of Riemann--Finsler geometry, Math.~Sci.~Res.~Inst.~Publ., vol.~\textbf{50}, Cambridge Univ.~Press, Cambridge, 2004, pp.~1--48.

\bibitem{BCS} D.~Bao, S.~S.~Chern, Z.~Shen, \textit{An Introduction to Riemannian--Finsler Geometry}, GTM, vol.~\textbf{200}, Springer--Verlag, 2000.



\bibitem{Be1} L.~Berwald, \textit{Parallel\"ubertragung in allgemeinen R\"aumen}, Atti Congr.~Intern.~Mat.~Bologna, \textbf{4} (1928), 263--270.

\bibitem{Be2} L.~Berwald, \textit{\"Uber die n-dimensionalen Geometrien konstanter Kr\"ummung, in denen die Geraden die k\"urzesten sind}, Math.~Z., \textbf{30} (1929), 449--469.

\bibitem{CFL} C.~Cazacu, J.~Flynn, N.~Lam, \textit{Sharp second order uncertainty principles}, J.~Funct.~Anal., \textbf{283}(10) (2022), 109659, 25~pp.

\bibitem{CFLL} C.~Cazacu, J.~Flynn, N.~Lam, G.~Lu, \textit{Caffarelli--Kohn--Nirenberg identities, inequalities and their stabilities}, J.~Math.~Pures~Appl., \textbf{182}(9) (2024), 253--284.

\bibitem{Ch1} S.~S.~Chern, \textit{Finsler geometry is just Riemannian geometry without the quadratic restriction}, Notices Amer.~Math.~Soc., \textbf{43}(9) (1996), 959--963.

\bibitem{Fai} D.~Faifman, \textit{A Funk perspective on billiards, projective geometry and Mahler volume}, J.~Differential Geom., \textbf{127}(1) (2024), 161--212.

\bibitem{FKV} C.~Farkas, A.~Krist\'aly, C.~Varga, \textit{Singular Poisson equations on Finsler--Hadamard manifolds}, Calc.~Var.~Partial Differential Equations, \textbf{54}(2) (2015), 1219--1241.

\bibitem{Fr} R.~Frank, \textit{Sobolev Inequalities and Uncertainty Principles in Mathematical Physics: Part~1}, Lecture Notes, 2011,  {https://www.mathematik.uni-muenchen.de/$\sim$frank/sobweb1.pdf}.


\bibitem{Funk1} P.~Funk, \textit{\"Uber Geometrien, bei denen die Geraden die K\"urzesten sind}, Math.~Ann., \textbf{101} (1929), 226--237.

\bibitem{Hardy} G.~Hardy, G.~P\'olya, J.~E.~Littlewood, \textit{Inequalities}, 2nd~edn., Cambridge Univ.~Press, Cambridge, 1952.

\bibitem{He} E.~Hebey, \textit{Sobolev spaces on Riemannian manifolds}, Lecture Notes in Math., vol.~1635, Springer, Berlin, 1996.

\bibitem{HKZ} L.~Huang, A.~Krist\'aly, W.~Zhao, \textit{Sharp uncertainty principles on general Finsler manifolds}, Trans.~Amer.~Math.~Soc., \textbf{373} (2020), 8127--8161.

\bibitem{HuangMo} L.~Huang, X.~Mo, \textit{On curvature decreasing property of a class of navigation problems}, Publ.~Math.~Debrecen, \textbf{71}(1-2) (2007), 141--163.

\bibitem{Ka} S.~Kaj\'ant\'o, \textit{Sharp Hardy and spectral gap inequalities on special irreversible Finsler manifolds}, Proc.~Roy.~Soc.~Edinburgh Sect.~A, \textsf{First~View} (2025), 1--36, {https://doi.org/10.1017/prm.2025.10072}.

\bibitem{Ka2}S.~Kaj\'ant\'o, A.~Krist\'aly, \textit{Unexpected Behaviour of Flag and S-Curvatures on the Interpolated Poincar\'e Metric}, J.~Geom.~Anal., \textbf{31} (2021), 10246--10262.


\bibitem{KKPZ} S.~Kaj\'ant\'o, A.~Krist\'aly, I.~R.~Peter, W.~Zhao, \textit{A generic functional inequality and Riccati pairs: an alternative approach to Hardy-type inequalities}, Math.~Ann., \textbf{390} (2024), 3621--3663.


\bibitem{KO} I.~Kombe, M.~\"Ozaydin, \textit{Hardy-Poincar\'e Rellich and uncertainty principle inequalities on Riemannian manifolds}, Trans.~Amer.~Math.~Soc., \textbf{365}(10) (2013), 5035--5050.


\bibitem{Kris2} A.~Krist\'aly, \textit{Sharp uncertainty principles on Riemannian manifolds: the influence of curvature}, J.~Math.~Pures Appl., \textbf{119}(9) (2018), 326--346.

\bibitem{KLZ} A.~Krist\'aly, B.~Li, W.~Zhao, \textit{Failure of famous functional inequalities on Finsler manifolds: the influence of $S$-curvature}, Preprint, 2024, \texttt{arXiv:2409.05497}.

\bibitem{KR16} A.~Krist\'aly, D.~Repov\v{s}, \textit{Quantitative Rellich inequalities on Finsler--Hadamard manifolds}, Commun.~Contemp.~Math., \textbf{18}(6) (2016), 1650020, 14~pp.


\bibitem{KR} A.~Krist\'aly, I.~J.~Rudas, \textit{Elliptic problems on the ball endowed with Funk-type metrics}, Nonlinear Anal., \textbf{199} (2015), 199--208.


\bibitem{Li} B.~Li, \textit{On the classification of projectively flat Finsler metrics with constant flag curvature}, Adv.~in~Math., \textbf{257} (2014), 266--284.

\bibitem{Masca} I.~M.~Masca, V.~S.~Sabau, H.~Shimada, \textit{The $\mathcal{L}$-dual of a Matsumoto space}, Publ.~Math.~Debrecen, \textbf{72}(1-2) (2008), 227--242.

\bibitem{Ma} M.~Matsumoto, \textit{Theory of Finsler spaces with $(\alpha,\beta)$-metric}, Rep.~Math.~Phys., \textbf{31}(1) (1992), 43--84.

\bibitem{MPV} \'A.~Mester, I.~R.~Peter, C.~Varga, \textit{Sufficient criteria for obtaining Hardy inequalities on Finsler manifolds}, Mediterr.~J.~Math., \textbf{18}(2) (2021), Paper~No.~76, 18~pp.

\bibitem{Ng} V.~H.~Nguyen, \textit{Sharp Caffarelli--Kohn--Nirenberg inequalities on Riemannian manifolds: the influence of curvature}, Proc.~Roy.~Soc.~Edinburgh Sect.~A, \textbf{152}(1) (2022), 102--127.

\bibitem{Ohta1} S.-I.~Ohta, \textit{Comparison Finsler geometry}, Springer Monogr.~Math., Springer,  Cham, 2021.

\bibitem{Rade1} H.-B.~Rademacher, \textit{A sphere theorem for non-reversible Finsler metrics}, Math.~Ann., \textbf{328}(3) (2004), 373--387.

\bibitem{Rade} H.-B.~Rademacher, \textit{Non-reversible Finsler metrics of positive flag curvature}, in: \textit{A sampler of Riemann--Finsler geometry}, Math.~Sci.~Res.~Inst.~Publ., vol.~\textbf{50}, Cambridge Univ.~Press, Cambridge, 2004, pp.~261--302.

\bibitem{ShenAdv} Z.~Shen, \textit{Volume comparison and its applications in Riemannian-Finsler geometry},  Adv.~in  Math.,  \textbf{128}(2) (1997), 306--328.



\bibitem{ShenLecture} Z.~Shen, \textit{Lectures on Finsler geometry}, World Sci.~Publ., Singapore, 2001.

\bibitem{ShenSpray} Z.~Shen, \textit{Differential geometry of spray and Finsler spaces}, Kluwer Acad.~Publ., Dordrecht, 2001.

\bibitem{Sh0} Z.~Shen, \textit{Two-dimensional Finsler metrics of constant flag curvature}, Manuscripta Math., \textbf{109}(3) (2002), 349--366.

\bibitem{Sh1} Z.~Shen, \textit{Projectively flat Finsler metrics of constant flag curvature}, Trans.~Amer.~Math.~Soc., \textbf{355}(4) (2003), 1713--1728.


\bibitem{ZhaoHardy} W.~Zhao, \textit{Hardy inequalities with best constants on Finsler metric measure manifolds}, J.~Geom.~Anal., \textbf{31}(2) (2021), 1992--2032.

\bibitem{ZS} W.~Zhao, Y.~Shen, \textit{A universal volume comparison theorem for Finsler manifolds and related results}, Canad.~J.~Math., \textbf{65}(6) (2013), 1401--1435.

\end{thebibliography}
\end{document}